\documentclass[10pt,a4paper]{article}

\usepackage[latin1]{inputenc}
\usepackage{amsfonts}
\usepackage[dvipsnames]{xcolor}
\usepackage{amsmath}
\usepackage{amsthm}
\usepackage{amscd}
\usepackage{amssymb}
\usepackage{nicefrac}
\usepackage{mathrsfs}
\usepackage{bbm}
\usepackage{bm}
\usepackage{stackrel}
\usepackage{dsfont}
\usepackage{url}
\usepackage{bbm}
\usepackage{dsfont}
\usepackage{pdfpages}
\usepackage[margin=2.5cm]{geometry}
\usepackage{subfigure}

\numberwithin{equation}{section}

\usepackage
[pagebackref,colorlinks,citecolor=Plum,urlcolor=Periwinkle,linkcolor=DarkOrchid]
{hyperref}

\usepackage[displaymath, mathlines]{lineno}

\newtheorem{theorem}{Theorem}[section]
\newtheorem{proposition}[theorem]{Proposition}

\newtheorem{lemma}[theorem]{Lemma}
\newtheorem{remark}[theorem]{Remark}

\newcommand{\RR}{\mathbb{R}}

\newcommand{\Nor}{\mathcal{N}}

\newcommand{\bs}{\boldsymbol}


%
%

\def\ba#1\ea{\begin{linenomath}\begin{align*}#1\end{align*}\end{linenomath}}
\def\ban#1\ean{\begin{linenomath}\begin{align}#1\end{align}\end{linenomath}}

\begin{document}

\title{Central limit theorems for the monkey walk\\ with steep memory kernel}
\author{Erion-Stelios Boci\thanks{Department of Mathematical Sciences, University of Bath, Claverton Down, BA2 7AY Bath, UK.\newline Email: e.boci/c.mailler@bath.ac.uk} \and C\'ecile Mailler$^*$}
\date{\today}
\maketitle

\begin{abstract}
The monkey walk is a stochastic process defined as the trajectory of a walker that moves on $\mathbb R^d$ according to a Markovian generator, 
except at some random ``relocation'' times at which it jumps back to its position at a time sampled randomly in its past, according to some ``memory kernel''.
The relocations make the process non-Markovian and introduce a reinforcement effect (the walker is more likely to relocate in a Borel set in which it has spent a lot of time in the past).
In this paper, we focus on ``steep'' memory kernels: in these cases, the time sampled in the past at each relocation time is likely to be quite recent. 
One can see this as a way to model the case when the walker quickly ``forgets'' its past.
We prove limit theorems for the position of the walker at large times, which confirm and generalise the estimates available in the physics literature.
\end{abstract}

\section{Introduction}
The ``monkey walk'' is a non-Markovian stochastic process that 
was first defined in the physics literature as a model for animal (monkeys, in particular) 
foraging behaviour. 
The original model of Boyer and Solis-Salas~\cite{BSS14} is a random walk on $\mathbb Z^d$ 
that evolves like the simple symmetric random walk except at some random ``relocation'' 
times at which it jumps to a site it has already visited in the past, 
chosen with probability proportional to the number of past visits at that site. 
Equivalently, at relocation times, 
the walker chooses a time uniformly 
at random in its past and jumps to the site it visited at that random time.
We call the intervals between relocation times the ``run-lengths''.
In the original model of~\cite{BSS14}, the run-lengths are i.i.d.\ random variables, 
geometrically distributed with some parameter~$q>0$.
Boyer and Solis-Salas~\cite{BSS14} proved a central limit theorem 
for the position of the walker at large time; 
they showed in particular that the variance of this position 
is or order $(\log t)^d$ at large time~$t$.
The monkey walk thus diffuses much slower than the simple symmetric random walk, 
which diffuses at speed~$t^d$.
This is because the random relocations, which make sites that have been visited often in the past more likely to be visited again in the future, introduce a reinforcement (rich-gets-richer) effect.

Boyer, Evans and Majumdar~\cite{BEM17} later 
generalised this model by adding memory, i.e.\ making the walker more likely to relocate 
to sites it visited more recently.
Indeed, in their model, at relocation times, the walker chooses a time in its past according to some possibly non-uniform probability distribution, 
and then jumps to the site where it was at that random time.
More precisely, the idea is that, if a relocation happens at time $t>0$, then
the random time chosen by the walker in its past has density $\mu(x)\mathrm dx/(\int_0^t \mu)$, where $\mu : [0,\infty) \to [0,\infty)$ is a non-negative function called the memory kernel. 
(We ask that $\mu$ satisfies $\int_0^t \mu>0$ for all $t>0$.)
Boyer, Evans and Majumdar~\cite{BEM17} 
showed a central limit theorem for the position of the walker at large times, 
for a large class of possible memory ``kernels''.

Mailler and Uribe-Bravo~\cite{MUB19} later generalised the model even further by allowing the underlying motion (the walk between relocation times) to be any Markov process (instead of the simple symmetric random walk), possibly in continuous time, and the run-lengths to be a sequence of i.i.d. random variables of any distribution (instead of the geometric distribution).
They also allowed the memory to be non-uniform as in~\cite{BEM17} and proved that, for a large class of memory kernels, ``{\it if the underlying Markov process satisfies some central limit theorem, and if the run-lengths distribution have moments of high-enough order, then the associated monkey walk also satisfies a central limit theorem}''.

For technical reasons, the results of~\cite{MUB19}
only hold for relatively ``flat'' memory kernels, i.e.\ only if the walker 
does not forget its past too fast.
More precisely, they consider two classes of memory kernels: 
\[\mu_1(x) = \frac\alpha x (\log x)^{\alpha-1} \mathrm e^{\beta (\log x)^\alpha}, \quad (\alpha>0, \beta\geq 0),\]
and
\[\mu_2(x) = \gamma\delta x^{\delta-1}\mathrm e^{\gamma x^{\delta}}, \quad(\gamma>0, \delta\in [0,\nicefrac12]).\]
The aim of this paper is to prove limit theorems for steeper memory kernels, i.e.\ for $\mu = \mu_2$ with $\delta>\nicefrac12$. We are able to prove precise asymptotic results for the position of the walker at large times, which, in particular, confirm the predictions of~\cite{BEM17} (made in the case when the underlying Markov process is the standard Brownian motion).
In~\cite{MUB19}, the proofs rely on the analysis of the ``genealogical tree of the runs''. 
When the memory kernel is flat enough, this tree is close enough to a random recursive tree. 
When the memory kernel becomes steeper, the tree becomes less and less ``fat''; 
in particular, as mentioned in~\cite{MUB19}, 
the fact that the last common ancestor of two nodes taken uniformly, 
independently at random in the tree has constant-order height is no longer true, 
we believe, when $\delta\geq \nicefrac12$.
Because of this, we are not able to prove convergence of the occupation measure for steep memory kernels, while this could be done in~\cite{MUB19} for flatter memory kernels.

The literature on the ``monkey walk'' extends much beyond the works of Boyer and Solis-Salas~\cite{BSS14}, Boyer, Evans and Majumdar~\cite{BEM17}, and Mailler and Uribe Bravo~\cite{MUB19}, which we have discussed so far. 
Indeed, Boyer and Pineda~\cite{BP16} proved limiting theorems for the position of the walker at large times in the case when the underlying Markov process is a random walk with heavy-tailed increments (a case which falls under the more general, more recent framework of~\cite{MUB19}).
Large deviations for the position of the walker at large times were established in the work of Boci and Mailler~\cite{BM}.
In~\cite{BFGM19}, Boyer, Falc{\'o}n-Cort{\'e}s, Giuggioli and Majumdar exhibit an interesting localisation phenomenon when the probability to relocate is larger at the origin than at the other sites of $\mathbb Z^d$.
Recently, Boyer and Majumdar~\cite{BM24} have introduced a continuous-time variant of the model in which, between relocations, the particle moves at constant speed on a one-dimensional line with a telegraphic noise (i.e.\ the sign of the particle's speed changes at constant rate). They get explicit formulas for the distribution of the position of the particle at all times and show large deviation results.

The case when the walker resets to a fixed position (eg.\ the origin) at relocation times has also been studied in the literature (see, e.g.\ \cite{EMS} for a literature review on the subject), but, unsurprisingly, it leads to a drastically different behaviour.

\subsection{Mathematical definition of the model and notation} 
The monkey walk $X = (X_t)_{t\geq 0}$ is a stochastic process on $\mathbb R^d$ whose distribution depends on three parameters: 
\begin{itemize}
\item a semi-group $P = (P_t)_{t\geq 0}$ on $\mathbb R^d$ (the distribution of the underlying Markov process);
\item a probability distribution $\phi$ on $[0,\infty)$ (the run-length distribution);
\item a function $\mu : [0,\infty) \to [0,\infty)$ such that $\int_0^t \mu >0$ for all $t>0$.
\end{itemize}
The process $X$ is defined as follows (see Figure~\ref{fig:def}): 
first sample $(L_n)_{n\geq 1}$ a sequence of i.i.d. random variables of distribution $\phi$ and let $T_n = \sum_{i=1}^n L_i$ for all $n\geq 0$. 
Then, let $(X(s))_{0\leq s<T_1}$ be the Markov process of semi-group $P$ started at the origin.
Then, for all $n\geq 1$, given $(X(s))_{0\leq s<T_n}$,
\begin{itemize}
\item let $R_n$ be a random variable on $[0, T_n)$ whose distribution has density $\mu/(\int_0^{T_n}\mu)$ on $[0,T_n)$,
\item let $(X(s))_{T_n\leq s<T_{n+1}}$ be the Markov process of semi-group $P$ started at $X(R_n)$.
\end{itemize}
For all $n\geq 1$, we call $L_n$ the ``length of the $n$-th run'' and $T_n$ the ``$n$-th relocation time''. 

\begin{figure}
\begin{center}
\includegraphics[width = 12cm, page= 12]{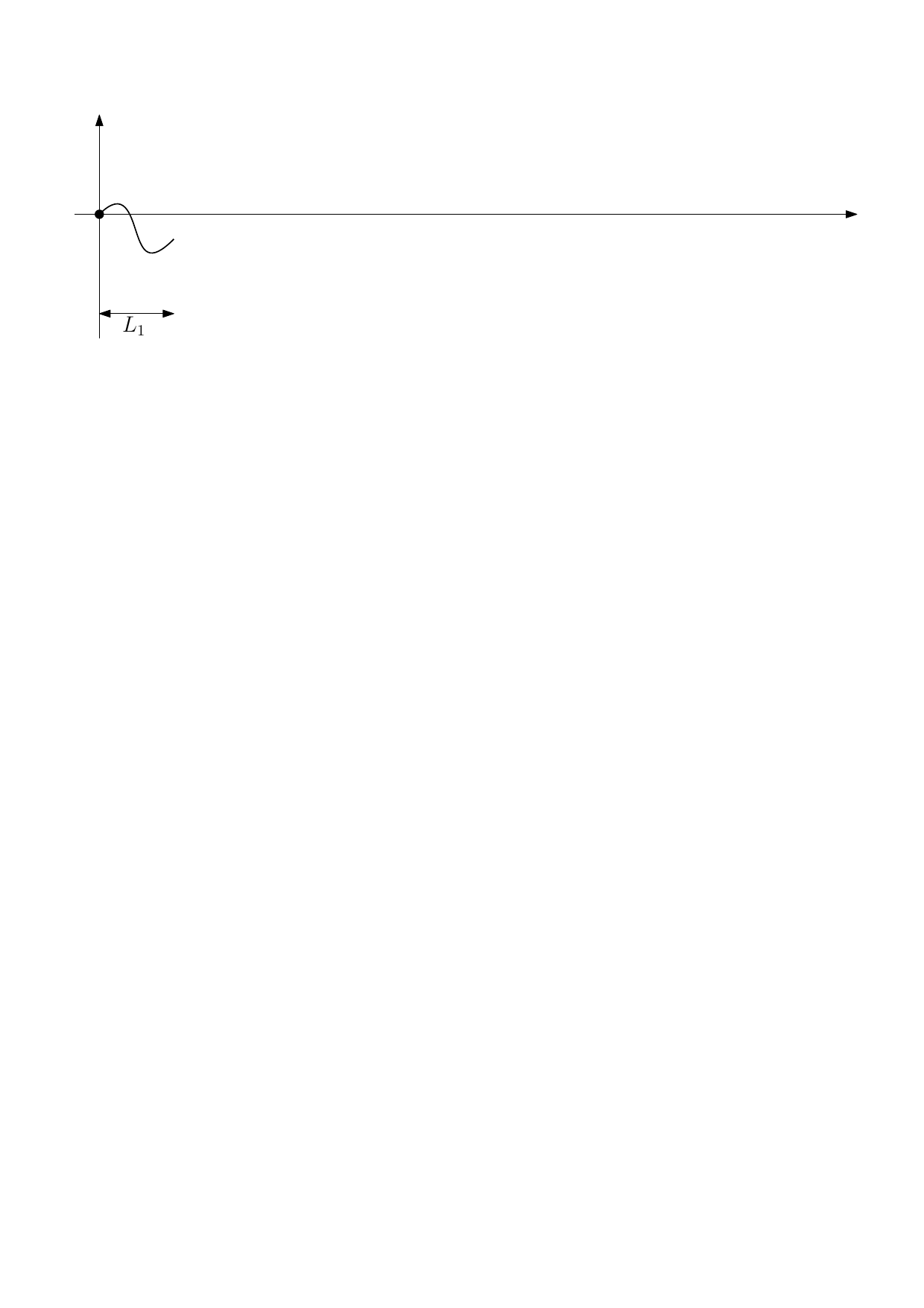}
\end{center}
\caption{A representation of the monkey walk. The grey arrows point from $R_n$ to $T_n$ for all $n\geq 1$; it is convenient to have this ``branching'' representation in minds for the proofs.}
\label{fig:def}
\end{figure}

In this paper, we assume that
\begin{equation}\label{eq:def_kernel}
\mu(x) = \gamma \delta x^{\delta-1}\mathrm e^{\gamma x^\delta}
\end{equation}
for some $\gamma>0$ and $\delta>\nicefrac12$.

\begin{remark} 
Note that this framework also includes the case of a discrete-time underlying process. Indeed, in that case, we make the underlying process defined on $[0,\infty)$ by making it constant on each interval $[n, n+1)$, $n\geq 0$, and we round the relocation times to the integer above.
\end{remark}

\subsection{Statement of the results}
Our main result holds under the following assumptions on the underlying Markov process of semi-group~$P$:
\begin{itemize}
\item[\textbf{(A1)}] There exist two functions $a : [0,\infty) \to \mathbb R^d$ and $b : [0,\infty) \to (0,\infty)$, and a probability distribution~$\nu$ on $\mathbb R^d$ such that, for all $z\in\mathbb R^d$, if $Z = (Z(t))_{t\geq 0}$ is the Markov process of semi-group $P$ started at~$z$, then
\[\frac{Z(t)-a(t)}{b(t)} \Rightarrow \nu,\]
in distribution as $t\uparrow\infty$.
(We say that $Z$ is $(a, b)$-ergodic, following the terminology used in~\cite{MUB19}, and before that in~\cite{MM17}.)
\item[\textbf{(A2)}] For all $x\in \RR$ and for all functions $\varepsilon : [0,\infty)\to \mathbb R$ satisfying $\varepsilon(t) = o(\sqrt t)$ as $t\uparrow\infty$, the following limits exist and are finite: \begin{equation} \label{eq: definition of f and g}
f(x)=\lim_{t\to \infty} \frac{a(t+x\sqrt{t}+\varepsilon(t))-a(t)}{b(t)}
\quad \text{and} \quad 
g(x)=\lim_{t\to \infty} \frac{b(t+x\sqrt{t}+\varepsilon(t))}{b(t)}.
\end{equation}
\item[\textbf{(A2$_{\bs\delta}$)}] 
For all functions $\varepsilon : [0,\infty)\to \mathbb R$ satisfying, as $t\uparrow\infty$, 
\[\varepsilon(t) = 
\begin{cases}
o(t^{\nicefrac32-\delta}) & \text{ if }\delta\in (1, \nicefrac32)\\
\mathcal O(1) & \text{ if }\delta>\nicefrac32
\end{cases},\] we have
\begin{equation} \label{eq: definition of alpha and beta}
0=\lim_{t\to \infty} \frac{a(t+\varepsilon(t))-a(t)}{b(t)}
\quad \text{and} \quad 
1=\lim_{t\to \infty} \frac{b(t+\varepsilon(t))}{b(t)}.
\end{equation}
\end{itemize}

\begin{remark}
The results of~\cite{MUB19} hold under Assumptions {\rm\bf (A1-2)}; 
recall that they cover the case of $\delta \in(0,\nicefrac12]$.
For $\delta\in (\nicefrac12, 1]$, we use the same assumptions.
However, for $\delta>1$, i.e.\ for the steepest memory kernels, 
we need to replace Assumption {\rm\bf (A2)} by {\rm\bf (A2$_{\bs\delta}$)}.
\end{remark}

To state our main result, we introduce the following notation:
For all $t\geq 0$, we set
\begin{equation}\label{eq:def_s}
s(t)
= \begin{cases}
\displaystyle \frac{\gamma\delta t^{\delta}}{\mathbb E [L]} 
\sum_{k=0}^{\lfloor \frac{\delta}{2 - 2\delta}\rfloor} 
\frac{(-\gamma\delta)^{k} \mathbb E[L^{k+2}]}{(k+2)!\big(\delta-(1-\delta)k\big)}
\cdot t^{k(\delta -1)}
& \text{ if }\delta\in(0, 1),\\[5pt]
\displaystyle\bigg(\mathbb E[L] - \frac{1-\mathbb E[\mathrm{e}^{-\gamma L}]}{\gamma}\bigg) t & \text{ if } \delta = 1,\\[5pt]
\displaystyle t-\frac{t^{2-\delta}}{\gamma \delta (2-\delta)} & \text{ if }\delta \in (1, 2],\\[5pt]
t & \text{ if }\delta>2.
\end{cases}
\end{equation}
Roughly speaking, our main result says that, in distribution, $X(t)\approx Z(s(t))$, 
where $Z$ is the Markov process of semi-group $P$ started at the origin.
One can see that, as $\delta$ increases, and thus as the memory kernel becomes steeper and steeper, $s(t)$ becomes larger and larger.

\begin{remark}\label{rk:123}
We comment on the formula for $s(t)$ when $\delta\in(\nicefrac12, 1)$.
For $\delta\in (\nicefrac12, \nicefrac23)$, the sum has only one summand. Indeed, $0\leq \delta/(2\delta -2)< 1$ if and only if $\delta\in (\nicefrac12, \nicefrac23)$. In that case,
\[s(t) = \frac{\gamma \mathbb E[L^2] }{2\mathbb E [L]}\cdot t^{\delta}.\]
Similarly, if $\delta\in [\nicefrac23, \nicefrac34)$, the sum has two summands, and
\[s(t) =  \frac{\gamma \mathbb E[L^2] }{2\mathbb E [L]} \cdot t^{\delta} -
\frac{(\gamma\delta)^2 \mathbb E[L^{3}]}{6\big(2\delta -1\big)\mathbb E [L]}
\cdot t^{2\delta -1}.
\]
More generally, for all integers $i\geq 1$, for all $\delta\in [\frac{i}{i+1}, \frac{i+1}{i+2})$,
the sum has $i$ summands, each of them of the form ``constant times a power of $t$'', where the power of $t$ deacreases with the index $k$ of the sum.
Note in particular that, for all $\delta\in (\nicefrac12, 1)$, as $t\uparrow\infty$,
\[s(t)\sim \frac{\gamma \mathbb E[L^2] }{2\mathbb E [L]}\cdot t^{\delta}.\]
\end{remark}

The following three theorems are our main results:
\begin{theorem}[Small values of $\delta$]\label{th:CLT_small} 
Let $X=(X(t))_{t\geq 0}$ be the monkey process of semigroup~$P$, 
run-length distribution~$\phi$ and memory kernel 
$\mu(x)=\gamma \delta x^{\delta -1}\mathrm{e}^{\gamma x^{\delta}}$, 
where $\gamma>0$ and $\delta\in (0,1)$.
Let $L$ be a random variable of distribution~$\phi$.
We assume that {\rm\bf (A1-2)} hold and assume that $\mathbb E[L^p]<\infty$, where
\begin{equation}\label{eq:def_p}
p=\max\Big\{8, \Big\lfloor \frac{1}{1-\delta}\Big\rfloor +1\Big\}.
\end{equation}
Then, 
in distribution as $t\uparrow\infty$,
\[\frac{X(t)-a(s(t))}{b(s(t))} \Rightarrow f(\Omega) + \Lambda g(\Omega),\]
where $(s(t))_{t\geq 0}$ is defined as in~\eqref{eq:def_s},
and where $\Omega \sim \mathcal N(0, {2\mathbb E[L^3]}/(3\mathbb E[L^2]))$ and $\Lambda\sim \nu$ are two independent random variables.
\end{theorem}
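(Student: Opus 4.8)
The plan is to exploit the ``genealogical tree of the runs'' representation sketched in Figure~\ref{fig:def}: the position $X(t)$ is obtained by starting a fresh copy of the underlying Markov process at the relocation site that is the ancestor (in that tree) of the run containing time $t$, and this ancestor is itself sitting at the end of a chain of runs $0 = U_0 < U_1 < \dots < U_K \le t$ obtained by following the grey arrows backwards from $t$ to the origin. Writing $X(t)$ as a telescoping sum of run-increments along this ancestral line, Assumption \textbf{(A1)} controls the last (longest) increment while \textbf{(A2)} controls how the centering $a$ and scaling $b$ transform under the small perturbations coming from the earlier increments. Concretely, I would first establish that the ancestral line, read from the root, behaves like a renewal-type sequence: the key quantity is $G(t) := \int_0^t \mu$, and the parent $R_n$ of a run relocating at time $T_n$ has law $\mathrm dG/G(T_n)$ on $[0,T_n)$; because $\mu$ is steep ($\delta\in(\nicefrac12,1)$), the distribution of $T_n - R_n$ concentrates, and one extracts from this a law of large numbers plus a central limit theorem for the ``time lost at each relocation''. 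This is where the explicit function $s(t)$ in \eqref{eq:def_s} and the Gaussian parameter $2\mathbb E[L^3]/(3\mathbb E[L^2])$ must emerge: $s(t)$ is (asymptotically) the total amount of underlying-process time accumulated along the ancestral line of the run straddling $t$, and the fluctuation of that accumulated time is of order $\sqrt{s(t)}$ with the stated variance constant.

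The second main step is to reduce $X(t)$ to $Z(s(t))$ up to the prescribed error scale. Following the ancestral line, I would show that the total elapsed ``intrinsic time'' $\Sigma_t$ (the sum of the run lengths actually traversed by the process, as opposed to the wall-clock times $T_n$) satisfies $\Sigma_t = s(t) + \Omega_t\sqrt{t^{\delta}}\cdot(1+o(1))$ with $\Omega_t \Rightarrow \Omega$, where $\Omega\sim\mathcal N(0, 2\mathbb E[L^3]/(3\mathbb E[L^2]))$; the precise bookkeeping here is a Taylor expansion of $G$ near $T_n$, which is exactly what produces the alternating-sum structure $\sum_k \frac{(-\gamma\delta)^k\mathbb E[L^{k+2}]}{(k+2)!(\delta-(1-\delta)k)}t^{k(\delta-1)}$ in $s(t)$ — one needs to expand to order $\lfloor\delta/(2-2\delta)\rfloor$ because higher-order terms are $o(\sqrt{s(t)})$, which is why the moment assumption $\mathbb E[L^p]<\infty$ with $p = \max\{8,\lfloor 1/(1-\delta)\rfloor+1\}$ appears (the $\lfloor 1/(1-\delta)\rfloor+1$ controls the Taylor remainder, the $8$ covers the separate CLT and variance-of-the-variance estimates). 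Given this, \textbf{(A1)} applied at intrinsic time $\Sigma_t$ and the continuity of $f,g$ from \textbf{(A2)} (with $\varepsilon(t)$ absorbing the lower-order deterministic terms in $\Sigma_t - s(t)$, which are indeed $o(\sqrt{s(t)})$ hence admissible in \eqref{eq: definition of f and g}) give
\[
\frac{X(t) - a(s(t))}{b(s(t))}
= \frac{X(t) - a(\Sigma_t)}{b(\Sigma_t)}\cdot\frac{b(\Sigma_t)}{b(s(t))}
+ \frac{a(\Sigma_t) - a(s(t))}{b(s(t))}
\Rightarrow \Lambda\, g(\Omega) + f(\Omega),
\]
with $\Lambda\sim\nu$ independent of $\Omega$ because $\Lambda$ comes from the underlying process run over the final (macroscopic) stretch while $\Omega$ is a functional of the run-lengths and relocation choices along the ancestral line, which are independent of that final stretch's increment.

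The step I expect to be the genuine obstacle is the CLT for the accumulated intrinsic time along the ancestral line — i.e.\ proving $\Omega_t\Rightarrow\Omega$ with the exact variance $2\mathbb E[L^3]/(3\mathbb E[L^2])$. The ancestral line is not a classical renewal process: the number $K$ of relocations back to the root is random and grows like a power of $t$, the increments $T_{n}-R_{n}$ are neither independent nor identically distributed (their law depends on the current time via $G$), and there is a subtle ``size-biasing'' because longer runs are more likely to be hit by a relocation — this is precisely the source of the $\mathbb E[L^3]/\mathbb E[L^2]$ ratio rather than a naive $\mathbb E[L^2]/\mathbb E[L]$. I would handle it by setting up a martingale (Doob-type decomposition of the partial sums of intrinsic-time increments with respect to the filtration generated by the runs), verifying a Lindeberg/Lyapunov condition using the $\mathbb E[L^8]$ moment, and identifying the predictable quadratic variation via the Taylor expansion of $G$; the concentration of $K$ and of the wall-clock times $T_n$ around deterministic curves (again a consequence of steepness) is what lets one replace random normalizations by deterministic ones. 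The remaining pieces — tightness, the independence claim, and checking that all discarded error terms are $o(b(s(t)))$ or $o(\sqrt{s(t)})$ — are routine given the moment hypothesis and Assumptions \textbf{(A1-2)}.
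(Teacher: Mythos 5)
Your overall architecture matches the paper's: write $X(t)\stackrel{d}{=}Z(S(t))$ where $S(t)$ is the accumulated ``intrinsic time'' along the ancestral line of the run containing $t$, prove a CLT for $S(t)$ with centering $s(t)$ and variance of order $t^{\delta}$, then compose with \textbf{(A1)}--\textbf{(A2)} via exactly the two-term decomposition you display. The identification of $s(t)$ as the conditional mean of the accumulated time, of the alternating sum as a Taylor expansion of $\int_0^\cdot\mu$, and of the role of the moment hypothesis are all essentially right.

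However, the step you yourself flag as ``the genuine obstacle'' --- the CLT for the accumulated time along the ancestral line --- is left unresolved, and your proposed resolution (a martingale/Doob decomposition of the \emph{backward} chain $t \to R_{i(t)} \to \cdots \to 0$, with a Lindeberg condition for dependent, non-stationary increments and a random number $K$ of steps) is not how the difficulty is actually overcome, and it is far from clear it can be made to work in that form: the backward increments are strongly dependent, their laws change with the current time, and $K$ is random of polynomial order, so neither a renewal CLT nor a standard martingale CLT applies off the shelf. The missing idea is structural: the genealogy of the runs is a \emph{weighted random recursive tree}, and (Lemma~\ref{lem:magic}) conditionally on the run-lengths $\mathbf{L}$, the ancestry indicators $(\mathbf{1}_{i\prec n})_{1\le i\le n-1}$ are \emph{independent} Bernoulli variables with explicit parameters $W_i/\bar W_i = \big(\int_{T_{i-1}}^{T_i}\mu\big)/\big(\int_0^{T_i}\mu\big)$. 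This converts $\Phi(n)=\sum_{i=1}^{n}F_i\mathbf{1}_{i\prec n}$ into a sum of conditionally independent terms indexed \emph{forward} in $i$, to which Lindeberg's theorem applies directly; the mean $\sigma(n)=\sum_i \mathbb E_{\mathbf L}[F_i]W_i/\bar W_i$ and variance $\sum_i \mathbb E_{\mathbf L}[F_i^2](W_i/\bar W_i)(1-W_i/\bar W_i)\sim \tfrac{\gamma\mathbb E[L^3]}{3\mathbb E[L]^{1-\delta}}n^{\delta}$ are then computed by explicit integration by parts (this is where $\mathbb E[L^3]/3$ enters, via $\int_0^L 2(L-u)u\,\mathrm du=L^3/3$, rather than through a size-biasing heuristic), with almost-sure error control at the scale $o(n^{\delta/2})$ obtained from a Petrov-type strong law. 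Without this independence structure your sketch does not close; with it, the rest of your outline goes through as you describe.
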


\begin{remark} 
We have included the case $\delta\in (0, \nicefrac12]$ in Theorem~\ref{th:CLT_small}.
This case is proved in~\cite{MUB19}. In this paper, we only prove Theorem~\ref{th:CLT_small} in the case $\delta\in(\nicefrac12, 1)$.
Although the general idea of the proof is the same as in~\cite{MUB19}, the proof is more technical because the expansion of $s(t)$ up to order $t^{\nicefrac\delta2}$ (which is the accuracy we need) has more and more terms as $\delta\uparrow1$.
\end{remark}

\begin{theorem}[$\delta = 1$]\label{th:CLT_crit} 
Let $X=(X(t))_{t\geq 0}$ be the monkey process of semigroup~$P$, 
run-length distribution~$\phi$ and memory kernel 
$\mu(x)=\gamma \mathrm{e}^{\gamma x}$, 
where $\gamma>0$.
Let $L$ be a random variable of distribution~$\phi$.
We assume that {\rm\bf (A1-2)} hold and assume that $\mathbb E[L^4]<\infty$.
Then, in distribution when $t\uparrow\infty$,
\[\frac{X(t)-a(s(t))}{b(s(t))} \Rightarrow f(\Omega) + \Lambda g(\Omega),\]
where $s(t)$ is defined as in~\eqref{eq:def_s},
$\Lambda\sim\nu$ and $\Omega$ are independent, 
and $\Omega = \Omega_1 + \Omega_2 + \Omega_3$ is the sum of three dependent Gaussian random variables whose variances and covariances are as follow:
\[\mathrm{Var}(\Omega_1) 
= \frac1{\gamma^2}+\mathbb E\bigg[L^2-\bigg(L+\frac{\mathrm e^{-\gamma L}}{\gamma}\bigg)^{\!\!2}\bigg],
\quad\mathrm{Var}(\Omega_2) 
= \mathrm{Var}\bigg(L- \frac{1-\mathrm e^{-\gamma L}}\gamma\bigg)
= \mathrm{Var}\bigg(L + \frac{\mathrm e^{-\gamma L}}\gamma\bigg),\]
\[\mathrm{Var}(\Omega_3) 
= \frac{\mathrm{Var}(L)\mathbb E[L-(1-\mathrm e^{-\gamma L})/\gamma]^2}{\mathbb E[L^2]},\]
\[\mathrm{Cov}(\Omega_1, \Omega_2) 
= \mathrm{Var}\bigg(L-\frac{1-\mathrm e^{-\gamma L}}{\gamma}\bigg),
\mathrm{Cov}(\Omega_1, \Omega_3) 
= \mathrm{Cov}(\Omega_2, \Omega_3)
= \frac{\mathrm{Cov}(L, L+\mathrm e^{-\gamma L})\mathbb E[L-(1-\mathrm e^{-\gamma L})/\gamma]}{\gamma\mathbb E[L]}.
\]
\end{theorem}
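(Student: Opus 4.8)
\emph{Strategy.} The plan is to follow the three-step scheme used for Theorem~\ref{th:CLT_small} and in~\cite{MUB19}: reduce $X(t)$ to the underlying Markov process run for a suitable \emph{random} amount of time built from the genealogy of the runs; prove a central limit theorem for that random time around $s(t)$; and transfer the fluctuations through the Markov limit theorem \textbf{(A1)} using the regularity of $a$ and $b$ encoded in \textbf{(A2)}. For the first step, consider the genealogy of the runs suggested by Figure~\ref{fig:def}: run~$1$ is the root, and the parent of run $n+1$ is the run containing the relocation time $R_n$. Fix $t$, let $N(t)$ be the index of the run containing $t$, and let $1=\mathcal A_K\to\cdots\to\mathcal A_0=N(t)$ be the ancestral path of $N(t)$. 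Since the relocation times are drawn from $\mu$ \emph{independently of the trajectories}, the whole genealogy and the relocation points on the spine are independent of the motion, so unwinding the definition of $X$ along the ancestral path and using the Markov property yields the identity in law
\[X(t)\;\stackrel{d}{=}\;Z\big(\Sigma(t)\big),\qquad \Sigma(t)=\sum_{j=0}^{K}d_j=t-\sum_{j=1}^{K}E_j,\]
where $Z$ is the semi-group-$P$ process from the origin (independent of $\Sigma(t)$), $d_0=t-T_{N(t)-1}$, $d_j$ (for $j\ge1$) is the portion of run $\mathcal A_j$ used by the spine (from its start up to the relocation point that begat run $\mathcal A_{j-1}$), and $E_j$ is the jump-back amount at the $j$-th spinal relocation, i.e.\ the length of the gap separating the used portions of runs $\mathcal A_{j-1}$ and $\mathcal A_j$. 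It therefore suffices to describe the law of $\Sigma(t)$ for large $t$ and feed it into $(Z(u)-a(u))/b(u)\Rightarrow\nu$.

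\emph{A joint CLT for the effective time.} When $\delta=1$, $\mu(x)=\gamma\mathrm e^{\gamma x}$, and at a relocation occurring at large time the jump-back $E$ converges in law to $\Exp(\gamma)$. The increments of the spine are then backward-recurrence times, at $\Exp(\gamma)$-perturbed renewal epochs, of the renewal process generated by $(L_n)$, and one can describe $K$ as (roughly) a renewal count in which the runs are consumed in independent blocks whose sizes depend on the jump-backs. A multivariate renewal-type central limit theorem then gives
\[\frac{\Sigma(t)-s(t)}{\sqrt{s(t)}}\;\Longrightarrow\;\Omega=\Omega_1+\Omega_2+\Omega_3,\]
with $s(t)$ as in~\eqref{eq:def_s} and $(\Omega_1,\Omega_2,\Omega_3)$ the centred Gaussian vector of the statement. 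Heuristically, $\Omega_1$ collects the conditional fluctuations of the jump-backs given the run-lengths and the runs they land in, which is why $\mathrm{Var}(\Omega_1)=\mathbb E\big[\mathrm{Var}(E\wedge L\mid L)\big]$ with $E\sim\Exp(\gamma)$ (using $\mathbb E[E\wedge L\mid L]=(1-\mathrm e^{-\gamma L})/\gamma$); $\Omega_2$ collects the fluctuations of the run-length sequence through the renewal theorem, the relevant per-run quantity being $L-\mathbb E[E\wedge L\mid L]=L-(1-\mathrm e^{-\gamma L})/\gamma$; and $\Omega_3$ is the boundary term produced by the random index $N(t)$ and the position of $t$ inside its run, which also generates the cross-covariances with $\Omega_1$ and $\Omega_2$ through $\mathbb E[L-(1-\mathrm e^{-\gamma L})/\gamma]$. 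The hypothesis $\mathbb E[L^4]<\infty$ is precisely what is needed to run this CLT — the increments carry backward-recurrence-type terms whose variances already involve $\mathbb E[L^3]$, and the Lindeberg / uniform-integrability bounds cost one more moment — and makes all the displayed variances and covariances finite.

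\emph{Transfer and main obstacle.} With $u(t):=s(t)$ one has $\Sigma(t)=u(t)+\Omega_t\sqrt{u(t)}+o_{\PP}(\sqrt{u(t)})$, $\Omega_t\Rightarrow\Omega$, and since $\sqrt{u(t)}$ is of order $\sqrt t$ the $o_{\PP}(\sqrt t)$ remainder is an admissible perturbation in the sense of~\eqref{eq: definition of f and g}; moreover, conditionally on $\Sigma(t)$, $(Z(\Sigma(t))-a(\Sigma(t)))/b(\Sigma(t))\Rightarrow\nu$ because $Z$ is independent of $\Sigma(t)$ and $\Sigma(t)\to\infty$. Writing
\[\frac{X(t)-a(s(t))}{b(s(t))}\;\stackrel{d}{=}\;\frac{a(\Sigma(t))-a(s(t))}{b(s(t))}+\frac{b(\Sigma(t))}{b(s(t))}\cdot\frac{Z(\Sigma(t))-a(\Sigma(t))}{b(\Sigma(t))}\]
and invoking a Skorokhod coupling to accommodate the randomness of the ``$x$-coordinate'' $\Omega_t$ in the definitions of $f$ and $g$, the first term converges to $f(\Omega)$, the ratio $b(\Sigma(t))/b(s(t))$ to $g(\Omega)$, and the last factor to an independent $\Lambda\sim\nu$, giving $(X(t)-a(s(t)))/b(s(t))\Rightarrow f(\Omega)+\Lambda\,g(\Omega)$. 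The hard part is the joint CLT of the previous paragraph: obtaining the convergence of $(\Omega_1,\Omega_2,\Omega_3)$ with exactly the claimed covariance matrix. The subtlety is that $K$ is random and strongly correlated with the individual increments (the spine consumes runs in blocks whose sizes are governed by the $\Exp(\gamma)$ jump-backs), that the increment landing in the run containing $t$ is a genuine boundary effect which must not be absorbed into the bulk sum, and that one must quantify the replacement of the exact (time-dependent) jump-back law by $\Exp(\gamma)$ so that all remainders are truly $o_{\PP}(\sqrt t)$; getting this bookkeeping right is exactly what produces the somewhat intricate covariance structure of $(\Omega_1,\Omega_2,\Omega_3)$, the rest of the argument being the same transfer mechanism as for $\delta\in(0,1)$, only cleaner.
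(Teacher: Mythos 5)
Your overall architecture is the right one and matches the paper's: the time-change identity $X(t)\stackrel{d}{=}Z(S(t))$ (your $\Sigma(t)$ is exactly the $S(t)$ of~\eqref{eq:def_S}, written as $t$ minus the skipped gaps), then a CLT for the effective clock around $s(t)$, then the transfer through \textbf{(A1)}--\textbf{(A2)} via a Skorokhod coupling. The transfer step as you write it is fine, and your heuristic identifications are even quantitatively consistent with the statement (e.g.\ $\mathbb E[\mathrm{Var}(E\wedge L\mid L)]$ with $E\sim\Exp(\gamma)$ does equal $\tfrac1{\gamma^2}+\mathbb E[L^2-(L+\mathrm e^{-\gamma L}/\gamma)^2]$). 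But the entire substance of the theorem is the joint CLT for the effective clock with precisely the covariance matrix of $(\Omega_1,\Omega_2,\Omega_3)$, and at that point you only invoke ``a multivariate renewal-type central limit theorem'' and then yourself concede that the random number $K$ of spinal steps, its correlation with the gaps and the run-lengths, the boundary run, and the replacement of the exact jump-back law by $\Exp(\gamma)$ are the hard part. None of that is carried out, so the proof has a genuine gap exactly where the work is.

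What closes the gap in the paper is the independence structure you never establish: by Lemma~\ref{lem:magic}, conditionally on $\mathbf L$ the indicators $({\bf 1}_{i\prec n})_i$ are \emph{independent} Bernoulli variables of parameters $W_i/\bar W_i$, which for $\delta=1$ equal $1-\mathrm e^{-\gamma L_i}$, so that $\Phi(n)=\sum_i F_i{\bf 1}_{i\prec n}$ is a sum of conditionally independent terms whose conditional laws depend only on $L_i$ (indeed $F_i{\bf 1}_{i\prec n}\stackrel{d}{=}L_i-(E_i\wedge L_i)$ with $E_i\sim\Exp(\gamma)$). This yields the exact closed forms $\mathbb E_{\bf L}[\Phi(n)]=\sum_i\big(L_i-\tfrac{1-\mathrm e^{-\gamma L_i}}{\gamma}\big)$ and $\mathrm{Var}_{\bf L}(\Phi(n))=\tfrac n{\gamma^2}+\sum_i\big(L_i^2-(L_i+\tfrac{\mathrm e^{-\gamma L_i}}{\gamma})^2\big)$; a Lindeberg argument conditionally on $\mathbf L$ (this is where $\mathbb E[L^4]<\infty$ is used) produces $\Omega_1$, the i.i.d.\ CLT for $\mathbb E_{\bf L}[\Phi(n)]$ produces $\Omega_2$, the renewal CLT for $i(t)$ (via $T_{i(t)}=t+\mathcal O(1)$) produces $\Omega_3$, and the covariances follow from the tower rule, e.g.\ $\mathrm{Cov}(\Phi(n),\mathbb E_{\bf L}[\Phi(n)])=\mathrm{Var}(\mathbb E_{\bf L}[\Phi(n)])$. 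Your spinal/renewal-block picture could in principle be made rigorous, but as written it replaces this decomposition by an unproved claim, and handling the dependence between $K$, the gaps and $(L_i)$ without the conditional-independence lemma is precisely what you have not done; so you should either prove your multivariate renewal CLT in detail or switch to the conditional route above.
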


\begin{theorem}[Large values $\delta$]\label{th:CLT_large} 
Let $X=(X(t))_{t\geq 0}$ be the monkey process of semigroup~$P$, 
run-length distribution~$\phi$ and memory kernel 
$\mu(x)=\gamma\delta x^{\delta-1} \mathrm{e}^{\gamma x^\delta}$, 
where $\gamma>0$ and $\delta>1$.
Let $L$ be a random variable of distribution~$\phi$.
We assume that {\rm\bf (A1)} and {\rm\bf (A2$_{\bs\delta}$)} hold and assume that $\mathbb E[L^4]<\infty$.
Then, 
in distribution as $t\uparrow\infty$,
\[\frac{X(t)-a(s(t))}{b(s(t))} \Rightarrow \nu.\]
\end{theorem}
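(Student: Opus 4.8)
The plan is to use the ``branching'' representation of Figure~\ref{fig:def}, in the spirit of~\cite{MUB19}, and reduce the statement to a law of large numbers for the length of a single ancestral line. Fix $t>0$, let $N(t)$ be the index of the run active at time~$t$ (so that $t\in[T_{N(t)-1},T_{N(t)})$), and let $\mathcal E$ be the $\sigma$-algebra generated by the run-lengths $(L_n)_{n\ge1}$ and the relocation variables $(R_n)_{n\ge1}$. Since the law of $R_n$ depends only on~$T_n$, this ``environment'' $\mathcal E$ is independent of the i.i.d.\ family $(Z^{(n)})_{n\ge1}$ of Markov processes of semi-group~$P$ used to run the successive runs. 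Following the grey arrows backwards from time~$t$ defines the \emph{ancestral line of time~$t$}: a strictly decreasing sequence $N(t)=a_0>a_1>\dots>a_m=1$, where $a_{i+1}$ is the index of the run containing the relocation time $R_{a_i-1}$. Writing $U_n:=T_n-R_n\ge0$ for the ``backward jump'' made at the relocation $R_n$, the position $X(t)$ is obtained by concatenating increments of $Z^{(a_0)},\dots,Z^{(a_m)}$ (which are independent because the indices are pairwise distinct) over time-intervals of total length
\[
\sigma(t)\;:=\;\bigl(t-T_{N(t)-1}\bigr)+\sum_{i=1}^{m}\bigl(R_{a_{i-1}-1}-T_{a_i-1}\bigr)\;=\;t-\sum_{i=0}^{m-1}U_{a_i-1}.
\]
By the Chapman--Kolmogorov equations, conditionally on $\mathcal E$ the variable $X(t)$ has the law of $Z(\sigma(t))$, where $Z$ is a Markov process of semi-group~$P$ started at the origin and independent of $\mathcal E$. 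Since $\sigma(t)=t(1+o_{\mathbb P}(1))\to\infty$ and $Z$ is $(a,b)$-ergodic by~{\bf (A1)}, it suffices to prove that, in probability as $t\uparrow\infty$,
\[
\frac{a(\sigma(t))-a(s(t))}{b(s(t))}\longrightarrow 0
\qquad\text{and}\qquad
\frac{b(\sigma(t))}{b(s(t))}\longrightarrow 1 ;
\]
conditioning on $\mathcal E$, applying~{\bf (A1)} to~$Z$, and using dominated convergence then give $\frac{X(t)-a(s(t))}{b(s(t))}\Rightarrow\nu$.

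Everything thus reduces to the asymptotics of $\sigma(t)=t-\sum_i U_{a_i-1}$. A direct computation (substituting $v=T_n-u$ and integrating by parts) gives, for the conditional law of~$U_n$ given~$T_n$,
\[
\mathbb E[U_n\mid T_n]=\frac{\int_0^{T_n}(\mathrm e^{\gamma v^\delta}-1)\,\mathrm dv}{\mathrm e^{\gamma T_n^\delta}-1}=\frac{1+o(1)}{\gamma\delta\,T_n^{\delta-1}},
\]
the Laplace-type asymptotics following by expanding $\gamma v^\delta$ near $v=T_n$; since $\delta>1$, the jumps $U_n$ are of order $T_n^{1-\delta}\to0$, so the ancestral line rarely ``skips'' a run (the index-gap is $a_i-a_{i+1}=1+\mathcal K_i$, where $\mathcal K_i$ counts the renewals of an independent $\phi$-renewal process in $[0,U_{a_i-1})$, and concentrates near~$1$). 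The first main estimate is then that $\mathbb E[\sigma(t)]$ equals $s(t)$ up to an error that is $o(t^{3/2-\delta})$ when $\delta\in(1,\nicefrac32)$ and $\mathcal O(1)$ when $\delta>\nicefrac32$: one obtains it by summing $\mathbb E[U_{a_i-1}\mid T_{a_i-1}]$ along the ancestral line via renewal theory — replacing the sum by $\tfrac1{\mathbb E[L]}\int_0^t(\cdot)\,\mathrm ds$ up to controlled errors, and carefully accounting for the skips — and keeping precisely the correction terms that, after integration, reassemble into~\eqref{eq:def_s} (for $\delta>2$ the relevant integral converges, whence $s(t)=t$; for $\delta\in(1,2)$ it produces the displayed negative power of~$t$).

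One then controls the fluctuations: conditionally on $(T_n)_n$ the $(U_n)_n$ are independent, so $\operatorname{Var}\!\bigl(\sum_i U_{a_i-1}\bigr)$ is governed by $\sum_n\operatorname{Var}(U_n\mid T_n)\asymp\sum_nT_n^{2-2\delta}$, which is $\mathcal O(t^{3-2\delta})$ for $\delta\in(1,\nicefrac32)$ and $\mathcal O(1)$ for $\delta>\nicefrac32$; the fluctuations of $N(\cdot)$ and of the ancestral structure are of the same or smaller order, and the assumption $\mathbb E[L^4]<\infty$ is precisely what makes all these moment estimates (and the control of the skips) available. Hence $\sigma(t)-s(t)$ is $\mathcal O_{\mathbb P}(t^{3/2-\delta})$ when $\delta\in(1,\nicefrac32)$ and $\mathcal O_{\mathbb P}(1)$ when $\delta>\nicefrac32$, and $\sigma(t)/t\to1$; on the corresponding high-probability events $\{|\sigma(t)-s(t)|\le K t^{3/2-\delta}\}$, resp.\ $\{|\sigma(t)-s(t)|\le K\}$, Assumption~{\bf (A2$_{\bs\delta}$)} applies and yields the two displayed limits, which completes the argument.

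The main obstacle is the interplay of these last two steps. For $\delta\in(1,\nicefrac32)$ the random error $\sigma(t)-s(t)$ is at the critical size $t^{3/2-\delta}$ tolerated by~{\bf (A2$_{\bs\delta}$)}, so $\mathbb E[\sigma(t)]$ must be expanded to matching precision — which for $\delta\in(1,2)$ forces one to keep several correction terms and is the origin of the closed form~\eqref{eq:def_s} — while simultaneously the rare skips and all renewal-theoretic remainders must be estimated uniformly down to that order; this is where the hypothesis $\mathbb E[L^4]<\infty$ is really used. By contrast the range $\delta>2$, where $s(t)=t$ and $\sigma(t)=t+\mathcal O_{\mathbb P}(1)$, is comparatively soft.
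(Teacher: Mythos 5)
Your overall strategy is the paper's: represent $X(t)$ in law as $Z$ run for the (random) duration of the ancestral line, show that this duration concentrates around $s(t)$ to the accuracy permitted by {\rm\bf (A2$_{\bs\delta}$)}, and conclude with {\rm\bf (A1)}; your $\sigma(t)=t-\sum_i U_{a_i-1}$ is pathwise the same quantity as the paper's $S(t)$ of~\eqref{eq:def_S}, and $\mathbb E[U_n\mid T_n]\sim 1/(\gamma\delta T_n^{\delta-1})$ is the right analogue of~\eqref{eq:WEF/S_lowerbound}. The genuine gap is in the two steps where you evaluate the sum along the line. First, the line is built from the very jumps you are summing (whether run $j$ is visited is a function of the $U_m$ with $m\ge j$ and of ${\bf L}$), so ``conditionally on $(T_n)$ the $U_n$ are independent, hence $\operatorname{Var}(\sum_i U_{a_i-1})$ is governed by $\sum_n\operatorname{Var}(U_n\mid T_n)$'' is not a valid argument: the random index set is correlated with the summands. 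In fact the conditional-on-${\bf L}$ fluctuation of $\sigma(t)$ is $\mathcal O(1)$ for every $\delta>1$ (Lemma~\ref{lem:large_lem_CLT}, a martingale argument using $\mathbb E[L^4]<\infty$ through Lemma~\ref{lem:prel_large}), not of order $t^{\nicefrac32-\delta}$; the order-$t^{\nicefrac32-\delta}$ error you anticipate comes from the renewal fluctuations of the ${\bf L}$-measurable centering $\sum_{i<i(t)}1/(\gamma\delta T_i^{\delta-1})$ and of $i(t)$, which your one-line remark ``the fluctuations of $N(\cdot)$ and of the ancestral structure are of the same or smaller order'' simply asserts. The device that makes all of this tractable --- and which your plan never invokes --- is Lemma~\ref{lem:magic}: conditionally on ${\bf L}$ the ancestor indicators are independent Bernoulli variables of parameters $W_i/\bar W_i$, independent of the within-run jump positions $F_i$, so each run contributes the bounded variable $F_i{\bf 1}_{i\prec i(t)}\in[0,L_i]$ and both the conditional mean (Lemma~\ref{lem:large_lem_CLT2}, including the control of $\sum_i\mathrm e^{-\gamma\delta L_iT_i^{\delta-1}}T_i^{1-\delta}$ by a martingale strong law, which is where the case $\delta\in(1,\nicefrac32]$ is genuinely delicate) and the fluctuations become computable. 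Your substitute bookkeeping of the ``skips'' via $\mathcal K_i$ counted by an ``independent'' $\phi$-renewal process is exactly the entanglement this lemma is designed to bypass, and the independence claim is not literally correct (the backward gaps are the same $L_j$'s that build the $T_i$'s used elsewhere along the line); nothing in the outline replaces it.

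Second, even granting the estimates, your conclusion for $\delta\in(1,\nicefrac32)$ is only $\sigma(t)-s(t)=\mathcal O_{\mathbb P}(t^{\nicefrac32-\delta})$, and you describe $t^{\nicefrac32-\delta}$ as ``the critical size tolerated by {\rm\bf (A2$_{\bs\delta}$)}''; but the assumption requires $\varepsilon(t)=o(t^{\nicefrac32-\delta})$, so a big-$\mathcal O$ bound at exactly that order does not allow the final step --- this is precisely why the paper expands the conditional mean until the remaining error is a little-$o$ (Proposition~\ref{Phi CLT delta > 1}) rather than stopping at a variance bound. Relatedly, your renewal replacement $\frac1{\mathbb E[L]}\int_0^t$ produces the centering $t-\frac{t^{2-\delta}}{\gamma\delta(2-\delta)\mathbb E[L]}$, which you claim ``reassembles into''~\eqref{eq:def_s} without checking; identifying the exact deterministic centering to accuracy $o(t^{\nicefrac32-\delta})$ is the whole content of the theorem for $\delta\in(1,2)$, so this cannot be waved through. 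Finally, $\mathbb E[L^4]<\infty$ is invoked only rhetorically in your outline, whereas in the paper it enters concretely in the summability of the conditional variances. In short: same route as the paper, but the two central technical steps (the decoupling of the ancestral line from the jumps, and the little-$o$ control of the error at the {\rm\bf (A2$_{\bs\delta}$)} threshold) are missing.
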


\begin{remark}[On the moment conditions]
We summarise here our moment conditions on the run-length distribution, for increasing values of $\delta$:
\begin{itemize}
\item for $\delta<\nicefrac78$, we ask that $\mathbb E[L^8]<\infty$;
\item for $\delta\in [\nicefrac78, \nicefrac89)$ we ask that $\mathbb E[L^9]<\infty$; etc
\item for $\delta \geq 1$, we ask that $\mathbb E[L^4]<\infty$.
\end{itemize}
Interestingly, one can note that we ask for more and more control on the upper tail of the run-length distribution as $\delta\uparrow1$. We believe that this is a caveat of the expansion in the expression for $s(t)$; if the run-lengths had heavier tails, a limiting result would hold, but there would be no ``nice'' formula for $s(t)$.
\end{remark}

\subsection{One example}
Before discussing our main result further, we apply it to a simple example:
let $\phi$ be the standard exponential distribution and $P$ be the semi-group of the one dimensional Brownian motion with constant drift equal to~1.
We first check Assumptions {\bf (A1-2)} and {\bf (A2$_{\bs\delta}$)}:
If $Z$ is the one dimensional Brownian motion with constant drift equal to~1, 
then, in distribution as $t\uparrow\infty$,
\[\frac{Z(t)-t}{\sqrt t} \Rightarrow \mathcal N(0,1).\]
In other words, Assumption {\bf (A1)} holds with $a : t \mapsto t$, $b : t\mapsto \sqrt t$, and $\nu = \mathcal N(0,1)$.
Also, for all $x\in\mathbb R$ and $t\geq 0$,
\[\frac{a(t+ x\sqrt t + \varepsilon(t))- a(t)}{b(t)}
= \frac{x\sqrt t + \varepsilon(t)}{\sqrt t} \to x,\]
as $t\uparrow\infty$, as long as $\varepsilon(t) = o(\sqrt t)$.
Similarly,
\[\frac{b(t+ x\sqrt t + \varepsilon(t))}{b(t)} = \sqrt{\frac{t+x\sqrt t + \varepsilon(t)}{t}} \to 1.\]
Thus, Assumption {\bf (A2)} holds with $f(x) = x$ and $g(x) = 1$, for all $x\in\mathbb R$.
Finally, if $\varepsilon(t) = o(\sqrt t)$, then
\[\frac{a(t+\varepsilon(t))-a(t)}{b(t)} = \frac{\varepsilon(t)}{\sqrt t} \to 0,\]
and
\[\frac{b(t+\varepsilon(t))}{b(t)} = \sqrt{\frac{t+\varepsilon(t)}{t}} \to 1,
\]
as $t\uparrow\infty$, which implies that Assumption {\bf (A2$_{\bs\delta}$)} also holds in this case.

Also note that, if $L$ is a standard exponential random variable, 
then $\mathbb E L^k = k! <\infty$ for all $k\geq 1$.
Thus the assumptions of Theorems~\ref{th:CLT_small}, \ref{th:CLT_crit} and~\ref{th:CLT_large} hold for all $\delta\in (0, \infty)$.
We now look at the conclusions of these theorems for different values of $\delta$.
For all of these, one can check that the diffusive speed of the monkey walk coincides with the variance estimates of~\cite[Section~V]{BEM17} (made for the same model: runs of Brownian motion and exponential run-lengths -- the only difference is that they consider the case with no drift, but this does not affect the variance estimates).

\noindent{\bf $\bullet$ Assume that $\delta \in (0, \nicefrac23)$.}
In that case (see Remark~\ref{rk:123}),
\[s(t) = \frac{\gamma \mathbb E[L^2] }{2\mathbb E [L]}\cdot t^{\delta}
 =\gamma t^{\delta}.\]
Thus, by Theorem~\ref{th:CLT_small},
\[\frac{X(t) - \gamma t^{\delta}}{\sqrt{\gamma} t^{\nicefrac\delta2}}
\Rightarrow \Omega + \Lambda,\]
where $\Omega\sim \mathcal N(0, 2)$ (because $\frac{2\mathbb E[L^3]}{3\mathbb E[L^2]} = 2$) and $\Lambda\sim\mathcal N(0, 1)$.
Because $\Omega$ and $\Lambda$ are independent, we get
\[\frac{X(t) - \gamma t^{\delta}}{\sqrt{\gamma} t^{\nicefrac\delta2}}
\Rightarrow \mathcal N(0, 3),\quad
\text{ or equivalently, }\quad
\frac{X(t) - \gamma t^{\delta}}{t^{\nicefrac\delta2}}
\Rightarrow \mathcal N(0, 3\gamma).\]

\noindent{\bf $\bullet$ Assume that $\delta \in (\nicefrac23, \nicefrac34)$.}
The only difference with the previous case is in the definition of $s(t)$:
as discussed in Remark~\ref{rk:123}, in this case,
\[s(t) = \frac{\gamma \mathbb E[L^2] }{2\mathbb E [L]} \cdot t^{\delta} -
\frac{(\gamma\delta)^2 \mathbb E[L^{3}]}{6\big(2\delta -1\big)\mathbb E [L]}
\cdot t^{2\delta -1}
= \gamma t^{\delta} + \frac{(\gamma\delta)^2}{2\delta -1 }\cdot t^{2\delta -1}.\]
Theorem~\ref{th:CLT_small} thus gives
\[\frac{X(t) - \gamma t^{\delta} - \frac{(\gamma\delta)^2}{2\delta -1}\cdot t^{2\delta -1}}
{t^{\nicefrac\delta2}} \Rightarrow \mathcal N(0,3\gamma).\]
It is important to note that one cannot neglect the second order term of $s(t)$ in the numerator above because $t^{2\delta -1}\gg t^{\nicefrac\delta2}$. Indeed, $2\delta -1>\nicefrac\delta2$ as soon as $\delta>\nicefrac23$.
This is the reason why, in the definition of $s(t)$ for $\delta<1$, 
we need to keep more and more terms in the sum as $\delta$ increases.

\medskip
\noindent{\bf $\bullet$ Assume that $\delta = 1$.}
In that case, Theorem~\ref{th:CLT_crit} applies. We have
\[s(t) = \bigg(\mathbb E[L]-\frac{1-\mathbb E[\mathrm e^{-\gamma L}]}{\gamma}\bigg)t
= \frac{\gamma}{\gamma+1}\cdot t.\]
Thus,
\[\frac{X(t) - {\gamma t}/(\gamma+1)}{\sqrt{{\gamma t}/(\gamma+1)}}\Rightarrow \Omega + \Gamma,
\]
where $\Gamma\sim \mathcal N(0,1)$ and $\Omega = \Omega_1+\Omega_2+\Omega_3$ are independent, with $\Omega_1, \Omega_2$, and $\Omega_3$ three Gaussian whose variances and covariances are given by
\[\mathrm{Var}(\Omega_1) = \frac{(\gamma-1)^2}{2\gamma^2(\gamma+1)^2},
\quad
\mathrm{Var}(\Omega_2) = \frac{\gamma^2(2\gamma+5)}{(\gamma+1)^2(2\gamma+1)},
\quad\mathrm{Var}(\Omega_3)  = \frac{\gamma}{\gamma+1},\]
\[\mathrm{Cov}(\Omega_1, \Omega_2) = \frac{\gamma^2(2\gamma+5)}{(\gamma+1)^2(2\gamma+1)},\quad\text{ and }\quad
\mathrm{Cov}(\Omega_1, \Omega_3)=\mathrm{Cov}(\Omega_2, \Omega_3)
= \frac{\gamma^2+2\gamma+2}{(\gamma+1)^3}.\]

\medskip
\noindent{\bf $\bullet$ Assume that $\delta>1$.}
In that case, Theorem~\ref{th:CLT_large} applies and gives that, if $\delta\in (1, \nicefrac32]$, then
\[\frac{X(t)-t+t^{2-\delta}/(\gamma\delta(2-\delta))}{\sqrt t} \Rightarrow \mathcal N(0,1),\]
and if $\delta>\nicefrac32$,
\[\frac{X(t)-t}{\sqrt t} \Rightarrow \mathcal N(0,1).\]
In the case when $\delta>\nicefrac32$, the memory kernel is so steep that the monkey walk satisfies the same central limit theorem as the standard Brownian motion (with no relocations).

\subsection{Plan of the paper}
After some preliminary results in Section~\ref{sec:prel}, we prove Theorems~\ref{th:CLT_small}, 
\ref{th:CLT_crit} and~\ref{th:CLT_large} in Sections~\ref{sec:small}, \ref{sec:crit} and~\ref{sec:large}, respectively.

\section{Preliminaries to the proofs}\label{sec:prel}
As in~\cite{MUB19}, our proofs rely on the fact that, for all $t\geq 0$, in distribution, 
$X(t) = Z(S(t))$, where $Z$ is the Markov process of semi-group $P$ started at the origin, 
and $S(t)$ is a random variable, independent of $Z$.
The idea is that, as $t\uparrow\infty$, $S(t)\sim s(t)$ in probability (where $s(t)$ is as defined in~\eqref{eq:def_s}).

To define $S(t)$, one need to introduce the following notation: 
for all $1\leq i< j$, we say that the $i$-th run is the parent of the $j$-th run if $R(j)\in [T_{i-1}, T_i)$.
This implies a genealogy on runs, and we let $i\prec j$ denote the event that the $i$-th run is an ancestor of the $j$-th run in this genealogical tree.
For any $t$, we let $i(t)$ be the integer such that $X(t)\in [T_{i(t)-1}, T_{i(t)})$, 
meaning that $X(t)$ belongs to the $i(t)$-th run of the process.
We also let $A(t)= t-T_{i(t)-1}$.
Finally, we set
\begin{equation}\label{eq:def_S}
S(t) = A(t) + \sum_{i=1}^{i(t)-1} F_i {\bf 1}_{i\prec i(t)},
\end{equation}
where, given the run-lengths $(L_i)_{i\geq 1}$, 
$(F_i)_{i\geq 1}$ is a sequence of independent random variables such that, for all $i\geq 1$,
\begin{equation}\label{eq:def_F}
\mathbb P_{\bf L}(F_i\leq x) = \frac{\int_{0}^x \mu(T_{i-1}+u) \mathrm du}{\int_{0}^{L_i} \mu(T_{i-1}+u) \mathrm du},
\end{equation}
where we let $\mathbb P_{\bf L}$ denote $\mathbb P(\,\cdot\,|(L_i)_{i\geq 1})$.
Note that, for all $i\geq 1$, $T_{i-1} + F_i$ is distributed as $R(n)$ conditioned to belong to the $i$-th run. Interestingly, and this is crucial in our proofs, the distribution of $R(n)$ conditionally on $R(n)\in [T_{i-1}, T_i)$ is the same for all $n\geq 1$.

With these definitions, we have
\begin{lemma}[See~\cite{MUB19}.]\label{lem:S(t)}
For all $t\geq 0$, in distribution,
\[X(t) = Z(S(t)),\]
where $Z$ is the Markov process of semi-group~$P$ started at the origin, and $S(t)$ is defined as in~\eqref{eq:def_S}.
\end{lemma}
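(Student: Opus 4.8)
The plan is to ``unfold'' the branching representation of $X$ sketched in Figure~\ref{fig:def} and then to identify the law of the resulting random time with that of $S(t)$. Fix $t\ge 0$. If $i(t)=1$ there is nothing to prove: then $A(t)=t$, the sum in~\eqref{eq:def_S} is empty so $S(t)=t$, and by construction $(X(s))_{0\le s<T_1}$ is a $P$-Markov process started at the origin, whence $X(t)\overset{d}{=}Z(t)=Z(S(t))$. Assume $i(t)\ge 2$ and let $i(t)=a_0\succ a_1\succ\cdots\succ a_K=1$ be the ancestral line of the $i(t)$-th run in the genealogical tree of runs, so that $a_m$ is the parent of $a_{m-1}$ and $a_K=1$ because that tree is rooted at the first run; thus $\{a_1,\dots,a_K\}=\{i : i\prec i(t)\}$. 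For $1\le m\le K$, set $G_{a_m}:=R_{a_{m-1}-1}-T_{a_m-1}\in[0,L_{a_m})$, the ``local position'' inside the $a_m$-th run of the relocation time $R_{a_{m-1}-1}$ at which the $a_{m-1}$-th run begins.

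First I would use the Markov (semi-group) property to show that, in distribution, $X(t)=Z(U)$, where $U:=A(t)+\sum_{m=1}^{K}G_{a_m}$ and $Z$ is a $P$-Markov process started at the origin and independent of $U$. Indeed, $X(t)$ is the endpoint of the following concatenation of path segments: the path of the first run on $[0,G_{a_K})$ (started at the origin); then the path of the $a_{K-1}$-th run on $[0,G_{a_{K-1}})$, which by construction starts at $X(R_{a_{K-1}-1})$, that is, at the endpoint of the preceding segment; then, inductively, the path of the $a_{m-1}$-th run on $[0,G_{a_{m-1}})$, started at the endpoint of the segment of length $G_{a_m}$; and finally the path of the $i(t)$-th run on $[0,A(t)]$. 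Since in the construction of $X$ each run's path is, conditionally on the entire past, a fresh $P$-Markov process started at its relocation point, successive conditioning on the endpoints of these segments together with $P_uP_v=P_{u+v}$ gives $\mathbb E[h(X(t))\mid \mathbf L,(R_n)_{n\ge1}]=(P_Uh)(0)$ for every bounded measurable $h$; as $U$ is a measurable function of $(\mathbf L,(R_n)_{n\ge1})$, this is precisely the assertion that $X(t)\overset{d}{=}Z(U)$ with $Z$ independent of $U$.

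It then remains to prove $U\overset{d}{=}S(t)$, which I would do conditionally on the run lengths $\mathbf L=(L_i)_{i\ge1}$. Under $\mathbb P_{\mathbf L}$ the relocations $R_1,R_2,\dots$ are independent; writing $\pi(n)$ for the index of the run containing $R_n$ and $D_n:=R_n-T_{\pi(n)-1}$ for the associated local position, the pairs $(\pi(n),D_n)_{n\ge1}$ are therefore independent in $n$, and by the remark recorded just before Lemma~\ref{lem:S(t)} --- that the law of $R_n$ conditioned to lie in the $i$-th run does not depend on $n$ --- conditionally on $\pi(n)=i$ the variable $D_n$ is distributed as the variable $F_i$ of~\eqref{eq:def_F}. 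Conditioning in addition on the genealogical tree $\mathcal T$ (a function of $(\pi(n))_{n\ge1}$), the $(D_n)_{n\ge1}$ stay independent with $D_n\sim F_{\pi(n)}$. Since $G_{a_m}=D_{a_{m-1}-1}$, the indices $a_0-1,\dots,a_{K-1}-1$ are distinct, and $\pi(a_{m-1}-1)=a_m$, it follows that, conditionally on $(\mathbf L,\mathcal T)$, the vector $(G_{a_1},\dots,G_{a_K})$ has independent coordinates with $G_{a_m}\sim F_{a_m}$. As $A(t)$ and $i(t)$ are $\mathbf L$-measurable, $\{a_1,\dots,a_K\}=\{i:i\prec i(t)\}$ is $(\mathbf L,\mathcal T)$-measurable, and the auxiliary variables $(F_i)$ in~\eqref{eq:def_S} are, given $\mathbf L$, independent of $X$ with the very marginals~\eqref{eq:def_F}, the conditional laws given $(\mathbf L,\mathcal T)$ of $U=A(t)+\sum_m G_{a_m}$ and of $S(t)=A(t)+\sum_m F_{a_m}$ coincide. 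Integrating out $(\mathbf L,\mathcal T)$ yields $U\overset{d}{=}S(t)$, and, since $Z$ is independent of both $U$ and $S(t)$, we conclude $X(t)\overset{d}{=}Z(U)\overset{d}{=}Z(S(t))$.

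I expect the identity $U\overset{d}{=}S(t)$ to be the only genuinely delicate point. The displacements $(G_{a_m})$ entering the time change form a \emph{random} subfamily of the relocation displacements $(D_n)$, indexed by the random ancestral line of the $i(t)$-th run, so one must first condition on enough information --- here, the run lengths together with the genealogical tree --- to freeze that ancestral line, and only then can one assert that the selected displacements are conditionally independent with the prescribed marginals. This is exactly where the $n$-independence of the law of $R_n$ conditioned on the run it falls in is used; the rest is bookkeeping with the Markov property.
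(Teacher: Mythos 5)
Your proof is correct, and it follows essentially the route the paper itself indicates: the paper does not prove Lemma~\ref{lem:S(t)} but cites~\cite{MUB19} and sketches exactly this idea (Figure~\ref{fig:S(t)}), namely unfolding the trajectory along the ancestral line of the $i(t)$-th run and using the Markov/semigroup property. Your formalisation, including the step of conditioning on $({\bf L},\mathcal T)$ to freeze the ancestral line and then invoking the $n$-independence of the law of $R_n$ conditioned on the run it falls in, is a faithful and correct write-up of that argument.
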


The intuition behind this lemma is given in Figure~\ref{fig:S(t)}: the idea is that, by the Markov property of the underlying process, $X(t)$ can be seen at the end-point of the bold purple trajectory, which is distributed as $Z$ run for the amount of time $S(t)$.

\begin{figure}
\begin{center}
\includegraphics[width=10cm]{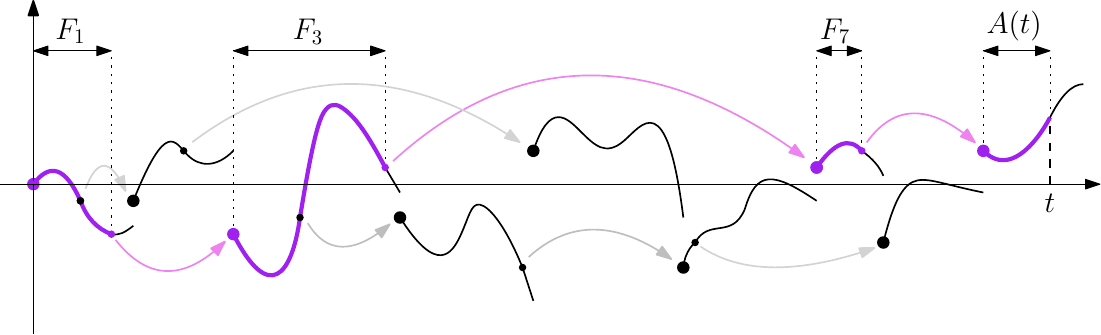}
\end{center}
\caption{Illustration of Lemma~\ref{lem:S(t)}: in this example $i(t) = 9$, and runs $1$, $3$, and $7$ are ancestors of the $9$-th run, i.e.\ $1\prec 3\prec 7\prec 9 = i(t)$.}
\label{fig:S(t)}
\end{figure}

The plan of the proof of Theorems~\ref{th:CLT_small}, \ref{th:CLT_crit}, and~\ref{th:CLT_large} all follow the same plan:
\begin{enumerate}
\item We first prove a limiting theorem for
\[\Phi(n):=\sum_{i=1}^n F_i {\bf 1}_{i\prec n}, \quad\text{ as }n\uparrow\infty.\]
\item We use renewal theory to get that $i(t) \approx t/\mathbb E[L]$ and $A(t) = o(u(t))$ in probability for any $u(t)\uparrow\infty$ as $t\uparrow\infty$. This gives a limiting theorem for $S(t)$.
\item We then compose the limiting theorem for $S(t)$ with the limiting theorem for $Z(t)$ given by Assumption~{\bf (A1)}.
\end{enumerate}
In Sections~\ref{sec:small}, \ref{sec:crit}, and \ref{sec:large}, we prove Theorems~\ref{th:CLT_small}, \ref{th:CLT_crit}, and~\ref{th:CLT_large}, respectively.
We now state a few preliminary results that will be useful in all three proofs: Lemma~\ref{lem:magic} will be useful for Step 1., while Lemmas~\ref{lem:renewal} and~\ref{lem:i(t)} will be useful in Step 2.

\begin{lemma}[{See, e.g., \cite{MUB19}}]\label{lem:magic}
Conditionally on ${\bf L} = (L_i)_{i\geq 1}$, $({\bf 1}_{i\prec n})_{1\leq i\leq n-1}$ is a sequence of independent Bernoulli-distributed random variables of respective parameters $\nicefrac{W_i}{\bar W_i}$,
where, for all $i\geq 1$,
\begin{equation}\label{eq:def_W}
W_i = \int_{T_{i-1}}^{T_i} \mu\quad
\text{ and }\quad
\bar W_i = \sum_{j=1}^i W_j = \int_0^{T_i} \mu.
\end{equation}
\end{lemma}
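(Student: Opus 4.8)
Throughout we work conditionally on $\mathbf L=(L_i)_{i\geq1}$, so that the relocation times $T_i$ and the weights $W_i,\bar W_i$ of~\eqref{eq:def_W} are deterministic. For $j\geq2$, let $P(j)$ denote the parent of the $j$-th run, i.e.\ the unique index $i<j$ such that the relocation point that starts the $j$-th run lies in $[T_{i-1},T_i)$. By the definition of the model, that relocation point has density $\mu/\int_0^{T_{j-1}}\mu$ on $[0,T_{j-1})$ and does not depend on the underlying motion; hence, under $\mathbb P_{\bf L}$, the random variables $(P(j))_{j\geq2}$ are independent, with
\[
\mathbb P_{\bf L}\big(P(j)=i\big)=\frac{\int_{T_{i-1}}^{T_i}\mu}{\int_0^{T_{j-1}}\mu}=\frac{W_i}{\bar W_{j-1}},\qquad 1\leq i\leq j-1.
\]
Moreover $i\prec n$ if and only if $i$ lies on the ancestral chain $n\succ P(n)\succ P^2(n)\succ\cdots\succ1$, and this chain is \emph{strictly decreasing}.

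Fix $n\geq2$. The plan is to reveal the ancestors of $n$ from the largest index down to the smallest. For $1\leq i\leq n-1$ let $\mathcal G_{i+1}$ be the $\sigma$-algebra (under $\mathbb P_{\bf L}$) generated by $(\mathbf 1_{j\prec n})_{i<j\leq n-1}$ together with the induced genealogy among those runs; it is a function of $(P(j))_{j>i+1}$ and of the event recording from which chain-element the chain first drops to or below $i$. Let $M_{i+1}$ be the smallest element of $\{n\}\cup\{j:j\prec n\}$ exceeding $i$ (well defined since $n>i$, and $\mathcal G_{i+1}$-measurable). Because the chain is decreasing, the chain-elements that are $\leq i$ are exactly $P(M_{i+1})$ and the ancestors of $P(M_{i+1})$; since $P(M_{i+1})\leq i$ and all ancestors of $P(M_{i+1})$ are $<P(M_{i+1})\leq i$, we get $\{i\prec n\}=\{P(M_{i+1})=i\}$. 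The crux is to compute $\mathbb P_{\bf L}(P(M_{i+1})=i\mid\mathcal G_{i+1})$. On $\{M_{i+1}=m\}$, conditioning on $\mathcal G_{i+1}$ fixes the values $(P(j))_{j>m}$ compatible with $m\prec n$ — which are independent of $P(m)$ — together with the event that none of runs $i+1,\dots,m-1$ is an ancestor of $n$, which, given the former, is precisely $\{P(m)\leq i\}$. Hence conditionally on $\mathcal G_{i+1}$ the law of $P(m)$ is that of $P(m)$ restricted to $\{1,\dots,i\}$, so $\mathbb P_{\bf L}(P(m)=k\mid\mathcal G_{i+1})=W_k/\bar W_i$ for $k\leq i$, and in particular
\[
\mathbb P_{\bf L}\big(i\prec n\mid\mathcal G_{i+1}\big)=\frac{W_i}{\bar W_i}\qquad\text{almost surely,}
\]
the value being independent of $m$. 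Since $(\mathbf 1_{j\prec n})_{i<j\leq n-1}$ are $\mathcal G_{i+1}$-measurable and $\mathbf 1_{1\prec n}=1$ (consistently with $W_1/\bar W_1=1$), iterating this identity from $i=n-1$ down to $i=1$ and telescoping the conditional expectations shows that, under $\mathbb P_{\bf L}$, the family $(\mathbf 1_{i\prec n})_{1\leq i\leq n-1}$ is independent with $\mathbf 1_{i\prec n}\sim\Ber(W_i/\bar W_i)$.

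\textbf{Alternative, and the main obstacle.} One can instead induct on $n$: conditioning on $P(n)=m$ makes $(\mathbf 1_{i\prec n})_{i<m}$ coincide with $(\mathbf 1_{i\prec m})_{i<m}$ (handled by the inductive hypothesis), $\mathbf 1_{m\prec n}=1$, and $\mathbf 1_{i\prec n}=0$ for $m<i<n$; summing over $m$ with weights $W_m/\bar W_{n-1}$ and using the telescoping identity $\prod_{i=m+1}^{n-1}\big(1-W_i/\bar W_i\big)=\prod_{i=m+1}^{n-1}\bar W_{i-1}/\bar W_i=\bar W_m/\bar W_{n-1}$ yields the product form directly. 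In either route the only delicate point — and the step I expect to require the most care — is justifying that conditioning on the ``high-index part'' of the genealogy does nothing to the parent choice $P(M_{i+1})$ beyond confining it to $\{1,\dots,i\}$; this is exactly where the conditional independence of the $(P(j))_j$ given $\mathbf L$ and the monotonicity of ancestral chains are used.
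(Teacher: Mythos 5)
Your proposal is correct. Be aware that the paper itself offers no proof of Lemma~\ref{lem:magic}: it simply cites~\cite{MUB19} and remarks that the genealogy of the runs is a weighted random recursive tree with weights $(W_i)_{i\geq1}$, so there is no in-paper argument to compare against; your write-up is essentially the standard proof of this weighted-recursive-tree property, and it is sound. The two pillars both check out: (a) conditionally on ${\bf L}$ the parent choices $(P(j))_{j\geq2}$ are independent with $\mathbb P_{\bf L}(P(j)=i)=W_i/\bar W_{j-1}$, because the conditional density of the relocation point starting run $j$ depends on the past only through $T_{j-1}$, which is ${\bf L}$-measurable; (b) on the atom of $\mathcal G_{i+1}$ where the chain elements exceeding $i$ are $n=m_0>m_1>\dots>m_r=M_{i+1}$, the conditioning event is exactly $\{P(m_0)=m_1,\dots,P(m_{r-1})=m_r\}\cap\{P(m_r)\leq i\}$, i.e.\ a product of an event not involving $P(m_r)$ and the event $\{P(m_r)\leq i\}$, so independence gives $\mathbb P_{\bf L}(P(M_{i+1})=k\mid\mathcal G_{i+1})=W_k/\bar W_i$ for $k\leq i$ and hence $\mathbb P_{\bf L}(i\prec n\mid\mathcal G_{i+1})=W_i/\bar W_i$, after which the telescoping of conditional expectations yields the product form (with ${\bf 1}_{1\prec n}=1=W_1/\bar W_1$ handled trivially). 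One phrase is loose: conditioning on $\mathcal G_{i+1}$ does not ``fix the values $(P(j))_{j>m}$'' --- it only pins down the parents along the chain above $i$ and leaves the remaining $P(j)$, $j>i$, unconstrained --- but this is harmless precisely because of the conditional independence of the $P(j)$'s. Your alternative induction is also valid: for a chain $1=i_1<\dots<i_k<n$ the model gives probability $\frac{W_{i_k}}{\bar W_{n-1}}\prod_{j<k}\frac{W_{i_j}}{\bar W_{i_{j+1}-1}}$, which matches $\prod_{i\in\mathrm{chain}}\frac{W_i}{\bar W_i}\prod_{i\notin\mathrm{chain}}\frac{\bar W_{i-1}}{\bar W_i}$ by the telescoping identity you state, and this is arguably the quickest way to present the lemma.
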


The proof of Lemma~\ref{lem:magic}, and the intuition behind all the results in~\cite{MUB19},
is based on the fact that the genealogical tree of the runs as defined by the relationship $\prec$ is a ``weighted (random) recursive tree'' with random weights $(W_i)_{i\geq 1}$.
For more literature on weighted recursive trees, we refer the reader to, e.g.,~\cite{BV05, BV06, D19, PS}.

\begin{lemma}\label{lem:renewal}
In probability as $t\uparrow\infty$, $A(t) = \mathcal O(1)$ (i.e., for all $\varepsilon>0$, there exists $M = M(\varepsilon)$ and $t(\varepsilon)$ such that, for all $t\geq t(\varepsilon)$, $\mathbb P(|A(t)|\geq M)\leq \varepsilon$).
In particular, for any $u : [0,\infty)\to \mathbb R$ such that $u(t)\uparrow\infty$ as $t\uparrow\infty$, $A(t)/u(t) \to 0$ in probability as $t\uparrow\infty$.
\end{lemma}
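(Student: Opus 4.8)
The plan is to recognise $A(t) = t - T_{i(t)-1}$ as the backward recurrence time (the ``age'') at time $t$ of the renewal process with i.i.d.\ inter-arrival times $(L_i)_{i\geq1}$, and to invoke the classical renewal theorem. Since $\mathbb E[L]<\infty$ (this is guaranteed by the moment hypotheses of Theorems~\ref{th:CLT_small}, \ref{th:CLT_crit} and~\ref{th:CLT_large}), renewal theory gives that, as $t\uparrow\infty$, $A(t)$ converges in distribution to the equilibrium law with cumulative distribution function $F_e(x)=\mathbb E[L]^{-1}\int_0^x \mathbb P(L>u)\,\mathrm du$. This is a genuine (proper) probability distribution on $[0,\infty)$ because $\int_0^\infty \mathbb P(L>u)\,\mathrm du=\mathbb E[L]$, so $F_e(x)\uparrow1$ as $x\uparrow\infty$. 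A family of random variables converging in distribution to a proper limit is tight, and since moreover $A(t)\geq0$, this tightness is exactly the statement ``$A(t)=\mathcal O(1)$ in probability'' in the quantitative sense spelled out in the lemma.

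One point requires a little care: the model allows the underlying Markov process to be in discrete time, in which case the run-length distribution $\phi$ may be lattice. If $\phi$ is non-lattice, the convergence $A(t)\Rightarrow F_e$ holds along all real $t$ by the key renewal theorem; if $\phi$ is supported on $h\mathbb Z$, the lattice form of the theorem gives convergence along $t\in h\mathbb Z$, and writing $A(t)=A(h\lfloor t/h\rfloor)+(t-h\lfloor t/h\rfloor)$ with the last term in $[0,h)$ recovers tightness along all $t$. Alternatively, one can argue directly and avoid convergence in distribution: for $t>M$ one has $\{A(t)\geq M\}\subseteq\{B(t-M)> M\}$, where $B(s)$ is the forward recurrence time at time $s$, and $\mathbb P(B(s)> M)=\int_{[0,s]}\mathbb P\big(L>M+(s-y)\big)\,U(\mathrm dy)$, with $U(\mathrm dy)=\sum_{n\geq0}\mathbb P(T_n\in\mathrm dy)$ the renewal measure; substituting $z=s-y$ and applying the key renewal theorem to the directly Riemann integrable function $z\mapsto\mathbb P(L>M+z)$ gives the limit $\mathbb E[L]^{-1}\int_M^\infty\mathbb P(L>u)\,\mathrm du$, which tends to $0$ as $M\uparrow\infty$ because $\mathbb P(L>\cdot)$ is integrable. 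Either way, for every $\varepsilon>0$ one obtains $M=M(\varepsilon)$ and $t(\varepsilon)$ with $\mathbb P(A(t)\geq M)\leq\varepsilon$ for all $t\geq t(\varepsilon)$, which is the first assertion.

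The ``in particular'' assertion is then immediate: given $u(t)\uparrow\infty$ and $\varepsilon,\eta>0$, choose $M$ with $\mathbb P(A(t)\geq M)\leq\eta$ for all large $t$; then, for $t$ large enough that $\varepsilon\, u(t)\geq M$, we get $\mathbb P(A(t)/u(t)\geq\varepsilon)\leq\mathbb P(A(t)\geq M)\leq\eta$, so $A(t)/u(t)\to0$ in probability. I do not expect a real obstacle here: this is a standard renewal-theory fact, and the only subtlety is accommodating a possibly lattice $\phi$, which is routine. The essential input is the finiteness of $\mathbb E[L]$ — without it $A(t)/t$ would converge to a non-degenerate Dynkin--Lamperti limit and the conclusion would fail — but this is ensured by the hypotheses of all three main theorems.
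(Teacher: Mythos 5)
Your argument is correct and is essentially the paper's: both reduce the claim to a one-line application of classical renewal theory, the only cosmetic difference being that the paper bounds $0\leq A(t)$ by the length of the renewal interval straddling $t$ and cites convergence in distribution of that length, whereas you invoke convergence of the age $A(t)$ itself to the equilibrium law (and, more carefully than the paper, address the lattice case). The deduction of the ``in particular'' statement is the same in both.
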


\begin{proof}
This is a straightforward consequence of the fact that, by renewal theory, $T_{i(t)+1}-T_{i(t)}$ converges in distribution to an almost surely finite random variable (see, e.g.~\cite{Bertoin}). The result follows, because $0\leq A(t)\leq T_{i(t)+1}-T_{i(t)}$ almost surely for all $t\geq 0$.
\end{proof}

\begin{lemma}\label{lem:i(t)}
Almost surely as $t\uparrow\infty$, $i(t) = \frac t{\mathbb EL} + \mathcal O(\sqrt{t\log t})$.
\end{lemma}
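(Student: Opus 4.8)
The plan is to derive this from the Strong Law of Large Numbers and the Law of the Iterated Logarithm for the renewal sequence $(T_n)_{n\geq 0}$. Recall that $i(t)$ is characterised by $T_{i(t)-1}\leq t < T_{i(t)}$, so $i(t)$ is exactly the renewal counting function $N(t) = \#\{n\geq 1 : T_n\leq t\}$ plus one (up to a harmless off-by-one adjustment that does not affect a $\mathcal O(\sqrt{t\log t})$ error term). Since $\mathbb E[L]<\infty$ and $\mathrm{Var}(L)<\infty$ under our moment assumptions (indeed $\mathbb E[L^p]<\infty$ with $p\geq 4$), the $T_n$ obey a law of the iterated logarithm.

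First I would record that, by the LIL applied to the i.i.d.\ sequence $(L_i)$, almost surely
\[
T_n = n\,\mathbb E[L] + \mathcal O\!\left(\sqrt{n\log\log n}\right)\qquad\text{as }n\uparrow\infty.
\]
Next, evaluate this along the (random) index $n = i(t)$. By renewal theory $i(t)\to\infty$ almost surely and $i(t)/t\to 1/\mathbb E[L]$ almost surely (elementary renewal SLLN), so in particular $i(t) = \Theta(t)$ a.s.\ and $\log\log i(t) \le \log\log t$ eventually. Plugging $n=i(t)-1$ into the expansion above and using $T_{i(t)-1}\leq t$, together with $t < T_{i(t)} = T_{i(t)-1} + L_{i(t)}$ where $L_{i(t)} = \mathcal O(1)$ in probability (this is essentially the content of Lemma~\ref{lem:renewal}, or again follows from renewal theory), gives
\[
t = T_{i(t)-1} + \mathcal O(1) = (i(t)-1)\,\mathbb E[L] + \mathcal O\!\left(\sqrt{i(t)\log\log i(t)}\right),
\]
and since $i(t) = \Theta(t)$ this is $i(t)\,\mathbb E[L] + \mathcal O(\sqrt{t\log\log t})$ almost surely. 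Rearranging yields $i(t) = t/\mathbb E[L] + \mathcal O(\sqrt{t\log\log t})$, which is stronger than the claimed $\mathcal O(\sqrt{t\log t})$.

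I expect the only mildly delicate point is the handling of the error term along the random index $i(t)$: one must ensure the $\mathcal O(\sqrt{n\log\log n})$ bound, which is an a.s.\ statement as $n\uparrow\infty$, may legitimately be evaluated at $n=i(t)$; this is justified because $i(t)\uparrow\infty$ almost surely, so the a.s.\ event on which the bound holds for all large $n$ automatically gives the bound for all large $t$ with $n$ replaced by $i(t)$. The control of $L_{i(t)}$ (the overshoot's run length) is standard: $T_{i(t)}-T_{i(t)-1}$ converges in distribution, hence is $\mathcal O(1)$ in probability, and for the a.s.\ statement one can alternatively absorb it by noting $\max_{k\le n} L_k = o(\sqrt{n})$ a.s.\ under the second-moment assumption (Borel--Cantelli). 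Everything else is routine bookkeeping with the renewal SLLN and the LIL; if one prefers to avoid the LIL entirely, the weaker bound $i(t) = t/\mathbb E[L] + \mathcal O(\sqrt{t\log t})$ as stated can also be obtained from a union bound over dyadic scales combined with the $L^4$-moment assumption and a maximal inequality, at the cost of a slightly longer argument.
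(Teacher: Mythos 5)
Your argument is essentially the paper's proof: both rest on the law of the iterated logarithm for $T_n$ combined with the renewal strong law, evaluated along the random index $i(t)$ (the paper even settles for the weaker $\sqrt{t\log t}$ normalisation that you improve to $\sqrt{t\log\log t}$). The only point to watch is the overshoot: since the lemma is an almost-sure statement, your main-line use of ``$L_{i(t)}=\mathcal O(1)$ in probability'' is not enough on its own, but your proposed fix via $\max_{k\leq n}L_k=o(\sqrt n)$ a.s.\ works, while the paper avoids the issue altogether by sandwiching $t$ between $\sum_{i=1}^{i(t)-1}L_i$ and $\sum_{i=1}^{i(t)}L_i$ and applying the LIL bound to both sides.
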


\begin{proof}
This is a straightforward consequence of the law of the iterated logarithm. 
First note that $i(t)\uparrow\infty$ almost surely as $t\uparrow\infty$.
Indeed, it is almost surely non-decreasing, and thus either $i(t)\uparrow\infty$, or $i(t)\uparrow i(\infty)<\infty$. In the latter case, we would get that, for all $t\geq 0$, $t\leq \sum_{i=1}^{i(\infty)} L_i$, which is impossible. Thus, $i(t)\uparrow\infty$ almost surely as claimed.
Because, by assumption, $\mathbb EL<\infty$, the strong law of large numbers gives $i(t)\sim t/\mathbb EL$, almost surely as $t\uparrow\infty$.
Now, by assumption, $\mathrm{Var}(L)<\infty$, thus,
\[-\infty<\liminf_{n\uparrow\infty} \frac{\sum_{i=1}^n L_i - n\mathbb EL}{\sqrt{n\log n}}
\leq \limsup_{n\uparrow\infty} \frac{\sum_{i=1}^n L_i - n\mathbb EL}{\sqrt{n\log n}}<\infty,\]
which implies
\[-\infty<\liminf_{t\uparrow\infty} \frac{\sum_{i=1}^{i(t)} L_i - i(t)\mathbb EL}{\sqrt{i(t)\log i(t)}}
\leq \limsup_{t\uparrow\infty} \frac{\sum_{i=1}^{i(t)} L_i - i(t)\mathbb EL}{\sqrt{i(t)\log i(t)}}<\infty.\]
Because $i(t)\sim  t/\mathbb EL$ almost surely, we get
\[-\infty<\liminf_{t\uparrow\infty} \frac{\sum_{i=1}^{i(t)} L_i - i(t)\mathbb EL}{\sqrt{t\log t}}
\leq \limsup_{t\uparrow\infty} \frac{\sum_{i=1}^{i(t)} L_i - i(t)\mathbb EL}{\sqrt{t\log t}}<\infty.\]
By definition,
\[\frac{\sum_{i=1}^{i(t)-1} L_i - i(t)\mathbb EL}{\sqrt{t\log t}}
\leq \frac{t - i(t)\mathbb EL}{\sqrt{t\log t}}
< \frac{\sum_{i=1}^{i(t)} L_i - i(t)\mathbb EL}{\sqrt{t\log t}}.\]
Taking limits, we thus get that, almost surely,
\[-\infty< \liminf_{t\uparrow\infty} \frac{t - i(t)\mathbb EL}{\sqrt{t\log t}}
\leq \limsup_{t\uparrow\infty} \frac{t - i(t)\mathbb EL}{\sqrt{t\log t}}<\infty,\]
which concludes the proof.
\end{proof}

\section{Proof of Theorem \ref{th:CLT_small}}\label{sec:small}
In this section, we assume that $\delta<1$.
As discussed in Section~\ref{sec:prel}, we start by proving a limit theorem for $\Phi(n) = \sum_{i=1}^n F_i {\bf 1}_{i\prec n}$:
\begin{proposition}\label{prop:CLT_Phi_small} 
Assume that $\delta\in (\nicefrac12, 1)$ and that the assumptions of Theorem~\ref{th:CLT_small} hold.
We let ${\bf L} = (L_i)_{i\geq 1}$ and ${\bf F} = (F_i)_{i\geq 1}$ (as defined in~\eqref{eq:def_F}).
Conditionally on $(\bold{L},\bold{F})$, $(\bold{L},\bold{F})$-almost surely, 
\begin{equation} \label{eq: CLT for Phi}
\frac{\Phi(n)-\sigma(n)}{\sqrt{\gamma n^{\delta}}}
\Rightarrow \mathcal N\left(0,\frac{\mathbb E[L^3]}{3\mathbb E[L]^{1-\delta}} \right),
\end{equation}  in distribution as $n\to \infty$, where  
\begin{equation}\label{eq:def_sigma_n}
\sigma(n) = \gamma\delta n^{\delta} \sum_{k=0}^{\lfloor \frac{\delta}{2 - 2\delta}\rfloor} 
\frac{(-\gamma\delta)^k \mathbb E[L^{k+2}]}{(k+2)!(\delta - k(1-\delta))
\mathbb E[L]^{(1-\delta)(k+1)}}\cdot  n^{-k(1-\delta)}.
\end{equation}
\end{proposition}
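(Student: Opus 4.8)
The plan is to apply a conditional central limit theorem for a sum of independent, non-identically distributed random variables. By Lemma~\ref{lem:magic}, conditionally on ${\bf L}$ the indicators $({\bf 1}_{i\prec n})_{1\le i\le n-1}$ are independent, with ${\bf 1}_{i\prec n}$ Bernoulli of parameter $p_i:=W_i/\bar W_i$ (note that $p_i$ does not depend on $n$); since ${\bf F}$ is a function of ${\bf L}$, conditionally on $({\bf L},{\bf F})$ the summands $F_i{\bf 1}_{i\prec n}$ of $\Phi(n)=\sum_{i<n}F_i{\bf 1}_{i\prec n}$ are independent, each equal to $F_i$ with probability $p_i$ and to $0$ otherwise. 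Writing $m(n)=\mathbb E[\Phi(n)\mid{\bf L},{\bf F}]=\sum_{i<n}F_ip_i$ and $v(n)=\mathrm{Var}(\Phi(n)\mid{\bf L},{\bf F})=\sum_{i<n}F_i^2p_i(1-p_i)$, it is enough to prove, $({\bf L},{\bf F})$-almost surely, that (i)~$m(n)=\sigma(n)+o(\sqrt{n^\delta})$; (ii)~$v(n)\sim \gamma n^\delta\,\mathbb E[L^3]/(3\,\mathbb E[L]^{1-\delta})$; and (iii)~$\sum_{i<n}\mathbb E[|F_i{\bf 1}_{i\prec n}-F_ip_i|^3\mid{\bf L},{\bf F}]=o(v(n)^{3/2})$. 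Indeed, Lyapunov's theorem then gives $(\Phi(n)-m(n))/\sqrt{v(n)}\Rightarrow\mathcal N(0,1)$, and (i)--(ii) together with Slutsky's lemma yield~\eqref{eq: CLT for Phi}.

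All three ingredients rest on the identity $p_i=W_i/\bar W_i=1-e^{-\gamma(T_i^\delta-T_{i-1}^\delta)}$, valid up to a super-polynomially small error because $\bar W_i=e^{\gamma T_i^\delta}-1$ and $\mu$ is the derivative of $x\mapsto e^{\gamma x^\delta}$. Since $T_{i-1}\to\infty$ almost surely, a Taylor expansion gives $\gamma(T_i^\delta-T_{i-1}^\delta)=\gamma\delta T_{i-1}^{\delta-1}L_i+\mathcal O(T_{i-1}^{\delta-2}L_i^2)$, hence $p_i=\gamma\delta T_{i-1}^{\delta-1}L_i+\mathcal O(T_{i-1}^{2\delta-2}L_i^2)$; one also uses that, conditionally on ${\bf L}$, the law of $F_i/L_i$ converges to the uniform law on $[0,1]$ as $i\to\infty$. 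For (ii), $v(n)=\gamma\delta\sum_{i<n}F_i^2L_iT_{i-1}^{\delta-1}+\mathcal O(\sum_{i<n}L_i^4T_{i-1}^{2\delta-2})$; the error sum is $\mathcal O(n^{2\delta-1})=o(n^\delta)$, and in the main sum one replaces $F_i^2$ by its conditional mean ($\sim L_i^2/3$), $T_{i-1}$ by $i\,\mathbb E[L]$, and sums using $\sum_{i<n}i^{\delta-1}\sim n^\delta/\delta$ (Lemmas~\ref{lem:renewal}--\ref{lem:i(t)}), the fluctuations being absorbed by a martingale bound; this produces the constant in~(ii). Item (iii) is cruder still: the third absolute central moment of $F_i{\bf 1}_{i\prec n}$ is $\mathcal O(F_i^3p_i)=\mathcal O(L_i^4T_{i-1}^{\delta-1})$, so its sum is $\mathcal O(n^\delta)$ whereas $v(n)^{3/2}\asymp n^{3\delta/2}$.

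The heart of the matter is (i). Conditioning first only on ${\bf L}$ and using that $F_i$ is independent of ${\bf 1}_{i\prec n}$ given ${\bf L}$, one has $\mathbb E[m(n)\mid{\bf L}]=\sum_{i<n}\mathbb E[F_i\mid{\bf L}]\,p_i=\sum_{i<n}\bar W_i^{-1}\int_0^{L_i}u\,\mu(T_{i-1}+u)\,\mathrm du$ by~\eqref{eq:def_F}; integrating by parts (again using $\mu=(e^{\gamma x^\delta})'$) and then replacing $\bar W_i$ by $e^{\gamma T_i^\delta}$, this equals, up to a summable error, $\sum_{i<n}\big(L_i-\int_0^{L_i}e^{-\gamma(T_i^\delta-(T_{i-1}+u)^\delta)}\,\mathrm du\big)$, and expanding the exponent turns the $i$-th summand into $L_i-\frac{1-\exp(-\gamma\delta T_{i-1}^{\delta-1}L_i)}{\gamma\delta T_{i-1}^{\delta-1}}$, up to a summable error, which Taylor-expands as $\sum_{m\ge1}\frac{(-1)^{m-1}}{(m+1)!}(\gamma\delta)^mT_{i-1}^{m(\delta-1)}L_i^{m+1}$. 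Truncating this at $m=\lfloor\delta/(2-2\delta)\rfloor+1$ (the omitted terms being $o(n^{\delta/2})$), and evaluating each remaining $\sum_{i<n}T_{i-1}^{m(\delta-1)}L_i^{m+1}$ by replacing $T_{i-1}$ with $i\,\mathbb E[L]$ and $L_i^{m+1}$ with $\mathbb E[L^{m+1}]$ and using $\sum_{i<n}i^{m(\delta-1)}\sim n^{1+m(\delta-1)}/(1+m(\delta-1))$, reproduces term by term the deterministic sum $\sigma(n)$ of~\eqref{eq:def_sigma_n}.

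It then remains to show that three families of errors are $o(\sqrt{n^\delta})$ almost surely: the martingale $m(n)-\mathbb E[m(n)\mid{\bf L}]=\sum_{i<n}(F_i-\mathbb E[F_i\mid{\bf L}])p_i$, whose $L^2$-norm is $\mathcal O(n^{\delta-1/2})$; the Taylor remainders $\mathcal O(\sum_{i<n}T_{i-1}^{(K+1)(\delta-1)}L_i^{K+2})$ with $K=\lfloor\delta/(2-2\delta)\rfloor+1$; and the genuinely delicate ones, namely the fluctuations of the weighted sums $\sum_{i<n}i^{m(\delta-1)}(L_i^{m+1}-\mathbb E[L^{m+1}])$ and the error committed in replacing $T_{i-1}$ by $i\,\mathbb E[L]$ in each weight. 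I expect this last item to be the main obstacle: the number of non-negligible terms of $\sigma(n)$ grows without bound as $\delta\uparrow1$, each forcing control of a weighted strong law for an ever higher power of $L$ to the fine precision $o(n^{\delta/2})$—which, for $\delta$ close to $1$, lies well below the natural fluctuation scale of the individual terms—and it is exactly this that necessitates the moment assumption $\mathbb E[L^p]<\infty$ with $p=\max\{8,\lfloor1/(1-\delta)\rfloor+1\}$, applied through Marcinkiewicz--Zygmund-type strong laws for the higher-order terms. One concludes by handling separately the finitely many small indices $i$, which contribute $\mathcal O(1)$, and by checking that the discarded super-polynomially small quantities are harmless.
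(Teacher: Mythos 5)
Your proposal is correct and follows essentially the same route as the paper: condition on $(\mathbf{L},\mathbf{F})$, apply a CLT for independent non-identically distributed summands (your Lyapunov condition plays the role of the paper's Lindeberg check, and both reduce to the crude bounds $F_i\leq L_i$, $p_i=\mathcal O(L_iT_{i-1}^{\delta-1})$), and obtain the almost-sure asymptotics of the conditional mean and variance by the same integration by parts on $\int_0^{L_i}x\,\mu(T_{i-1}+x)\,\mathrm dx$, the same Taylor expansion producing the alternating series whose truncation at $m=\lfloor\delta/(2-2\delta)\rfloor+1$ reproduces $\sigma(n)$, and the same Petrov/Marcinkiewicz--Zygmund-type weighted strong laws to replace $T_i$ by $i\,\mathbb E[L]$ and powers of $L_i$ by their means to precision $o(n^{\delta/2})$ (the paper's Lemma~\ref{lem:sum t_i}, Theorem~\ref{th:SLLN_Petrov}, and Lemma~\ref{expectation and variance approximation}). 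One slip to fix: $\mathbf{F}$ is \emph{not} a function of $\mathbf{L}$; what justifies conditioning on $(\mathbf{L},\mathbf{F})$ without changing the Bernoulli parameters is that, given $\mathbf{L}$, the sequence $(F_i)_{i\geq 1}$ is independent of the indicators $({\bf 1}_{i\prec n})_i$ — and a couple of your ``summable error'' claims (e.g.\ replacing the exponent by $\gamma\delta T_{i-1}^{\delta-1}(L_i-u)$ with the prefactor $T^{1-\delta}$) are really only $\mathcal O(\log n)$, which is still $o(n^{\delta/2})$ and hence harmless.
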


Before proving Proposition~\ref{prop:CLT_Phi_small} we show how to use it to prove Theorem~\ref{th:CLT_small}:
\begin{proof}[Proof of Theorem \ref{th:CLT_small}]
We first aim at proving a limit theorem for $S(t)$.
Recall that, by definition (see~\eqref{eq:def_S}),
\[S(t) = \Phi(i(t)-1) + A(t).\]
By Lemma~\ref{lem:i(t)}, $i(t)\uparrow\infty$ almost surely as $t\to \infty$. 
Thus, Proposition~\ref{prop:CLT_Phi_small} implies that, in distribution as $t\uparrow\infty$,
\begin{equation}\label{eq:small_CLT1}
\frac{\Phi(i(t)-1)-\sigma(i(t)-1)}{\sqrt{\gamma i(t)^{\delta}}}\Rightarrow 
\mathcal N\left(0,\frac{\mathbb E[L^3]}{3\mathbb E[L]^{1-\delta}} \right).
\end{equation}
Now, by Lemma~\ref{lem:i(t)}, $i(t)= t/\mathbb EL + \mathcal O(\sqrt{t\log t})$, almost surely as $t\uparrow\infty$. Thus,
\begin{equation}\label{eq:sigma(n)}
\sigma(i(t)-1) =  \gamma\delta \sum_{k=0}^{\lfloor \frac{\delta}{2 - 2\delta}\rfloor} 
\frac{(-\gamma\delta)^k \mathbb E[L^{k+2}]}{(k+2)!(\delta - k(1-\delta))
\mathbb E[L]^{(1-\delta)(k+1)}}\cdot  i(t)^{-k(1-\delta)+\delta},
\end{equation}
For all $0\leq k\leq \lfloor \frac{\delta}{2 - 2\delta}\rfloor$,
\ba
i(t)^{-k(1-\delta)+\delta} 
&= \bigg(\frac t{\mathbb EL} + \mathcal O(\sqrt{t\log t})\bigg)^{-k(1-\delta)+\delta}
= \bigg(\frac{t}{\mathbb EL}\bigg)^{-k(1-\delta)+\delta} \bigg( 1+ \mathcal O\bigg(\sqrt{\frac{\log t}{t}}\bigg)\bigg)\\
&= \bigg(\frac{t}{\mathbb EL}\bigg)^{-k(1-\delta)+\delta} + \mathcal O\big(t^{-k(1-\delta)+\delta-\nicefrac12}\sqrt{\log t}\big)
= \bigg(\frac{t}{\mathbb EL}\bigg)^{-k(1-\delta)+\delta}  + o(t^{\nicefrac\delta2}),
\ea
where, in the last equality, we have used the fact that, for all $k\geq 0$, 
$-k(1-\delta)+\delta-\nicefrac12\leq \delta-\nicefrac12<\nicefrac{\delta}2$.
Using this in~\eqref{eq:sigma(n)} gives
\[\sigma(i(t)-1)  = \gamma\delta \sum_{k=0}^{\lfloor \frac{\delta}{2 - 2\delta}\rfloor} 
\frac{(-\gamma\delta)^k \mathbb E[L^{k+2}]}{(k+2)!(\delta - k(1-\delta))
\mathbb E[L]} \cdot t^{-k(1-\delta)+\delta} + o(t^{\nicefrac\delta2})
= s(t) + o(t^{\nicefrac\delta2}),
\]
by definition of $s(t)$ is this case (see~\eqref{eq:def_s} for $\delta <1$).
Using this and the fact that $i(t)\sim t/\mathbb EL$ in~\eqref{eq:small_CLT1}, we get
\[\frac{\Phi(i(t))-s(t) + o(t^{\nicefrac\delta2})}{\sqrt{\gamma (t/\mathbb EL)^{\delta}}}\Rightarrow 
\mathcal N\left(0,\frac{\mathbb E[L^3]}{3\mathbb E[L]^{1-\delta}} \right),\]
which implies
\[\frac{\Phi(i(t)-1)-s(t)}{t^{\nicefrac\delta2}}\Rightarrow 
\mathcal N\left(0,\frac{\gamma\mathbb E[L^3]}{3\mathbb E[L]} \right).\]
Recall that $S(t) = \Phi(i(t)-1) + A(t)$ (see~\eqref{eq:def_S});
using the fact that, by Lemma~\ref{lem:renewal}, $A(t)/t^{\nicefrac\delta2}\to 0$ in probability, we get that, in distribution as $t\uparrow\infty$.
\begin{equation}\label{eq:small_CLT_S}
\frac{S(t)-s(t)}{t^{\nicefrac\delta2}}\Rightarrow 
\mathcal N\left(0,\frac{\gamma\mathbb E[L^3]}{3\mathbb E[L]}\right).
\end{equation}

By Skorokhod's representation theorem, 
there exists a probability space on which there exist $\tilde{S}(t)$ distributed as $S(t)$ for all $t\geq 0$, and $\hat\Omega\sim \mathcal N(0,{\mathbb E[L^3]}/(3\mathbb E[L]))$, 
such that almost surely when $t\to \infty$,
\ba
\tilde{S}(t) 
& = s(t) + \hat\Omega \sqrt{\gamma t^{ \delta }} + o(t^{\nicefrac\delta2}) \\
& = s(t)+ \hat\Omega \sqrt{ \frac{2 \mathbb E[L] }{ \mathbb E[L^2] } s(t) } 
+ \hat\Omega\left(\sqrt{\gamma t^{\delta}} - \sqrt{ \frac{2 \mathbb E[L] }{ \mathbb E[L^2] } s(t) }\right) + o(t^{\nicefrac\delta2}) \\
& = s(t) + \hat\Omega \sqrt{ \frac{2 \mathbb E[L] }{ \mathbb E[L^2] }s(t)} + o\left(\sqrt{s(t)}\right).
\ea
Indeed, we have used the fact that, as $t\uparrow\infty$, 
$s(t)\sim {\gamma \mathbb E[L^2]t^{\delta}}/(2\mathbb E [L])$ (see Remark~\ref{rk:123}).
By Assumption {\bf (A1)}, on the same probability space, 
there exists $(\tilde{Z}(t))_{t\geq 0}$ independent of $(\tilde S(t))_{t\geq 0}$ and a random variable $\Lambda$, independent of $\hat\Omega$ such that
$\tilde{Z}(t)$ has the same distribution as $Z(t)$ for all $t\geq 0$, $\Lambda$ has distribution~$\nu$, and ${\tilde{Z}(t)-a(t)}/{b(t)}\to \Lambda$ almost surely as $t\uparrow\infty$. 
Therefore, by assumption \textbf{(A2)}, almost surely as $t\to \infty$, 
\ba
\frac{\tilde{Z}(\tilde{S}(t))-a(s(t))}{b(s(t))} 
& = \frac{b(S(t))}{b(s(t))} \cdot \frac{\tilde{Z}(\tilde{S}(t))-a(\tilde S(t))}{b(\tilde{S}(t))} 
+\frac{a(\tilde{S}(t))-a(s(t))}{b(s(t))} \\
&\to f(\Omega)+ \Lambda g(\Omega),
\ea
where $\Omega = \hat\Omega \sqrt{{2\mathbb E[L]}/{\mathbb E[L^2]}}\sim \mathcal N(0, {2\mathbb E[L^3]}/(3\mathbb E[L^2]))$.
Because $\tilde Z(\tilde S(t)) = Z(S(t)) = X(t)$ in distribution for all $t\geq 0$ (by Lemma~\ref{lem:S(t)}), 
this concludes the proof.
\end{proof}

The rest of the section is devoted to proving Proposition~\ref{prop:CLT_Phi_small}. 
The idea of the proof is as follows: we first reason conditionally on ${\bf L}$ and ${\bf F}$ and 
apply Linderberg's central limit theorem to prove that
\begin{equation}\label{eq:Lind}
\frac{\Phi(n)-\mathbb E_{{\bf L}, {\bf F}}[\Phi(n)]}{\sqrt{\mathrm{Var}_{{\bf L}, {\bf F}}(\Phi(n))}}\Rightarrow \Nor(0,1)
\end{equation}
in distribution as $n\uparrow\infty$.
This is done in Section~\ref{sec:Lindeberg}.
To prove Proposition~\ref{prop:CLT_Phi_small}, we thus need to find asymptotic equivalents 
for $\mathbb E_{{\bf L}, {\bf F}}[\Phi(n)]$ and $\mathrm{Var}_{{\bf L}, {\bf F}}(\Phi(n))$.
To do so, we apply Lemma~\ref{lem:magic} to get that
\[\mathbb E_{{\bf L}, {\bf F}}[\Phi(n)]
=\sum_{i=1}^{n} \frac{F_iW_i}{\bar W_i}\quad \text{and}\quad 
\mathrm{Var}_{{\bf L}, {\bf F}}(\Phi(n))
=\sum_{i=1}^{n} \frac{F_iW_i}{\bar W_i}\left(1-\frac{W_i}{\bar W_i}\right).\] 
In Lemma~\ref{expectation and variance approximation}, 
we prove that, conditionally on ${\bf L}$, 
we can almost surely approximate $\mathbb E_{{\bf L}, {\bf F}}[\Phi(n)]$ and $\mathrm{Var}_{{\bf L}, {\bf F}}(\Phi(n))$ by their expectations conditionally on ${\bf L}$, i.e.,
\begin{equation}\label{eq:approx_by_exp}
\mathbb E_{{\bf L}, {\bf F}}[\Phi(n)]\approx \sum_{i=1}^{n} \frac{\mathbb E_{\bf L}[F_i]W_i}{\bar W_i}
\quad\text{ and }\quad
\mathrm{Var}_{{\bf L}, {\bf F}}(\Phi(n))\approx \sum_{i=1}^{n} \frac{\mathbb E_{\bf L}[F_i]W_i}{\bar W_i}
\left(1-\frac{W_i}{\bar W_i}\right).
\end{equation}
To estimate these expectations, we note that, by Definition of $(F_i)_{i\geq 1}$ (see~\eqref{eq:def_F}, see also~\eqref{eq:def_W} for the definition of $(W_i)_{i\geq 1}$),
\begin{equation}\label{eq:exp_F}
\mathbb E_{\bf L}[F_i] 
= \frac1{W_i}\int_0^{L_i} x\mu(T_{i-1} + x)\mathrm dx
= \frac{1}{W_i}\int_0^{L_i} \gamma\delta x(T_{i-1}+x)^{\delta-1} \mathrm e^{\gamma (T_{i-1}+x)^\delta}\mathrm dx.
\end{equation}
In Section~\ref{sub:small_prel}, we prove preliminary technical results that will be useful in the rest.
In Section~\ref{sub:approx_by_exp} we state and prove a rigorous version of~\eqref{eq:approx_by_exp}.
Finally, in Section~\ref{sec:Lindeberg}, we use Linderberg's theorem to prove~\eqref{eq:Lind} and conclude the proof of Proposition~\ref{prop:CLT_Phi_small}.

\subsection{A preliminary lemma}\label{sub:small_prel}
In this section, we prove the following lemma: recall that, for all $i\geq 1$, $T_i = \sum_{j=1}^i L_i$ is the time of the $i$-th relocation.
\begin{lemma}\label{lem:sum t_i} 
Let $g:\RR \to \RR$ be a function such that $\mathbb E[g(L)],\mathrm{Var}(g(L))<\infty$. 
\begin{itemize}
\item[{\rm\bf(i)}] For all $\ell\in(0,1)$ almost surely as $n\to \infty$,  
\[\sum_{i=1}^n \frac{g(L_i)}{T_i^{1-\ell}}
= \frac{\mathbb E[g(L)]}{\ell \cdot \mathbb E[L]^{1-\ell}} n^{\ell} + o(n^{\ell/2}).\]
\item[{\rm\bf(ii)}] For all $\ell>1$, almost surely as $n\to \infty$,
 \[\sum_{i=1}^n \frac{g(L_i)}{T_i^\ell} 
= \mathcal O(1).\]
\item[{\rm\bf(iii)}]
Almost surely as $n\uparrow\infty$,
\[\sum_{i=1}^n \frac{g(L_i)}{T_i} = \mathbb E[g(L)]\log n + \mathcal O(1).\]
\end{itemize}
\end{lemma}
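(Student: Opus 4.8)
The plan is to write $T_i = i\,\mathbb E[L] + \Delta_i$, where $\Delta_i = \sum_{j=1}^i (L_j - \mathbb E[L])$, and to exploit that, by the law of the iterated logarithm, $\Delta_i = \mathcal O(\sqrt{i\log i})$ almost surely, so that $T_i = i\,\mathbb E[L]\,(1 + \mathcal O(\sqrt{\log i / i}\,))$ almost surely. This reduces each of the three sums to a deterministic sum $\sum_i g(L_i)/(i\,\mathbb E[L])^{\text{power}}$ up to manageable error terms, and then to a weighted strong law of large numbers for the i.i.d.\ sequence $(g(L_i))_{i\geq 1}$.

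For part \textbf{(i)}: I would first replace $T_i^{1-\ell}$ by $(i\,\mathbb E[L])^{1-\ell}$. The relative error is $1 + \mathcal O(\sqrt{\log i/i})$, so the difference between $\sum_{i=1}^n g(L_i)/T_i^{1-\ell}$ and $(\mathbb E[L])^{\ell-1}\sum_{i=1}^n g(L_i)\, i^{\ell-1}$ is $\mathcal O\big(\sum_{i=1}^n |g(L_i)|\, i^{\ell-1}\sqrt{\log i/i}\,\big)$; since $\mathbb E|g(L)|<\infty$ and $\ell - \tfrac12 < \ell$, a Kronecker-type argument shows this error is $o(n^{\ell/2})$ (in fact $\mathcal O(n^{\ell-1/2}\sqrt{\log n})$, which is $o(n^{\ell/2})$ because $\ell<1$ forces $\ell - \tfrac12 < \tfrac{\ell}{2}$). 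Then I would handle the main term $\sum_{i=1}^n g(L_i)\,i^{\ell-1}$ by writing $g(L_i) = \mathbb E[g(L)] + (g(L_i) - \mathbb E[g(L)])$: the first piece gives $\mathbb E[g(L)]\sum_{i=1}^n i^{\ell-1} = \mathbb E[g(L)]\,n^\ell/\ell + \mathcal O(n^{\ell-1})$ by comparison with $\int_0^n x^{\ell-1}\mathrm dx$, while the centred piece $\sum_{i=1}^n (g(L_i)-\mathbb E[g(L)])\,i^{\ell-1}$ is $o(n^{\ell/2})$ by Kronecker's lemma applied to $\sum_i (g(L_i)-\mathbb E[g(L)])\,i^{\ell-1-\ell/2}$, whose partial sums converge almost surely since $\sum_i i^{2(\ell/2-1)}\mathrm{Var}(g(L)) = \mathrm{Var}(g(L))\sum_i i^{\ell-2}<\infty$ (as $\ell<1$). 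Collecting the terms yields the claimed asymptotic $\mathbb E[g(L)]\,n^\ell/(\ell\,\mathbb E[L]^{1-\ell}) + o(n^{\ell/2})$.

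For part \textbf{(ii)}: since $\ell>1$ and $T_i \geq c\,i$ eventually (almost surely $T_i \sim i\,\mathbb E[L]$), we have $|g(L_i)|/T_i^\ell \leq C\,|g(L_i)|/i^\ell$ for large $i$, and $\sum_i \mathbb E|g(L)|/i^\ell < \infty$; hence $\sum_i \mathbb E[|g(L_i)|/i^\ell]<\infty$, so $\sum_i |g(L_i)|/i^\ell$ converges almost surely by monotone convergence, giving $\mathcal O(1)$. For part \textbf{(iii)}: the same reduction gives $\sum_{i=1}^n g(L_i)/T_i = (\mathbb E[L])^{-1}\sum_{i=1}^n g(L_i)/i + \mathcal O\big(\sum_i |g(L_i)|\sqrt{\log i}/i^{3/2}\big)$; the error sum converges almost surely (its expectation is finite), and $(\mathbb E[L])^{-1}\sum_{i=1}^n g(L_i)/i = \mathbb E[g(L)]\,\log n + \mathcal O(1)$ by splitting $g(L_i) = \mathbb E[g(L)] + (g(L_i)-\mathbb E[g(L)])$, using $\sum_{i=1}^n 1/i = \log n + \mathcal O(1)$ for the first part and Kronecker's lemma (with $\sum_i \mathrm{Var}(g(L))/i^2 < \infty$) for the centred part, so that $\sum_{i=1}^n (g(L_i)-\mathbb E[g(L)])/i$ converges almost surely, hence is $\mathcal O(1)$. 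Note this last step secretly uses $\mathbb E[L]=1$ in the statement as written — one should in fact keep the constant $\mathbb E[L]$; I would state the result as $\sum_{i=1}^n g(L_i)/T_i = (\mathbb E[g(L)]/\mathbb E[L])\log n + \mathcal O(1)$, or simply verify that in every later invocation only the $\mathcal O(\log n)$-rate matters.

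The main obstacle is bookkeeping the error from $T_i \approx i\,\mathbb E[L]$: one must verify at each step that the fluctuation-induced error $\mathcal O(i^{\text{power}-1/2}\sqrt{\log i})$ is genuinely smaller than the target ($o(n^{\ell/2})$ in (i), $\mathcal O(1)$ in (ii)–(iii)), which is where the strict inequality $\ell - \tfrac12 < \tfrac{\ell}{2}$ (equivalently $\ell<1$) and the summability thresholds are used — and where the finite-variance hypothesis on $g(L)$, rather than mere integrability, is genuinely needed for the Kronecker-lemma steps.
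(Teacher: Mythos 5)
Your proposal is correct and follows essentially the same route as the paper: replace $T_i$ by $i\,\mathbb E[L]$ via the law of the iterated logarithm, split $g(L_i)$ into mean plus centred part, and control the centred sums by a weighted strong law (the paper cites Petrov's theorem, which is the same Khintchine--Kolmogorov-plus-Kronecker machinery you invoke); your treatment of (ii) as a nonnegative series with finite expected sum is a mild simplification of the paper's martingale argument, and your bookkeeping of the error exponents in (i) matches the paper's. Your remark on (iii) is also vindicated: the paper's own proof produces $\frac{\mathbb E[g(L)]}{\mathbb E[L]}\log n + \mathcal O(1)$, so the factor $1/\mathbb E[L]$ is indeed missing from the statement, and, as you note, only the $\mathcal O(\log n)$ rate is used in the later applications, so nothing downstream is affected.
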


To prove this lemma, we use the following strong law of large numbers, for sums of independent but not identically-distributed random variables:
\begin{theorem}[(see~\cite{petrov})]\label{th:SLLN_Petrov}
Let $(\Delta_i)_{i\geq 1}$ be a sequence of independent random variables,
and let $(a_i)_{i\geq 1}$ be a sequence of real numbers such that $a_n\to+\infty$ as $n\uparrow\infty$.
Assume that there exists $\alpha\in[1,2]$ such that
\[\sum_{i\geq 1} \frac{\mathbb E[|\Delta_i|^\alpha]}{a_i^\alpha}<\infty.\]
Then, almost surely as $n\uparrow\infty$,
\[\frac1{a_n}\sum_{i=1}^n \Delta_i \to 0.\]
\end{theorem}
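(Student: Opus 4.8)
The plan is to prove Theorem~\ref{th:SLLN_Petrov} by deducing it from two classical ingredients: an almost-sure convergence theorem for series of independent random variables, and Kronecker's lemma. As is usual for statements of this kind, I would first reduce to the case where $(a_i)_{i\geq 1}$ is positive and non-decreasing (this is the situation in all the applications we need, where $a_n$ is a power of $n$, a constant times such a power, or $\log n$) and, when $\alpha\in(1,2]$, where $\mathbb E[\Delta_i]=0$ for every $i$ (for $\alpha=1$ no centring is needed, as will be clear below). The whole point is then to prove that the random series $\sum_{i\geq 1}\Delta_i/a_i$ converges almost surely: once this is known, Kronecker's lemma — if $\sum_i x_i/a_i$ converges and $0<a_i\uparrow\infty$, then $\frac1{a_n}\sum_{i=1}^n x_i\to 0$ — applied pathwise on the almost-sure event of convergence gives $\frac1{a_n}\sum_{i=1}^n\Delta_i\to 0$ almost surely, which is the claim.

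For the almost-sure convergence of $\sum_i\Delta_i/a_i$, I would split according to the value of $\alpha$. When $\alpha=1$, this needs nothing probabilistic beyond Tonelli's theorem: from $\mathbb E[\sum_i|\Delta_i|/a_i]=\sum_i\mathbb E[|\Delta_i|]/a_i<\infty$ we get $\sum_i|\Delta_i|/a_i<\infty$ almost surely, so the series converges absolutely almost surely. When $\alpha\in(1,2]$, set $Y_i=\Delta_i/a_i$; these are independent, centred, with finite $\alpha$-th moments. The von Bahr--Esseen inequality gives, for the partial sums $S_n=\sum_{i=1}^n Y_i$, the uniform bound $\mathbb E[|S_n|^\alpha]\leq 2\sum_{i=1}^n\mathbb E[|Y_i|^\alpha]\leq 2\sum_{i\geq 1}\mathbb E[|\Delta_i|^\alpha]/a_i^\alpha<\infty$. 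Since $(S_n)_{n\geq 1}$ is a martingale bounded in $L^\alpha$, hence in $L^1$, Doob's martingale convergence theorem shows that $S_n$ converges almost surely, i.e.\ $\sum_i\Delta_i/a_i$ converges almost surely. (If one wishes to avoid martingale language, the same von Bahr--Esseen bound shows that $(S_n)$ is Cauchy in $L^\alpha$, and one upgrades $L^\alpha$-convergence to almost-sure convergence by combining a Kolmogorov/L\'evy-type maximal inequality with the Borel--Cantelli lemma along a rapidly increasing subsequence.)

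The main obstacle is exactly this step for $\alpha\in(1,2]$: unlike the deterministic Kronecker argument and unlike the trivial $\alpha=1$ case, it genuinely uses a moment inequality for sums of independent mean-zero random variables (von Bahr--Esseen, or equivalently a Marcinkiewicz--Zygmund/Doob maximal inequality), and it is here that the centring $\mathbb E[\Delta_i]=0$ is used essentially — without it the conclusion fails, as the deterministic example $\Delta_i\equiv 1$, $a_i=i$, $\alpha=2$ shows. Everything else — the reduction to monotone $(a_i)$, the bookkeeping of constants, and Kronecker's lemma itself — is routine.
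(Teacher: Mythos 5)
The paper does not prove this statement at all --- it is quoted from Petrov's book --- so there is nothing internal to compare against; what you have written is essentially the standard proof of Petrov's theorem (a.s.\ convergence of the random series $\sum_i \Delta_i/a_i$, via von Bahr--Esseen and Doob for $\alpha\in(1,2]$ and via Tonelli for $\alpha=1$, followed by Kronecker's lemma), and that core argument is correct. The von Bahr--Esseen bound $\mathbb E[|S_n|^\alpha]\leq 2\sum_{i\leq n}\mathbb E[|Y_i|^\alpha]$ does make $(S_n)$ an $L^1$-bounded martingale, so Doob applies, and Kronecker then converts a.s.\ convergence of the series into the stated Ces\`aro-type conclusion.

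The one point you should make explicit rather than wave at: your two ``reductions'' are not reductions but \emph{corrections} to the statement as printed. As you yourself observe, without centring the theorem is false for $\alpha\in(1,2]$ ($\Delta_i\equiv 1$, $a_i=i$); Petrov's actual theorem concludes $a_n^{-1}\sum_{i\leq n}(\Delta_i-\mathbb E[\Delta_i])\to 0$, and there is no way to ``reduce to $\mathbb E[\Delta_i]=0$'' from the hypotheses as given. The same is true of monotonicity: Kronecker's lemma genuinely requires $0<a_n\uparrow\infty$, and the theorem itself fails for merely divergent $(a_n)$ --- take $\Delta_i$ i.i.d.\ standard Gaussian, $a_i=i$ except $a_{n_k}=n_k^{\nicefrac14}$ along a sparse sequence $n_k=2^{2k}$; then $\sum_i a_i^{-2}<\infty$ but $S_{n_k}/a_{n_k}$ is unbounded by the law of the iterated logarithm. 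There is no general device for passing from monotone to non-monotone $(a_n)$ (replacing $a_n$ by $\inf_{m\geq n}a_m$ ruins the summability hypothesis, replacing it by $\sup_{m\leq n}a_m$ ruins the conclusion), so monotonicity must simply be added as a hypothesis. Since in every application in the paper the $\Delta_i$ are centred and the $a_i$ are (eventually) increasing, the corrected statement is all that is needed, but your write-up should say plainly that you are proving the corrected statement, not the one displayed.
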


\begin{proof}[Proof of Lemma~\ref{lem:sum t_i}] 
{\bf (i)} By the law of the iterated logarithm $T_i=\mathbb E[L] i + \mathcal{O}(\sqrt{i \log i})$ almost surely as $i\to \infty$. Hence,  
\ban
\sum_{i=1}^n g(L_i)T_{i}^{\ell -1} 
& = \sum_{i=1}^n g(L_i)\left( \mathbb E[L] i 
+ \mathcal{O}(\sqrt{i \log i})\right)^{\ell -1} \nonumber \\
& = \mathbb E[L]^{\ell -1} \sum_{i=1}^n g(L_i) i^{\ell -1} 
+ \mathcal{O}\left( \sum_{i=1}^n g(L_i) i^{\ell -\nicefrac32} \sqrt{\log i} \right), \label{sum t_i}
\ean
almost surely as $n\uparrow\infty$.
For the first sum in \eqref{sum t_i}, note that
\begin{equation}\label{eq:small_bla}
\sum_{i=1}^n g(L_i) i^{\ell -1} =
\sum_{i=1}^n \mathbb E[g(L_i)] i^{\ell -1}+
\sum_{i=1}^n \big(g(L_i)-\mathbb E[g(L_i)]\big) i^{\ell -1}.
\end{equation}
The latter sum is a sum of independent random variables to which we can apply 
Theorem~\ref{th:SLLN_Petrov} with $\alpha=2$ and $a_i = i^{\nicefrac\ell2}$. 
Indeed,
\[\sum_{i=1}^{\infty} \mathbb E\big[(g(L_i)-\mathbb E[g(L_i)])^2\big]i^{2(\ell-1)-\ell} 
= \mathrm{Var}(g(L)) \sum_{i=1}^{\infty} \frac1{i^{\ell-2}}
<\infty,
\]
because $\ell-2<-1$, by assumption on $\ell$.
Thus, Theorem~\ref{th:SLLN_Petrov} implies that, almost surely as $n\uparrow\infty$
\[\sum_{i=1}^n \big(g(L_i)-\mathbb E[g(L_i)]\big) i^{\ell -1} = o(n^{\nicefrac\ell2}),\]
and thus, by~\eqref{eq:small_bla},
\[\sum_{i=1}^n g(L_i) i^{\ell -1} =
\sum_{i=1}^n \mathbb E[g(L_i)] i^{\ell -1}+o(n^{\nicefrac\ell2})
= \mathbb E[g(L)] \sum_{i=1}^n i^{\ell -1} + o(n^{\nicefrac\ell2}).\]
Now note that
\[\frac{(n+1)^{\ell}-1}\ell = \int_1^{n+1} x^{\ell-1}\mathrm dx\leq \sum_{i=1}^n i^{\ell -1}
\leq \int_0^n x^{\ell-1}\mathrm dx = \frac{n^\ell}{\ell},
\]
which implies that, as $n\uparrow\infty$,
\[\sum_{i=1}^n i^{\ell -1}  = \frac{n^{\ell}}{\ell} +o(n^{\nicefrac\ell2}).\]
We thus get that, almost surely as $n\uparrow\infty$,
\begin{equation}\label{eq:small_g(L)}
\sum_{i=1}^n g(L_i)i^{\ell -1} = \frac{\mathbb E[g(L)]}{\ell}\cdot n^{\ell} + o(n^{\nicefrac\ell2}).
\end{equation} 
For the second sum in~\eqref{sum t_i}, 
we proceed similarly: first note that, for all $\varepsilon>0$,
\[\sum_{i=1}^n g(L_i) i^{\ell -3/2} \sqrt{\log i} =
\mathcal O\bigg(\sum_{i=1}^n g(L_i) i^{\ell -3/2+\varepsilon}\bigg).\]
We use Theorem~\ref{th:SLLN_Petrov} again, with $\alpha = 2$ and $a_n = n^{\varepsilon}$.
Indeed, we have
\[\sum_{i\geq 1} \big(g(L_i)-\mathbb E[g(L_i)]\big) i^{2\ell -3+2\varepsilon-2\varepsilon}
= \mathrm{Var}(g(L)) \sum_{i\geq 1} i^{2\ell -3}<\infty,
\]
because $2\ell-3<-1$ by assumption on $\ell$.
Theorem~\ref{th:SLLN_Petrov} thus implies that
\[\sum_{i=1}^n g(L_i) i^{\ell -3/2+\varepsilon} 
= \mathbb E[g(L)] \sum_{i=1}^n i^{\ell - 3/2+\varepsilon} + o(n^{2\varepsilon}).
\]
We now choose $\varepsilon$ such that $0<\varepsilon<\min(\ell-\frac12, \frac\ell4, \frac{(1-\ell)}2)$.
Using the fact that
\[0\leq \sum_{i=1}^n i^{\ell -3/2+\varepsilon}\leq \int_1^{n+1} x^{\ell -3/2+\varepsilon}\mathrm dx
= \mathcal O(n^{\ell-\nicefrac12+\varepsilon}),
\]
we get that 
\[\sum_{i=1}^n g(L_i) i^{\ell -3/2+\varepsilon} 
= \mathcal O(n^{\ell-\nicefrac12+\varepsilon}) + o(n^{2\varepsilon})
= o(n^{\nicefrac\ell2}),\]
because, with our choice of $\varepsilon$, $\ell-\nicefrac12+\varepsilon<\nicefrac\ell2$ and $2\varepsilon<\nicefrac\ell2$.
We have thus proved that, almost surely as $n\uparrow\infty$,
\[\sum_{i=1}^n g(L_i) i^{\ell -3/2} \sqrt{\log i} = o(n^{\nicefrac\ell2}).\]
Together with~\eqref{sum t_i} and~\eqref{eq:small_g(L)}, this gives that, almost surely as $n\uparrow\infty$,
\[\sum_{i=1}^n g(L_i)T_{i}^{\ell -1} 
= \frac{\mathbb E[g(L)]}{\ell}\cdot n^{\ell} + o(n^{\nicefrac\ell2}),\]
which concludes the proof of the first claim.

{\bf (iii)} We use the strong law of large numbers,
to get that $T_i\sim i\mathbb E[L]$ almost surely as $i\uparrow\infty$.
This implies
\[\sum_{i=1}^n \frac{g(L_i)}{T_{i}^{\ell}} 
 = \mathcal O\bigg(\frac1{\mathbb E[L]^{\ell}} \sum_{i=1}^n \frac{g(L_i)}{i^{\ell}}\bigg).\]
Now, we write
\[ \sum_{i=1}^n \frac{g(L_i)}{i^{\ell}}
= \mathbb E[g(L)] \sum_{i=1}^n  \frac1{i^{\ell}} + \sum_{i=1}^n \frac{g(L_i)-\mathbb E[g(L)]}{i^{\ell}},
\]
and note that the second sum is a martingale whose quadratic variation satisfies
\[\sum_{i=1}^n \frac{\mathbb E\big[(g(L_i)-\mathbb E[g(L)])^2\big]}{i^{2\ell}} 
= \mathrm{Var}(g(L)) \sum_{i=1}^n \frac1{i^{2\ell}}
\leq \mathrm{Var}(g(L)) \sum_{i=1}^\infty \frac1{i^{2\ell}}<\infty,
\]
because $\ell>1$.
Thus, this martingale converges almost surely as $n\uparrow\infty$, and we get
\[\sum_{i=1}^n \frac{g(L_i)}{ i^{\ell}} 
= \mathbb E[g(L)] \sum_{i=1}^n  \frac1{i^{\ell}} + \mathcal O(1)
= \mathcal O(1),\]
because $\ell>1$. 
This concludes the proof of the second claim.

{\bf (iii)} We proceed similarly to the proof of {\bf (i)} and write that, by the law of the iterated logarithm,
\ba
\sum_{i=1}^n \frac{g(L_i)}{T_i}
&= \sum_{i=1}^n \frac{g(L_i)}{(i\mathbb EL + \mathcal O(\sqrt{i\log i}))}
= \sum_{i=1}^n \frac{g(L_i)}{i\mathbb EL (1 + \mathcal O(\sqrt{(\log i)/i}))}\\
&= \sum_{i=1}^n \frac{g(L_i)}{i\mathbb EL} + \mathcal O\bigg(\sum_{i=1}^n \frac{g(L_i)\sqrt{\log i}}{i^{\nicefrac32}\mathbb EL}\bigg).
\ea
We now use the fact that $\log i$ is negligible in front of any power or $i$ to get that
\[\sum_{i=1}^n \frac{g(L_i)\sqrt{\log i}}{i^{\nicefrac32}\mathbb EL}
= \mathcal O\bigg(\sum_{i=1}^n \frac{g(L_i)}{i^{\nicefrac54}\mathbb EL}\bigg)
= \mathcal O(1),\]
by {\bf (ii)}.
This implies that, almost surely as $n\uparrow\infty$,
\ba
\sum_{i=1}^n \frac{g(L_i)}{T_i}
&= \sum_{i=1}^n \frac{g(L_i)}{i\mathbb EL}  + \mathcal O(1)
= \mathbb E[g(L)]\sum_{i=1}^n \frac{1}{i\mathbb EL} + \sum_{i=1}^n \frac{g(L_i)-\mathbb E[g(L)]}{i\mathbb EL} +\mathcal O(1)\\
&= \frac{\mathbb E[g(L)]}{\mathbb EL}\cdot \log n + \sum_{i=1}^n \frac{g(L_i)-\mathbb E[g(L)]}{i\mathbb EL} +\mathcal O(1).
\ea
Now, the remaining sum is a martingale whose quadratic variation satisfies
\[\sum_{i=1}^n \frac{\mathbb E[(g(L_i)-\mathbb E[g(L)])^1]}{i^2(\mathbb EL)^2}
= \frac{\mathrm{Var}(g(L))}{(\mathbb EL)^2} \sum_{i=1}^n \frac1{i^2}\leq
 \frac{\mathrm{Var}(g(L))}{(\mathbb EL)^2} \sum_{i\geq 1} \frac1{i^2}<\infty.
\]
This implies that the remaining sum converges almost surely as $n\uparrow\infty$, which concludes the proof.
\end{proof}

\subsection{Estimating the sums in~\eqref{eq:approx_by_exp}}\label{sub:small_sums}
The aim of this section is to understand the asymptotic behaviour of $\sum_{i=1}^n {W_i\mathbb E_{\bs L}[F_i]}/{\bar W_i}$ and $\sum_{i=1}^n {W_i\mathbb E_{\bs L}[F^2_i]}/{\bar W_i}$. We do this in two separate lemmas.

\begin{lemma}\label{lem:small_expectation}
Almost surely as $n\to+\infty$,
\[\sum_{i=1}^n \frac{W_i\mathbb E_{\bs L}[F_i]}{\bar W_i}
= \sigma(n) + o(n^{\nicefrac\delta2}).\] 
\end{lemma}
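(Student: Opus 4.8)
The plan is to obtain an exact integral formula for each summand, expand it asymptotically in powers of $T_i^{-(1-\delta)}$, and then sum using Lemma~\ref{lem:sum t_i}. Since $\mu(y)=\frac{\mathrm d}{\mathrm dy}\mathrm e^{\gamma y^\delta}$, integration by parts in~\eqref{eq:exp_F} gives, for every $i$,
\[\frac{W_i\,\mathbb E_{\bs L}[F_i]}{\bar W_i}
=\frac{1}{\bar W_i}\int_0^{L_i}x\,\mu(T_{i-1}+x)\,\mathrm dx
=\frac{L_i\,\mathrm e^{\gamma T_i^\delta}-\int_0^{L_i}\mathrm e^{\gamma (T_{i-1}+x)^\delta}\,\mathrm dx}{\mathrm e^{\gamma T_i^\delta}-1},\]
using $\bar W_i=\mathrm e^{\gamma T_i^\delta}-1$. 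Because $T_i\sim\mathbb E[L]\,i$ a.s.\ (so that $\sum_i L_i\,\mathrm e^{-\gamma T_i^\delta}<\infty$ a.s.\ by Borel--Cantelli and $\mathbb E[L]<\infty$), replacing $(\mathrm e^{\gamma T_i^\delta}-1)^{-1}$ by $\mathrm e^{-\gamma T_i^\delta}$ changes the sum only by an a.s.\ $\mathcal O(1)=o(n^{\nicefrac\delta2})$. After the change of variable $u=L_i-x$, the problem reduces to the asymptotics of $\sum_{i=1}^n\bigl(L_i-\int_0^{L_i}\mathrm e^{\gamma((T_i-u)^\delta-T_i^\delta)}\,\mathrm du\bigr)$.

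Next I would Taylor-expand the exponent: uniformly for $u\in[0,L_i]$ and $i$ large, $(1-u/T_i)^\delta$ is a convergent power series in $u/T_i$, whence $\gamma((T_i-u)^\delta-T_i^\delta)=-\gamma\delta u\,T_i^{\delta-1}+r_i(u)$ with $|r_i(u)|\le C u^2 T_i^{\delta-2}$. Expanding the exponential and integrating in $u$ term by term, for any truncation order $N$,
\[L_i-\int_0^{L_i}\mathrm e^{\gamma((T_i-u)^\delta-T_i^\delta)}\,\mathrm du
=\gamma\delta\sum_{k=0}^{N}\frac{(-\gamma\delta)^k}{(k+2)!}\,\frac{L_i^{\,k+2}}{T_i^{\,(k+1)(1-\delta)}}+E_i^{(N)},\]
where $E_i^{(N)}$ gathers the higher-order Taylor terms (each bounded, since $\mathrm e^{\gamma((T_i-u)^\delta-T_i^\delta)}\le1$, by a constant times $u^{N+2}T_i^{(N+1)(\delta-1)}$) together with the contributions of $r_i$ (each carrying at least one extra factor $T_i^{-1}$). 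Summing the displayed main terms over $i$ via Lemma~\ref{lem:sum t_i}{\bf (i)} with $g(L)=L^{k+2}$ and $\ell=1-(k+1)(1-\delta)=\delta-k(1-\delta)$ produces exactly
\[\gamma\delta\sum_{k}\frac{(-\gamma\delta)^k\,\mathbb E[L^{k+2}]}{(k+2)!\,(\delta-k(1-\delta))\,\mathbb E[L]^{(1-\delta)(k+1)}}\,n^{\,\delta-k(1-\delta)}+o\bigl(n^{(\delta-k(1-\delta))/2}\bigr),\]
that is, $\sigma(n)+o(n^{\nicefrac\delta2})$, provided we retain precisely the indices $k$ with $\delta-k(1-\delta)>\delta/2$, i.e.\ $k\le\lfloor\frac{\delta}{2-2\delta}\rfloor$; the contributions of all larger $k$ are $o(n^{\nicefrac\delta2})$ by parts {\bf (i)}--{\bf (iii)} of Lemma~\ref{lem:sum t_i} (they have $\ell\le\delta/2$, $\ell=0$ or $\ell<0$). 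Taking $N=\lfloor\frac{\delta}{2-2\delta}\rfloor+1$ makes the $T_i$-exponent in the Taylor remainder strictly larger than $1-\delta/2$, so $\sum_i E_i^{(N)}=o(n^{\nicefrac\delta2})$ a.s.\ by the same lemma.

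The delicate point — and the main obstacle — is the error bookkeeping in the last two steps: one must control, term by term and uniformly in $u\in[0,L_i]$, every contribution produced by expanding $\mathrm e^{\gamma((T_i-u)^\delta-T_i^\delta)}$ (the Taylor remainder and the cross-terms involving $r_i$), and verify that after integrating in $u$ and summing in $i$ each is $o(n^{\nicefrac\delta2})$. The subtlety is that a naive use of Lemma~\ref{lem:sum t_i}{\bf (i)}--{\bf (ii)}, which asks for $\mathrm{Var}(g(L))<\infty$, would require moments of $L$ of order roughly $2(k+2)$, exceeding the budget $\mathbb E[L^p]<\infty$ with $p$ as in~\eqref{eq:def_p}; to remain within this budget one should first truncate the run-lengths (split $L_i$ at a fixed or slowly growing level, treat the bounded part with Lemma~\ref{lem:sum t_i} and the tail using only the finiteness of $\mathbb E[L^p]$), and this is where most of the work lies. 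The finitely many small-$i$ terms, for which $T_i$ is not yet large, contribute $\mathcal O(1)$ and are harmless. Combining these pieces yields $\sum_{i=1}^n W_i\mathbb E_{\bs L}[F_i]/\bar W_i=\sigma(n)+o(n^{\nicefrac\delta2})$ almost surely.
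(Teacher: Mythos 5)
Your proposal follows essentially the same route as the paper: integrate by parts to get $W_i\mathbb E_{\bs L}[F_i]/\bar W_i = L_i - \int_0^{L_i}\mathrm e^{-\gamma(T_i^\delta-(T_{i-1}+x)^\delta)}\mathrm dx$, linearise the exponent as $-\gamma\delta u T_i^{\delta-1}$ up to a quadratic correction, expand the exponential into the alternating series $\gamma\delta\sum_k\frac{(-\gamma\delta)^k}{(k+2)!}L_i^{k+2}T_i^{-(k+1)(1-\delta)}$, and sum each retained term with Lemma~\ref{lem:sum t_i}{\bf(i)}; your main-term computation reproduces $\sigma(n)$ exactly. The only difference is cosmetic (you expand the exponential before integrating in $u$, the paper integrates first and then expands $1-\mathrm e^{-\gamma\delta L_iT_i^{\delta-1}}$), and the error terms you defer are precisely the paper's $R_n$ and $R_n'$ (the quadratic correction to the exponent, which your bound $|r_i(u)|\le Cu^2T_i^{\delta-2}$ controls, giving $\mathcal O(1)$ or $\mathcal O(\log n)$ after summation by Lemma~\ref{lem:sum t_i}{\bf(ii)}--{\bf(iii)}) together with the tail of the alternating series (which the paper, like you, bounds by the first omitted term). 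So the bookkeeping you describe as "the main obstacle" is exactly what the paper carries out, and the bounds you state for each piece do close it.

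One remark on the moment issue you raise: the paper does \emph{not} perform the truncation you propose; it applies Lemma~\ref{lem:sum t_i}{\bf(i)} directly with $g(L)=L^k$ for each retained $k$, which as stated requires $\mathrm{Var}(L^k)<\infty$, i.e.\ $\mathbb E[L^{2k}]<\infty$. For $\delta$ close to $1$ the largest retained power is $k=\lfloor\frac{\delta}{2-2\delta}\rfloor+2$, and $2k$ can exceed the $p$ of~\eqref{eq:def_p} (e.g.\ $\delta=0.9$ needs $\mathbb E[L^{12}]$ while $p=11$). So you have not created a gap here — you have identified a point the paper glosses over; either the variance hypothesis in Lemma~\ref{lem:sum t_i} can be relaxed (e.g.\ applying Theorem~\ref{th:SLLN_Petrov} with $\alpha<2$, at the cost of a weaker but still sufficient error term), or a truncation of the kind you sketch is indeed needed. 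With that caveat, your argument is the paper's argument.
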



To prove Lemma~\ref{lem:small_expectation}, 
we need the following consequence of Borel-Cantelli lemma:
\begin{lemma}\label{lem:small_BC}
For all $\delta\in (0,1)$, $L_i/T_{i-1}^{1-\delta}\to 0$ almost surely as $i\uparrow\infty$.
\end{lemma}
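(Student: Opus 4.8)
The plan is to bound the probability of the complementary event and apply the Borel--Cantelli lemma along a suitable deterministic subsequence, then fill in the gaps using monotonicity of $T_{i-1}$.

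First I would fix $\varepsilon>0$ and estimate $\mathbb P(L_i > \varepsilon T_{i-1}^{1-\delta})$. Since $T_{i-1} = \sum_{j=1}^{i-1} L_j$ and the $L_j$ are i.i.d.\ with $\mathbb E[L]<\infty$, the strong law of large numbers gives $T_{i-1}\sim (i-1)\mathbb E[L]$ almost surely, so morally $T_{i-1}^{1-\delta}$ grows like a positive power of $i$ (because $\delta<1$ means $1-\delta>0$). Hence $\varepsilon T_{i-1}^{1-\delta}\to\infty$, and $\mathbb P(L_i>\varepsilon T_{i-1}^{1-\delta})$ should decay. To make this rigorous without circularity (note $L_i$ appears in neither $T_{i-1}$ nor the event's right-hand side, so $L_i$ and $T_{i-1}$ are independent), I would condition on $T_{i-1}$ and write
\[
\mathbb P\big(L_i > \varepsilon T_{i-1}^{1-\delta}\big)
= \mathbb E\Big[\mathbb P\big(L > \varepsilon x^{1-\delta}\big)\big|_{x = T_{i-1}}\Big].
\]
On the event $\{T_{i-1}\geq c\, i\}$ for a fixed $c\in(0,\mathbb E[L])$ — whose complement has summable probability by a standard large-deviation or Chebyshev bound using $\mathbb E[L^p]<\infty$ with $p\geq 2$ — we have $\mathbb P(L>\varepsilon x^{1-\delta})\leq \mathbb P(L > \varepsilon c^{1-\delta} i^{1-\delta})$, and then Markov's inequality at order $p$ gives
\[
\mathbb P\big(L > \varepsilon c^{1-\delta} i^{1-\delta}\big)
\leq \frac{\mathbb E[L^p]}{(\varepsilon c^{1-\delta})^p}\cdot i^{-p(1-\delta)}.
\]
The key point is that $p$ is chosen (see~\eqref{eq:def_p}) so that $p\geq \lfloor 1/(1-\delta)\rfloor + 1 > 1/(1-\delta)$, hence $p(1-\delta)>1$ and the series $\sum_i i^{-p(1-\delta)}$ converges.

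With that bound in hand, Borel--Cantelli gives that almost surely $L_i\leq \varepsilon T_{i-1}^{1-\delta}$ for all but finitely many $i$; that is, $\limsup_i L_i/T_{i-1}^{1-\delta}\leq\varepsilon$ almost surely. Taking $\varepsilon$ along a sequence tending to $0$ (intersecting the corresponding almost-sure events) yields $L_i/T_{i-1}^{1-\delta}\to 0$ almost surely, which is the claim.

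The main obstacle is a purely bookkeeping one: handling the randomness of $T_{i-1}$ in the denominator cleanly. The cleanest route is to separate the bad event $\{T_{i-1}< ci\}$ (summable probability, so negligible by Borel--Cantelli) from the good event, on which $T_{i-1}^{1-\delta}$ is bounded below by a deterministic power of $i$; one must be slightly careful that the exponent $p$ from~\eqref{eq:def_p}, together with $p\geq 8$, is large enough both for the summability $p(1-\delta)>1$ and for the deviation bound on $\{T_{i-1}<ci\}$ to be summable. No step here is genuinely deep; it is just a matter of combining Markov's inequality at a high moment with Borel--Cantelli.
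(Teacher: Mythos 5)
Your proposal follows essentially the same route as the paper: a moment (Markov) bound at the order $p$ from~\eqref{eq:def_p}, chosen so that $p(1-\delta)>1$, then Borel--Cantelli, a linear-in-$i$ lower bound on $T_{i-1}$, and finally $\varepsilon\downarrow 0$ along a countable sequence. The one place where your write-up differs from the paper, and where it is imprecise as stated, is the treatment of the bad event $\{T_{i-1}<ci\}$: you ask for its probability to be summable ``by a standard large-deviation or Chebyshev bound using $\mathbb E[L^p]<\infty$ with $p\geq 2$'', but a plain second-moment Chebyshev bound only gives $\mathbb P(T_{i-1}<ci)=\mathcal O(1/i)$ for $c<\mathbb E[L]$, which is not summable. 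To make your good/bad splitting work you would need a $p$-th moment bound for the centred sum (Rosenthal or Marcinkiewicz--Zygmund), which yields $\mathcal O(i^{-p/2})$ and is indeed summable since $p\geq 8$; this is a legitimate fix, but it imports a nontrivial inequality. The paper sidesteps this entirely: it invokes the strong law of large numbers to get the almost sure statement that $T_{i-1}\geq i\,\mathbb E[L]/2$ for all $i$ large enough, and then applies the first Borel--Cantelli lemma only to the events $\{L_i>\varepsilon (i\,\mathbb E[L]/2)^{1-\delta}\}$, whose thresholds are deterministic; no summability of the bad event is needed, only its almost sure eventual non-occurrence, and on the intersection of the two almost sure events one concludes $L_i\leq \varepsilon T_{i-1}^{1-\delta}$ eventually. (Note also that the first Borel--Cantelli lemma needs no independence, so the dependence of $\{L_i>\varepsilon T_{i-1}^{1-\delta}\}$ across $i$ is not an obstacle in either version.) With the deviation step patched in one of these two ways, your argument is correct.
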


\begin{proof}
First note that, by the strong law of large numbers, 
almost surely for all $i$ large enough,
$T_{i-1}\geq i\mathbb EL/2$.
We take $p$ as in~\eqref{eq:def_p} (recall that, in this section, we assume that $\delta\in(0,1)$).
Fix $\varepsilon>0$;
by Markov inequality, for all $i\geq 1$,
\[\mathbb P(L_i>\varepsilon (2i\mathbb EL)^{1-\delta})
\leq \frac{\mathbb E[L^p]}{\varepsilon^p(i\mathbb EL/2)^{(1-\delta)p}}.\]
By definition of $p$ (see~\eqref{eq:def_p}), $(1-\delta)p>1$.
and thus, $\sum_{i\geq 1} \mathbb P(L_i>\varepsilon(2i\mathbb EL)^{1-\ell})<\infty$.
By Borel-Cantelli lemma (because the $(L_i)_{i\geq 1}$ is a sequence of independent random variables), almost surely for all $i$ large enough, $L_i\leq \varepsilon (2i\mathbb EL)^{1-\ell}$,
which concludes the proof (recall that $T_{i-1}\geq i\mathbb EL/2$ almost surely for all $i$ enough).
\end{proof}

\begin{proof}[Proof of Lemma~\ref{lem:small_expectation}]
By~\eqref{eq:exp_F}, for all $i\geq 1$,
\ba
W_i\mathbb E_{\bs L} [F_i]
&= \int_0^{L_i} \gamma\delta x(T_{i-1}+x)^{\delta-1} \mathrm e^{\gamma (T_{i-1}+x)^\delta}\mathrm dx
= L_i \mathrm e^{\gamma T_i^\delta}
- \int_0^{L_i} \mathrm e^{\gamma (T_{i-1}+x)^\delta}\mathrm dx,
\ea
by integration by parts.
Setting $u = (T_{i-1}+x)^\delta$, we get
\ba
W_i\mathbb E_{\bs L} [F_i]
&= L_i \mathrm e^{\gamma T_i^\delta}
- \int_{T_{i-1}^\delta}^{T^{\delta}_i} \frac1\delta u^{\frac1\delta-1} \mathrm e^{\gamma u}\mathrm du
= L_i \mathrm e^{\gamma T_i^\delta}
- \frac{T_i^{1-\delta}}{\delta}\int_{T_{i-1}^\delta}^{T^{\delta}_i} \mathrm e^{\gamma u}\mathrm du
+  \frac1\delta\int_{T_{i-1}^\delta}^{T^{\delta}_i}  \big[T_i^{1-\delta} -u^{\frac1\delta-1}\big] \mathrm e^{\gamma u}\mathrm du\\
&= L_i \mathrm e^{\gamma T_i^\delta}
- \frac{T_i^{1-\delta}}{\gamma\delta}\big(\mathrm e^{\gamma T_i^{\delta}}-\mathrm e^{\gamma T_{i-1}^{\delta}}\big)
+  \frac1\delta\int_{T_{i-1}^\delta}^{T^{\delta}_i}  \big[T_i^{1-\delta} -u^{\frac1\delta-1}\big] \mathrm e^{\gamma u}\mathrm du
\ea
thus,
\begin{equation}\label{eq:small_first_appr}
\sum_{i=1}^n\frac{W_i\mathbb E_{\bs L} [F_i]}{\bar W_i}
= T_n  
- \sum_{i=1}^n \frac{T_i^{1-\delta}}{\gamma\delta}\big(1-\mathrm e^{-\gamma (T_i^{\delta}-T_{i-1}^{\delta})}\big) 
+ R_n,
\end{equation}
where we have set
\[R_n = \sum_{i=1}^n \frac1\delta\int_{T_{i-1}^\delta}^{T^{\delta}_i}  \big[T_i^{1-\delta} -u^{\frac1\delta-1}\big] \mathrm e^{\gamma u}\mathrm du.\]
We first show that, in probability as $n\uparrow\infty$,
$R_n = o(n^{\nicefrac\delta2})$. 
Indeed, we have $R_n\geq 0$, and
\ba
R_n &\leq \sum_{i=1}^n \big(T_i^{1-\delta} -T_{i-1}^{1-\delta}\big)
\sum_{i=1}^n \frac1\delta\int_{T_{i-1}^\delta}^{T^{\delta}_i} \mathrm e^{-\gamma (T_i^\delta - u)}\mathrm du
= \sum_{i=1}^n T_i^{1-\delta}\Big(1-\Big(1-\frac{L_i}{T_i}\Big)^{1-\delta}\Big)
\big(1-\mathrm e^{-\gamma (T_i^\delta -T_{i-1}^{\delta})}\big)\\
& =\sum_{i=1}^n T_i^{1-\delta}\Big(1-\Big(1-\frac{L_i}{T_i}\Big)^{1-\delta}\Big)
\big(1-\mathrm e^{-\gamma T_i^{\delta} (1 -(1-L_i/T_i)^{\delta})}\big)
\ea
As $i\uparrow\infty$, $L_i/T_i \to 0$ almost surely (see Lemma~\ref{lem:small_BC}), 
and thus
\[T_i^{1-\delta}\Big(1-\Big(1-\frac{L_i}{T_i}\Big)^{1-\delta}\Big)
\big(1-\mathrm e^{-\gamma T_i^{\delta} (1 -(1-L_i/T_i)^{\delta})}\big)
\sim (1-\delta)L_iT_i^{-\delta} \big(1-\mathrm e^{-\gamma\delta L_i T_i^{\delta-1}}\big)
\sim \frac{\gamma\delta(1-\delta)L^2_i}{T_i},
\]
because $\delta<1$ and thus $L_iT_i^{\delta -1} \to 0$ almost surely as $i\uparrow\infty$ (by Lemma~\ref{lem:small_BC}).
Because, by Lemma~\ref{lem:sum t_i},
\[\sum_{i=1}^n \frac{\gamma\delta(1-\delta)L^2_i}{T_i} = \mathcal O(\log n),\]
we get that, almost surely as $n\uparrow\infty$
\[R_n = \mathcal O(\log n) = o(n^{\nicefrac\delta2}),\]
as needed.
We now look at the second term in~\eqref{eq:small_first_appr}: we write
\[\sum_{i=1}^n \frac{T_i^{1-\delta}}{\gamma\delta}\big(1-\mathrm e^{-\gamma (T_i^{\delta}-T_{i-1}^{\delta})}\big) 
= \sum_{i=1}^n \frac{T_i^{1-\delta}}{\gamma\delta}\big(1-\mathrm e^{-\gamma\delta L_i T_i^{\delta-1}}\big)
+ R'_n,\]
where
\[R'_n = \sum_{i=1}^n \frac{T_i^{1-\delta}}{\gamma\delta}\big(\mathrm e^{-\gamma\delta L_i T_i^{\delta-1}}-\mathrm e^{-\gamma (T_i^{\delta}-T_{i-1}^{\delta})}\big).\]
First note that, for all $x\in [0,1]$, $(1-x)^\delta \leq 1-\delta x$. This implies
\[T_i^{\delta}-T_{i-1}^{\delta} 
= T_i^{\delta} \Big(1-\Big(1-\frac{L_i}{T_i}\Big)^{\delta}\Big)
\geq \delta L_i T_i^{\delta-1},\]
implying that $R'_n\geq 0$ for all $n\geq 1$.
We now prove that $R'_n = o(n^{\nicefrac\delta2})$ almost surely as $n\uparrow\infty$.
To do so, we use the fact that, for all $x\in[0,1]$, $(1-x)^\delta \leq 1-\delta x-\delta(1-\delta)x^2/2$, and thus
\[T_i^{\delta}-T_{i-1}^{\delta} \leq \delta L_i T_i^{\delta-1} + \delta(1-\delta)L_i^2 T_i^{\delta-2}/2,
\]
which, in turn, implies
\ba
R'_n
&\leq \sum_{i=1}^n \frac{T_i^{1-\delta}}{\gamma\delta}\mathrm e^{-\gamma\delta L_i T_i^{\delta-1}}\big(1-\mathrm e^{-\gamma\delta(1-\delta)L_i^2 T_i^{\delta-2}/2}\big)
\leq \sum_{i=1}^n \frac{T_i^{1-\delta}}{\gamma\delta}
\big(1-\mathrm e^{-\gamma\delta(1-\delta)L_i^2 T_i^{\delta-2}/2}\big)\\
&\leq \sum_{i=1}^n \frac{(1-\delta) L_i^2}{2 T_i} = \mathcal O(\log n),
\ea
by Lemma~\ref{lem:sum t_i}.
In total, using the fact that $R_n = o(n^{\nicefrac\delta2})$ and $R'_n = o(n^{\nicefrac\delta2})$ in \eqref{eq:small_first_appr}, we get that, almost surely as $n\uparrow\infty$,
\ba\sum_{i=1}^n\frac{W_i\mathbb E_{\bs L} [F_i]}{\bar W_i}
&= T_n  
- \sum_{i=1}^n \frac{T_i^{1-\delta}}{\gamma\delta}\big(1-\mathrm e^{-\gamma\delta L_i T_i^{\delta-1}}\big) + o(n^{\nicefrac\delta2})\\
&=  \sum_{i=1}^n \frac{T_i^{1-\delta}}{\gamma\delta}\big(\mathrm e^{-\gamma\delta L_i T_i^{\delta-1}}-1+\gamma\delta L_i T_i^{\delta-1}\big) + o(n^{\nicefrac\delta2})\\
&= \sum_{i=1}^n \frac{T_i^{1-\delta}}{\gamma\delta}\sum_{k\geq 2}\frac{(-\gamma\delta L_i T_i^{\delta-1})^k}{k!}+ o(n^{\nicefrac\delta2})
= \sum_{k\geq 2}\frac{(-\gamma\delta)^k}{\gamma\delta k!} \sum_{i=1}^n \frac{L_i^k}{T_i^{(1-\delta)(k-1)}}+ o(n^{\nicefrac\delta2})
\ea
Now note that, for all $k\geq k_0 = \lfloor\frac{\nicefrac32-\delta}{1-\delta}\rfloor+1$, 
\[\sum_{i=1}^n \frac{L_i^k}{T_i^{(1-\delta)(k-1)}}
\leq \sum_{i=1}^n \frac{L_i^{k_0}}{T_i^{(1-\delta)(k_0-1)}}=o(n^{\nicefrac\delta2}),\]
almost surely as $n\uparrow\infty$.
Furthermore, because of the alternating signs,
\[\left|\sum_{k\geq k_0}\frac{(-\gamma\delta)^k}{\gamma\delta k!} \sum_{i=1}^n \frac{L_i^k}{T_i^{(1-\delta)(k-1)}}\right|
\leq \left|\frac{(-\gamma\delta)^{k_0}}{\gamma\delta k_0!} \sum_{i=1}^n \frac{L_i^{k_0}}{T_i^{(1-\delta)(k_0-1)}}\right| =o(n^{\nicefrac\delta2}),\]
almost surely.
This gives
\ba
\sum_{i=1}^n\frac{W_i\mathbb E_{\bs L} [F_i]}{\bar W_i}
&= \sum_{k= 2}^{k_0-1}\frac{(-\gamma\delta)^k}{\gamma\delta k!} \sum_{i=1}^n \frac{L_i^k}{T_i^{(1-\delta)(k-1)}}+ o(n^{\nicefrac\delta2})\\
&= \sum_{k= 2}^{k_0-1}\frac{(-\gamma\delta)^k\mathbb E[L^k]}{\gamma\delta \mathbb E[L]^{(1-\delta)(k-1)}k!} \cdot\frac{n^{(1-\delta)(k-1)+1}}{(1-\delta)(k-1)+1}+ o(n^{\nicefrac\delta2}),
\ea
which concludes the proof, by definition of $\sigma(n)$ (see~\eqref{eq:def_sigma_n}).
\end{proof}

\begin{lemma}\label{lem:small_sec_moment} 
Almost surely as $n\to \infty$,
\[\sum_{i=1}^n \frac{\mathbb E_{\bold{L}}[F_i^2] W_i}{\bar W_i} 
= \frac{\gamma\mathbb E[L^3]}{3\mathbb E[L]^{1-\delta}} \cdot n^{\delta} +o(n^{\delta}).\]
\end{lemma}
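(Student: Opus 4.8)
The plan is to follow the strategy of the proof of Lemma~\ref{lem:small_expectation}, working conditionally on $\bs L$. Starting from~\eqref{eq:def_F} and~\eqref{eq:def_W}, we have $W_i\mathbb E_{\bs L}[F_i^2] = \int_0^{L_i} x^2\mu(T_{i-1}+x)\,\mathrm dx$, and since $\mu(T_{i-1}+x) = \frac{\mathrm d}{\mathrm dx}\,\mathrm e^{\gamma(T_{i-1}+x)^\delta}$, integration by parts gives
\[W_i\mathbb E_{\bs L}[F_i^2] = L_i^2\,\mathrm e^{\gamma T_i^\delta} - 2\int_0^{L_i} x\,\mathrm e^{\gamma(T_{i-1}+x)^\delta}\,\mathrm dx.\]
Recall also that $\bar W_i = \mathrm e^{\gamma T_i^\delta}-1$; since $L_i/T_{i-1}^{1-\delta}\to0$ almost surely by Lemma~\ref{lem:small_BC}, one has $T_i^\delta-T_{i-1}^\delta\to0$, hence $\mathrm e^{\gamma T_{i-1}^\delta}/\bar W_i\to1$ almost surely, and it is enough to analyse $W_i\mathbb E_{\bs L}[F_i^2]/\mathrm e^{\gamma T_{i-1}^\delta}$.

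The key step is a uniform second-order expansion of the integrand. Writing $c_i = \gamma\delta T_{i-1}^{\delta-1}$, Taylor's theorem gives $\gamma(T_{i-1}+x)^\delta = \gamma T_{i-1}^\delta + c_ix + O(T_{i-1}^{\delta-2}x^2)$, and since $c_iL_i\to0$ almost surely one obtains, uniformly for $x\in[0,L_i]$ and for $i$ large,
\[\mathrm e^{\gamma(T_{i-1}+x)^\delta} = \mathrm e^{\gamma T_{i-1}^\delta}\Big(1+c_ix+\tfrac12c_i^2x^2+O(c_i^3x^3)+O(T_{i-1}^{\delta-2}x^2)\Big).\]
Plugging this into the integration-by-parts formula, the terms of order $L_i^2$ cancel exactly, the terms of order $c_iL_i^3 = \gamma\delta L_i^3T_{i-1}^{\delta-1}$ combine to $\tfrac13\gamma\delta L_i^3T_{i-1}^{\delta-1}$, and the remainder is of order $c_i^2L_i^4 \asymp L_i^4T_{i-1}^{2\delta-2}$, $c_i^3L_i^5 = o(L_i^3T_{i-1}^{\delta-1})$, and $L_i^4T_{i-1}^{\delta-2}$. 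Dividing by $\bar W_i$ and using $\mathrm e^{\gamma T_{i-1}^\delta}/\bar W_i = 1+o(1)$, this yields
\[\frac{W_i\mathbb E_{\bs L}[F_i^2]}{\bar W_i} = \frac{\gamma\delta}{3}\cdot\frac{L_i^3}{T_{i-1}^{1-\delta}} + \text{(error terms of the above orders)}.\]

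Finally I would sum over $i=1,\dots,n$. Replacing $T_{i-1}$ by $T_i$ in the main term costs $\mathcal O\big(\sum_i L_i^4 T_i^{-(2-\delta)}\big) = \mathcal O(1)$ by Lemma~\ref{lem:sum t_i}{\bf(ii)} (since $2-\delta>1$), after which Lemma~\ref{lem:sum t_i}{\bf(i)}, applied with $g(x)=x^3$ and $\ell=\delta$ (which needs $\mathbb E[L^6]<\infty$, granted as $\mathbb E[L^8]<\infty$), gives $\frac{\gamma\delta}{3}\sum_{i=1}^n L_i^3T_i^{-(1-\delta)} = \frac{\gamma\mathbb E[L^3]}{3\mathbb E[L]^{1-\delta}}n^\delta + o(n^{\nicefrac\delta2})$, which is the claimed leading term. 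For the errors: $\sum_i L_i^4 T_i^{2\delta-2} = \mathcal O(n^{2\delta-1}) = o(n^\delta)$ by Lemma~\ref{lem:sum t_i}{\bf(i)} with $g(x)=x^4$, $\ell=2\delta-1\in(0,1)$ (needing $\mathbb E[L^8]<\infty$, and using $\delta<1$); $\sum_i c_i^3L_i^5 = o\big(\sum_i \gamma\delta L_i^3T_{i-1}^{\delta-1}\big) = o(n^\delta)$ since $c_iL_i\to0$ almost surely; $\sum_i L_i^4T_i^{\delta-2} = \mathcal O(1)$ by Lemma~\ref{lem:sum t_i}{\bf(ii)}; and the $o(1)$ factor coming from $\mathrm e^{\gamma T_{i-1}^\delta}/\bar W_i\to1$, multiplied by the main summand, contributes $o(n^\delta)$ by a Ces\`aro argument. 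The main obstacle, as in Lemma~\ref{lem:small_expectation}, is the bookkeeping of these error terms --- in particular getting the exact cancellation of the $L_i^2$-order terms so that the surviving leading term is $\tfrac13\gamma\delta L_i^3T_{i-1}^{\delta-1}$, and checking that all the remaining exponents of $n$ ($2\delta-1$, and those arising from higher-order Taylor terms) are strictly smaller than $\delta$ on the range $\delta\in(\nicefrac12,1)$.
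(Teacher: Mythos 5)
Your proposal is correct and takes essentially the same route as the paper: integration by parts, expansion of the exponential factor to extract the leading summand $\tfrac{\gamma\delta}{3}L_i^3T_i^{\delta-1}$, and summation via Lemma~\ref{lem:sum t_i}(i); the only difference is bookkeeping (you Taylor-expand to second order around $T_{i-1}$ and sum explicit error terms, needing $\mathbb{E}[L^8]<\infty$, while the paper expands around $T_i$ after the substitution $u=L_i-x$ and bounds the remainder $r_i$ by a monotonicity argument). All the moment conditions you invoke are available under the assumptions of Theorem~\ref{th:CLT_small}, so the argument goes through.
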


\begin{proof}
By~\eqref{eq:exp_F}, for all $i\geq 1$,
\ba
W_i\mathbb E_{\bs L} [F_i^2]
&= \int_0^{L_i} \gamma\delta x^2(T_{i-1}+x)^{\delta-1} \mathrm e^{\gamma (T_{i-1}+x)^\delta}\mathrm dx
= L^2_i \mathrm e^{\gamma T_i^\delta}
- \int_0^{L_i} 2x\mathrm e^{\gamma (T_{i-1}+x)^\delta}\mathrm dx,
\ea
by integration by parts.
This implies
\ba\frac{W_i\mathbb E_{\bs L} [F_i^2]}{\bar W_i}
&= L^2_i - \int_0^{L_i} 2x\mathrm e^{-\gamma (T_i^\delta - (T_{i-1}+x)^\delta)}\mathrm dx
= L^2_i - \int_0^{L_i} 2x\mathrm e^{-\gamma T_i^\delta (1 - (1-(L_i-x)/T_i)^\delta)}\mathrm dx\\
&= L^2_i - \int_0^{L_i} 2(L_i-u)\mathrm e^{-\gamma T_i^\delta (1 - (1-\nicefrac u{T_i})^\delta)}\mathrm du
= L^2_i - \int_0^{L_i} 2(L_i-u)(1-\gamma\delta u T_i^{\delta-1})\mathrm du
+ r_i,
\ea
where we have set 
\[r_i = \int_0^{L_i} 2(L_i-u)\big(1-\gamma\delta u T_i^{\delta-1}-\mathrm e^{-\gamma T_i^\delta (1 - (1-\nicefrac u{T_i})^\delta)}\big)\mathrm du.\]
Thus,
\[\frac{W_i\mathbb E_{\bs L} [F_i^2]}{\bar W_i}
= \gamma\delta T_i^{\delta-1} \int_0^{L_i} 2(L_i-u) u\mathrm du
+ r_i
= \frac{\gamma\delta L_i^3}{3T_i^{1-\delta}} + r_i,\]
which, by Lemma~\ref{lem:sum t_i}, implies
\[\sum_{i=1}^n \frac{W_i\mathbb E_{\bs L} [F_i^2]}{\bar W_i}
= \frac{\gamma \mathbb E[L^3]}{2(\mathbb EL)^{1-\delta}}\cdot n^\delta + o(n^{\nicefrac\delta2}) + \sum_{i=1}^n r_i.\]
Thus, it only remains to prove that $\sum_{i=1}^n r_i = o(n^\delta)$ almost surely as $n\uparrow\infty$.
First note that, almost surely for all $i\geq 1$, $\varpi_i : u\mapsto \mathrm e^{-\gamma T_i^\delta (1 - (1-\nicefrac u{T_i})^\delta)}+\gamma\delta u T_i^{\delta-1}-1$ is non-decreasing on $[0,L_i]$. 
Indeed, we have, uniformly in $u\in [0,L_i]$, because $L_i/T_i\to 0$ almost surely as $i\uparrow\infty$ (see Lemma~\ref{lem:small_BC}),
\ba
\varpi'_i(u) 
&= \gamma\delta T_i^{\delta-1}\Big(1-\Big(1-\frac u{T_i}\Big)^{\delta-1}\mathrm e^{-\gamma T_i^\delta (1 - (1-\nicefrac u{T_i})^\delta)}\Big)
\sim \gamma\delta T_i^{\delta-1}\Big(1-\Big(1+\frac{(1-\delta)u}{T_i}\Big)\Big(1-\frac{\gamma\delta u}{T_i^{}1-\delta}\Big)\\
&\sim (\gamma\delta)^2 u,
\ea
almost surely as $i\uparrow\infty$.
Thus, almost surely for all $i$ large enough, $\varpi_i$ is non-decreasing on $[0, L_i]$ as claimed.
Because $\varpi_i(0) = 0$, this implies that $0\leq \varpi_i(u)\leq \varpi_i(L_i)$ for all $u\in [0,L_i]$.
This implies that
\[|r_i| \leq \varpi_i(L_i) \int_0^{L_i} 2(L_i-u)\mathrm du
= L_i^2 \varpi_i(L_i)
= L_i^2 \mathrm e^{-\gamma T_i^\delta (1 - (1-\nicefrac{L_i}{T_i})^\delta)}+\gamma\delta L_iT_i^{\delta-1}-1.\]
Because $L_i/T_i^{\delta-1}\to 0$ almost surely as $i\uparrow\infty$ (see Lemma~\ref{lem:small_BC}), we get that, almost surely as $i\uparrow\infty$,
\[|r_i| \leq o(L_i^3 T_i^{\delta -1}),\]
which implies $\sum_{i=1}^n r_i = o(\sum_{i=1}^n L_i^3 T_i^{\delta -1}) = o(n^\delta)$, by Lemma~\ref{lem:sum t_i}, as claimed. This concludes the proof.
\end{proof}

\subsection{Making~\eqref{eq:approx_by_exp} rigorous}
\label{sub:approx_by_exp}

The aim of this section is to prove the following lemma:
\begin{lemma}\label{expectation and variance approximation} 
Almost surely as $n\to \infty$,  \[
\sum_{i=1}^n \frac{F_i W_i}{\bar W_i} 
= \sum_{i=1}^n \frac{\mathbb E_{\bf L}[F_i] W_i}{\bar W_i}  + o(n^{\delta/2}),
\]  
and  
\[\sum_{i=1}^n \frac{F_i^2 W_i}{\bar W_i}\left( 1-\frac{W_i}{\bar W_i}\right) 
=  \sum_{i=1}^n \frac{\mathbb E_{\bf L}[F_i^2] W_i}{\bar W_i} +o(n^{\delta}).
\] 
\end{lemma}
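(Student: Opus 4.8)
The plan is to reason conditionally on $\mathbf L=(L_i)_{i\ge1}$ throughout, since conditionally on $\mathbf L$ the random variables $(F_i)_{i\ge1}$ are independent, the weights $W_i/\bar W_i$ are deterministic, and $0\le F_i\le L_i$ almost surely. Both identities then reduce to the statement that a sum of independent, conditionally centred random variables is negligible, since
\[
\sum_{i=1}^n \frac{F_iW_i}{\bar W_i}-\sum_{i=1}^n \frac{\mathbb E_{\mathbf L}[F_i]W_i}{\bar W_i}
=\sum_{i=1}^n \frac{W_i}{\bar W_i}\big(F_i-\mathbb E_{\mathbf L}[F_i]\big),
\]
and, after splitting off the extra factor,
\[
\sum_{i=1}^n \frac{F_i^2W_i}{\bar W_i}\Big(1-\frac{W_i}{\bar W_i}\Big)-\sum_{i=1}^n \frac{\mathbb E_{\mathbf L}[F_i^2]W_i}{\bar W_i}
=\sum_{i=1}^n \frac{W_i}{\bar W_i}\big(F_i^2-\mathbb E_{\mathbf L}[F_i^2]\big)-\sum_{i=1}^n \frac{F_i^2W_i^2}{\bar W_i^2}.
\]
To the first sum on the right of each display I would apply Theorem~\ref{th:SLLN_Petrov} conditionally on $\mathbf L$ (with $\alpha=2$, and $a_n=n^{\delta/2}$ in the first case, $a_n=n^{\delta}$ in the second), after checking that its summability hypothesis holds for $\mathbf L$ in a set of full probability; the last sum in the second display is handled deterministically given $\mathbf L$ via Lemma~\ref{lem:sum t_i}.

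The one computational ingredient is a pointwise bound on $W_i/\bar W_i$. Since $\int_0^t\mu=\mathrm e^{\gamma t^\delta}-1$, one has $\bar W_i=\mathrm e^{\gamma T_i^\delta}-1$ and $W_i=\mathrm e^{\gamma T_i^\delta}-\mathrm e^{\gamma T_{i-1}^\delta}$, so
\[
\frac{W_i}{\bar W_i}=\frac{1-\mathrm e^{-\gamma(T_i^\delta-T_{i-1}^\delta)}}{1-\mathrm e^{-\gamma T_i^\delta}}
\le \frac{\gamma\big(T_i^\delta-T_{i-1}^\delta\big)}{1-\mathrm e^{-\gamma T_i^\delta}}
\le 2\gamma\delta\,L_i\,T_{i-1}^{\delta-1}
\]
for all $i$ large enough, almost surely; here one uses concavity of $x\mapsto x^\delta$ (recall $\delta<1$) for the numerator and the fact that the denominator tends to $1$. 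Combining this with $T_{i-1}\ge c\,i$ for large $i$ (strong law of large numbers, $c=\mathbb E[L]/4$ say) gives $W_i/\bar W_i\le C\,L_i\,i^{\delta-1}$ for large $i$, with $C$ deterministic.

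For the first identity, this bound together with $0\le F_i\le L_i$ gives $\mathbb E_{\mathbf L}\big[(W_i/\bar W_i)^2(F_i-\mathbb E_{\mathbf L}[F_i])^2\big]\le C^2L_i^4\,i^{2\delta-2}$, so that
\[
\sum_{i\ge1}\frac1{i^{\delta}}\,\mathbb E_{\mathbf L}\Big[\Big(\tfrac{W_i}{\bar W_i}\Big)^{\!2}\big(F_i-\mathbb E_{\mathbf L}[F_i]\big)^2\Big]
\le \mathrm{const}+C^2\sum_{i\ge1}L_i^4\,i^{\delta-2},
\]
whose expectation equals $C^2\,\mathbb E[L^4]\sum_{i\ge1}i^{\delta-2}<\infty$ because $\delta<1$; hence this series is almost surely finite and Theorem~\ref{th:SLLN_Petrov} with $a_n=n^{\delta/2}$ yields $\sum_{i=1}^n (W_i/\bar W_i)(F_i-\mathbb E_{\mathbf L}[F_i])=o(n^{\delta/2})$ conditionally a.s., hence a.s. For the second identity, running the same argument for $F_i^2-\mathbb E_{\mathbf L}[F_i^2]$ (now $|F_i^2-\mathbb E_{\mathbf L}[F_i^2]|\le L_i^2$, so the conditional second moment is $\le C^2L_i^6 i^{2\delta-2}$ and $\sum_i C^2L_i^6 i^{2\delta-2}/i^{2\delta}=C^2\sum_i L_i^6 i^{-2}$ has finite expectation since $\mathbb E[L^6]<\infty$) shows the first sum is $o(n^{\delta})$; for the remaining sum, $F_i^2\le L_i^2$ and $(W_i/\bar W_i)^2\le 4\gamma^2\delta^2 L_i^2 T_{i-1}^{2\delta-2}$ give $\sum_{i=1}^n F_i^2W_i^2/\bar W_i^2\le\mathrm{const}+4\gamma^2\delta^2\sum_{i=1}^n L_i^4 T_{i-1}^{2\delta-2}$, which, since $T_{i-1}\sim T_i$ almost surely, is $O(n^{2\delta-1})=o(n^{\delta})$ by the first part of Lemma~\ref{lem:sum t_i} (with $g(x)=x^4$ and $\ell=2\delta-1\in(0,1)$, using $\mathbb E[L^8]<\infty$) and $\delta<1$. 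Adding the two estimates gives the second identity.

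The only real difficulty is this exponent bookkeeping, and it is exactly where the hypotheses are used: one needs $\sum_i L_i^4 i^{\delta-2}$ and $\sum_i L_i^6 i^{-2}$ to converge (hence $\delta<1$ and $\mathbb E[L^6]<\infty$) and the dropped term $\sum_i L_i^4 T_{i-1}^{2\delta-2}=O(n^{2\delta-1})$ to be $o(n^{\delta})$ (hence again $\delta<1$, and $\mathbb E[L^8]<\infty$ through Lemma~\ref{lem:sum t_i}). Passing from the conditional-on-$\mathbf L$ conclusions to the unconditional almost sure ones is routine, since each of them holds for $\mathbf L$ in a set of full probability.
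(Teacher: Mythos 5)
Your proposal is correct and follows essentially the same route as the paper: the same decomposition of both sums, the same key bound $W_i/\bar W_i=\mathcal O(L_iT_{i}^{\delta-1})$, and the same tools, namely Theorem~\ref{th:SLLN_Petrov} applied conditionally on $\mathbf L$ to the centred terms and Lemma~\ref{lem:sum t_i} for the deterministic-in-$\mathbf L$ sums. The only cosmetic difference is that you dispose of $\sum_i F_i^2W_i^2/\bar W_i^2$ directly via $F_i^2\le L_i^2$ and Lemma~\ref{lem:sum t_i}{\rm(i)}, whereas the paper centres $F_i^2$ and invokes Theorem~\ref{th:SLLN_Petrov} twice more; both yield the required $o(n^{\delta})$.
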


Before proving this lemma, note that Lemmas~\ref{lem:small_expectation}, \ref{lem:small_sec_moment}, and~\ref{expectation and variance approximation} together give that, almost surely as $n\uparrow\infty$,
\begin{equation}\label{eq:small_final_est_sums}
\sum_{i=1}^n \frac{F_i W_i}{\bar W_i}  
= \sigma(n) + o(n^{\nicefrac\delta2})
\quad\text{ and }\quad
\sum_{i=1}^n \frac{F^2_i W_i}{\bar W_i} \left( 1-\frac{W_i}{\bar W_i}\right) 
= \frac{\gamma\mathbb E[L^3]}{3\mathbb E[L]^{1-\delta}} \cdot n^{\delta} +o(n^{\delta}).
\end{equation}

\begin{proof} 
By Lemma~\ref{lem:small_BC}, $L_i/T_{i-1}\to 0$ almost surely as $i\uparrow\infty$. 
Thus, almost surely as $i\uparrow\infty$,
\[T_{i-1}^{\delta}-T_i^{\delta}
= T_{i-1}^{\delta}- (T_{i-1}+L_i)^\delta
= T_{i-1}^\delta \bigg(1-\bigg(1+\frac{L_i}{T_{i-1}}\bigg)^{\!\!\delta}\bigg)
=\mathcal{O}(T_{i-1}^{\delta -1} L_i) = \mathcal O(T_i^{\delta-1}L_i).\]
This implies 
\begin{equation} \label{W_i/S_i approximation}
\frac{W_i}{\bar W_i}
=\frac{\mathrm{e}^{\gamma T_{i}^{\delta}}-\mathrm e^{\gamma T_{i-1}^{\delta}}}
{\mathrm{e}^{\gamma T_i^{\delta}}}
=1- \exp\big(-\gamma(T_{i}^{\delta}-T_{i-1}^{\delta})\big)
=\mathcal{O}(T_{i}^{\delta -1} L_i).
\end{equation} 
Thus,  
\[\sum_{i=1}^n \frac{(F_i-\mathbb E_{\bold{L}}[F_i])W_i}{\bar W_i} 
= \mathcal{O}\left( \sum_{i=1}^n (F_i-\mathbb E_{\bold{L}}[F_i])L_iT_{i}^{\delta -1}\right).\]
We apply Theorem~\ref{th:SLLN_Petrov} to $\alpha = 2$, $\Delta_i =  (F_{i}-\mathbb E_{\bf L}[F_{i}])L_{i}T_{i}^{\delta -1}$ and 
$a_i = T_i^{\nicefrac\delta2}$, for all $i\geq 1$. 
Its assumption holds because, using the fact that $F_i\leq L_i$ almost surely for all $i\geq 1$ (see~\eqref{eq:def_F} for the definition of $(F_i)_{i\geq 1}$),
\[\sum_{i\geq1} 
\frac{\mathrm{Var}_{\bold{L}}(F_i)L^2_iT_i^{2\delta-2}}{T_i^\delta} 
\leq  \sum_{i=1}^{\infty} L^4_{i}T_i^{\delta-2}< \infty,\]
by Lemma~\ref{lem:sum t_i} (because, by assumption, $\mathbb EL^4<\infty$).
Thus, Theorem~\ref{th:SLLN_Petrov} applies and gives $\sum_{i=1}^{n} (F_{i}-\mathbb E_{\bf L}[F_{i}])L_{i}T_{i}^{\delta -1} = o(T_n^{\nicefrac\delta2}) = o(n^{\nicefrac\delta2})$ almost surely when $n\uparrow \infty$ (the last equality holds by the strong law of large numbers).
We thus get that, almost surely as $n\uparrow\infty$,
\[\sum_{i=1}^n \frac{F_i W_i}{\bar W_i} 
= \sum_{i=1}^n \frac{\mathbb E_{\bold{L}}[F_i] W_i}{\bar W_i} + o(n^{\nicefrac\delta2}),\]
which concludes the proof of the first statement. 

For the second statement of the lemma, we write
\begin{equation}\label{cond variance error} 
\sum_{i=1}^n \frac{F_i^2 W_i}{\bar W_i}\left( 1-\frac{W_i}{\bar W_i}\right) = \sum_{i=1}^n \frac{F_i^2 W_i}{\bar W_i} - \sum_{i=1}^n \frac{F_i^2 W_i^2}{\bar W_i^2}.
\end{equation} 
For the first sum, we proceed as in the proof of the first statement:
we write
\[\sum_{i=1}^n \frac{F_i^2 W_i}{\bar W_i} 
= \sum_{i=1}^n \frac{\mathbb E_{\bf L}[F_i^2] W_i}{\bar W_i} 
+ \sum_{i=1}^n \frac{(F_i^2-\mathbb E_{\bf L}[F_i^2]) W_i}{\bar W_i},\]
and note that, by~\eqref{W_i/S_i approximation},
\begin{equation}\label{eq:small_truc}
\sum_{i=1}^n \frac{(F_i^2-\mathbb E_{\bf L}[F_i^2]) W_i}{\bar W_i}
= \mathcal O\left(\sum_{i=1}^n \big(F_i^2-\mathbb E_{\bf L}[F_i^2]\big)L_i T_{i}^{\delta-1}\right).
\end{equation}
We now apply Theorem~\ref{th:SLLN_Petrov} to $\alpha=2$, $\Delta_i = \big(F_i^2-\mathbb E_{\bf L}[F_i^2]\big)L_i T_i^{\delta-1}$, and $a_i = T_i^\delta$. Its assumption holds because
\[\sum_{i\geq 1} \frac{\mathrm{Var}_{\bf L}(F_i^2) L_i^2T_i^{2\delta-2}}{T_i^{2\delta}}
\leq \sum_{i\geq 1} L_i^4T_i^{-2}<\infty,
\]
by Lemma~\ref{lem:sum t_i}.
Theorem~\ref{th:SLLN_Petrov} thus implies that, almost surely as $n\uparrow\infty$,
\[\sum_{i=1}^n \big(F_i^2-\mathbb E_{\bf L}[F_i^2]\big)L_i T_i^{\delta-1} = o(n^\delta),\]
and thus, by~\eqref{eq:small_truc},
\begin{equation}\label{eq:small_var1}
\sum_{i=1}^n \frac{(F_i^2-\mathbb E_{\bf L}[F_i^2]) W_i}{\bar W_i} = o(n^\delta).
\end{equation}
For the second sum of~\eqref{cond variance error}, we proceed along the same lines (we skip the technical details) and prove that, almost surely as $n\uparrow\infty$,
\[\sum_{i=1}^n (F_i^2-\mathbb E_{\bold{L}}[F_i^2])\frac{W_i^2}{\bar W_i^2} 
= \mathcal{O}\left( \sum_{i=1}^n (F_i^2-\mathbb E_{\bold{L}}[F_i^2]) L_i^2 T_{i}^{2(\delta -1)}\right)
= o(n^\delta),\]
which implies
\begin{equation}\label{eq:small_truc2}
\sum_{i=1}^n \frac{F_i^2W_i^2}{\bar W_i^2} 
= \sum_{i=1}^n \frac{\mathbb E_{\bf L}[F_i^2]W_i^2}{\bar W_i^2} +o(n^\delta).
\end{equation}
Now,
\[\sum_{i=1}^n \frac{\mathbb E_{\bf L}[F_i^2]W_i^2}{\bar W_i^2} 
=\mathcal O\bigg(\sum_{i=1}^n \mathbb E_{\bf L}[F_i^2]L_i^2T_i^{2(\delta-1)}\bigg)
=\mathcal O\bigg(\sum_{i=1}^n L_i^4 T_i^{2(\delta-1)}\bigg).\]
Applying Theorem~\ref{th:SLLN_Petrov} to $\alpha = 2$, $\Delta_i = L_i^4 T_i^{2(\delta-1)}$, and $a_i = T_i^{\delta}$, we get
\[\sum_{i=1}^n \frac{\mathbb E_{\bf L}[F_i^2]W_i^2}{\bar W_i^2} = o(T_n^\delta) = o(n^\delta).\]
This, together with~\eqref{eq:small_truc2}, gives that, almost surely as $n\uparrow\infty$,
\[\sum_{i=1}^n \frac{F_i^2W_i^2}{\bar W_i^2}  = o(n^\delta).\]
By~\eqref{cond variance error} and~\eqref{eq:small_var1}, this concludes the proof.
\end{proof}

\subsection{Proof of Proposition~\ref{prop:CLT_Phi_small}}\label{sec:Lindeberg}
As mentioned at the beginning of this section, the proof of Proposition~\ref{prop:CLT_Phi_small}
relies on applying Lindeberg's theorem. We apply it, conditionally on ${\bf L}$ and ${\bf F}$, 
to the sum
\[\Phi(n) - \sum_{i=1}^n \frac{F_iW_i}{\bar W_i}
= \sum_{i=1}^n F_i\bigg({\bf 1}_{i\prec n} - \frac{W_i}{\bar W_i}\bigg).\]
Indeed, this is a sum of centred random variables because $\mathbb E_{{\bf L}, {\bf F}}[{\bf 1}_{i\prec n}] = W_i/\bar W_i$, by Lemma~\ref{lem:magic}.
To apply Lindeberg's theorem, we need to show that, for all $\varepsilon>0$,
\[\frac{1}{a_n^2}\sum_{i=1}^n 
\mathbb E_{{\bf L},{\bf F}}[F_i^2({\bf 1}_{i\prec n}-W_i/\bar W_i)^2{\bf 1}_{|F_i({\bf 1}_{i\prec n}-W_i/\bar W_i)|>\varepsilon a_n}] \to 0,\]
as $n\uparrow\infty$, where 
\begin{equation}\label{eq:def_a_n}
a_n^2:= \sum_{i=1}^n \mathrm{Var}_{{\bf L}, {\bf F}}\big(F_i{\bf 1}_{i\prec n}\big)
= \sum_{i=1}^n F_i^2 \mathrm{Var}_{{\bf L}, {\bf F}}\big({\bf 1}_{i\prec n}\big)
= \sum_{i=1}^n \frac{F_i^2W_i}{\bar W_i}\bigg(1-\frac{W_i}{\bar W_i}\bigg),
\end{equation}
by Lemma~\ref{lem:magic}.
First note that, by definition of $(F_i)_{i\geq 1}$ (see~\eqref{eq:def_F}), $F_i\leq L_i$ almost surely for all $i\geq 1$. Also, ${\bf 1}_{i\prec n}-W_i/\bar W_i\in [-1, 1]$ almost surely for all $i\geq 1$. Thus, almost surely, 
\ba
\frac{1}{a_n^2}\sum_{i=1}^n 
\mathbb E_{{\bf L},{\bf F}}[F_i^2({\bf 1}_{i\prec n}-W_i/\bar W_i)^2{\bf 1}_{|F_i({\bf 1}_{i\prec n}-W_i/\bar W_i)|>\varepsilon a_n}] 
&\leq \frac1{a_n^2}\sum_{i=1}^n L_i^2 \mathbb P_{{\bf L},{\bf F}}\big(|F_i({\bf 1}_{i\prec n}-W_i/\bar W_i)|>\varepsilon a_n\big)\\
&\leq \frac1{\varepsilon^2 a_n^4}\sum_{i=1}^n L_i^2 \mathbb E_{{\bf L},{\bf F}}\big[|F_i({\bf 1}_{i\prec n}-W_i/\bar W_i)|^2\big],
\ea
by Markov inequality. Using again the fact that $|F_i({\bf 1}_{i\prec n}-W_i/\bar W_i)|\leq L_i$ almost surely for all $i\geq 1$, we get 
\ba
\frac{1}{a_n^2}\sum_{i=1}^n 
\mathbb E_{{\bf L},{\bf F}}[F_i^2({\bf 1}_{i\prec n}-W_i/\bar W_i)^2{\bf 1}_{|F_i({\bf 1}_{i\prec n}-W_i/\bar W_i)|>\varepsilon a_n}] 
&\leq \frac1{\varepsilon^2 a_n^4}\sum_{i=1}^n L_i^4 
= \frac{n\mathbb E[L^4]}{\varepsilon^2 a_n^4},
\ea
almost surely as $n\uparrow\infty$, by the strong law of large numbers.
Now, by~\eqref{eq:small_final_est_sums}, almost surely as $n\uparrow\infty$,
\[a_n^2 = \frac{\gamma\mathbb E[L^3]}{3\mathbb E[L]^{1-\delta}} \cdot n^{\delta} +o(n^{\delta}).\]
Thus,
\[\frac{1}{a_n^2}\sum_{i=1}^n 
\mathbb E_{{\bf L},{\bf F}}[F_i^2({\bf 1}_{i\prec n}-W_i/\bar W_i)^2{\bf 1}_{|F_i({\bf 1}_{i\prec n}-W_i/\bar W_i)|>\varepsilon a_n}] = \mathcal O(n^{1-2\delta}) \to 0,\]
almost surely as $n\uparrow\infty$ because $\delta>\nicefrac12$.
Thus, Lindeberg's theorem applies and gives that, conditionally on ${\bf L}$ and ${\bf F}$, in distribution as $n\uparrow\infty$,
\[\frac{\Phi(n) - \sum_{i=1}^n \frac{F_iW_i}{\bar W_i}}{a_n}
\Rightarrow\mathcal N(0,1).\]
By~\eqref{eq:def_a_n} and~\eqref{eq:small_final_est_sums}, this implies
\[\frac{\Phi(n) - \sigma(n)+o(n^{\nicefrac\delta2})}
{\sqrt{\frac{\gamma\mathbb E[L^3]}{3\mathbb E[L]^{1-\delta}} \cdot n^{\delta}}}
\Rightarrow\mathcal N(0,1),\]
which concludes the proof of Proposition~\ref{prop:CLT_Phi_small}.

\section{Proof of Theorem~\ref{th:CLT_crit}}\label{sec:crit}
In this section, we assume that $\delta = 1$.
The proof follows the same route as in the case $\delta <1$, and the key ingredient is the following limiting theorem for $\Phi(n)$:
\begin{proposition}\label{CLT Phi delta=1} 
Under the assumptions of Theorem~\ref{th:CLT_crit} and if $\delta = 1$, then, in distribution as $n\uparrow\infty$,
\[\frac{\Phi(n) - n\mathbb E[L-(1-\mathrm e^{-\gamma L})/\gamma]}{\sqrt n}\Rightarrow \Omega_1 + \Omega_2,\]
where
\begin{equation}\label{eq:def_Xi}
\Omega_1\sim \mathcal N\bigg(0, \frac1{\gamma^2} + \mathbb E\bigg[L^2-\bigg(L+\frac{\mathrm e^{-\gamma L}}{\gamma}\bigg)^{\!\!2}\bigg]\bigg)\quad \text{ and }\quad
\Omega_2\sim \mathcal N\bigg(0,\mathrm{Var}\bigg(L- \frac{1-\mathrm e^{-\gamma L}}\gamma\bigg)\bigg),
\end{equation}
and
\[\mathrm{Cov}(\Omega_1, \Omega_2) 
= \mathrm{Var}\bigg(L - \frac{1- \mathrm e^{-\gamma L}}\gamma\bigg).\] 
\end{proposition}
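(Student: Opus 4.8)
\medskip
\noindent\textit{Proof strategy.}\quad
The key simplification at $\delta=1$ is that $\mu(x)=\gamma\mathrm e^{\gamma x}$, so that $W_i=\mathrm e^{\gamma T_i}-\mathrm e^{\gamma T_{i-1}}$, $\bar W_i=\mathrm e^{\gamma T_i}-1$, and hence
\[
\frac{W_i}{\bar W_i}=\frac{1-\mathrm e^{-\gamma L_i}}{1-\mathrm e^{-\gamma T_i}}=\bigl(1-\mathrm e^{-\gamma L_i}\bigr)+\mathcal O\bigl(\mathrm e^{-\gamma T_i}\bigr);
\]
moreover, by~\eqref{eq:def_F}, the conditional law of $F_i$ given $\mathbf L$ has density $\gamma\mathrm e^{\gamma x}/(\mathrm e^{\gamma L_i}-1)$ on $[0,L_i]$, which depends on $L_i$ alone, so $(L_i,F_i)_{i\geq1}$ is an i.i.d.\ sequence and, conditionally on $\mathbf L$, $\mathbf F$ is independent of the genealogy~$\prec$. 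Since $T_i\sim i\,\mathbb E L$ almost surely, the errors $\mathcal O(\mathrm e^{-\gamma T_i})$ (once multiplied by $F_i\le L_i$) form an a.s.\ convergent series, so throughout they contribute only an $\mathcal O(1)$ term, negligible at scale $\sqrt n$. It is precisely this i.i.d.\ structure that makes $\sqrt n$, rather than $n^{\delta/2}$ as in Section~\ref{sec:small}, the relevant scale.

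I would start from the decomposition $\Phi(n)=M_n+N_n+P_n$ with
\[
M_n=\sum_{i=1}^n F_i\Bigl(\mathbf 1_{i\prec n}-\tfrac{W_i}{\bar W_i}\Bigr),\qquad
N_n=\sum_{i=1}^n \bigl(F_i-\mathbb E_{\mathbf L}[F_i]\bigr)\tfrac{W_i}{\bar W_i},\qquad
P_n=\sum_{i=1}^n \frac{\mathbb E_{\mathbf L}[F_i]\,W_i}{\bar W_i},
\]
and prove a central limit theorem for each piece. For $P_n$, integrating by parts as in~\eqref{eq:exp_F} yields the exact identity $\mathbb E_{\mathbf L}[F_i]\,W_i/\bar W_i=\bigl(L_i-(1-\mathrm e^{-\gamma L_i})/\gamma\bigr)/(1-\mathrm e^{-\gamma T_i})$, so $P_n=\sum_{i=1}^n\bigl(L_i-(1-\mathrm e^{-\gamma L_i})/\gamma\bigr)+\mathcal O(1)$ almost surely; the ordinary central limit theorem (needing only $\mathbb E L^2<\infty$) then gives $\bigl(P_n-n\,\mathbb E[L-(1-\mathrm e^{-\gamma L})/\gamma]\bigr)/\sqrt n\Rightarrow\Omega_2$ with $\mathrm{Var}(\Omega_2)=\mathrm{Var}(L-(1-\mathrm e^{-\gamma L})/\gamma)$, which also identifies the centring claimed in the proposition.

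Unlike in Section~\ref{sec:small}, $M_n$ and $N_n$ are both of order $\sqrt n$ and therefore contribute; I would treat them by the Lindeberg argument of Section~\ref{sec:Lindeberg}. Conditionally on $(\mathbf L,\mathbf F)$, $M_n$ is a sum of independent centred variables (Lemma~\ref{lem:magic}) with variance $\sum_i F_i^2\tfrac{W_i}{\bar W_i}(1-\tfrac{W_i}{\bar W_i})$; conditionally on $\mathbf L$, $N_n$ is a sum of independent centred variables with variance $\sum_i\mathrm{Var}_{\mathbf L}(F_i)(W_i/\bar W_i)^2$, where $\mathrm{Var}_{\mathbf L}(F_i)=\gamma^{-2}-L_i^2\mathrm e^{-\gamma L_i}/(1-\mathrm e^{-\gamma L_i})^2$ by a direct integration-by-parts evaluation of $\mathbb E_{\mathbf L}[F_i^2]$. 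Using $W_i/\bar W_i=1-\mathrm e^{-\gamma L_i}+\mathcal O(\mathrm e^{-\gamma T_i})$ together with the strong law of large numbers for the i.i.d.\ pairs $(L_i,F_i)$, these conditional variances are a.s.\ asymptotic to $n\,\mathbb E[F^2(1-\mathrm e^{-\gamma L})\mathrm e^{-\gamma L}]$ and $n\,\mathbb E[\mathrm{Var}_{\mathbf L}(F)(1-\mathrm e^{-\gamma L})^2]$, and the Lindeberg condition holds for the same reason as in Section~\ref{sec:Lindeberg}: since $|F_i(\mathbf 1_{i\prec n}-W_i/\bar W_i)|\le L_i$ and $|(F_i-\mathbb E_{\mathbf L}[F_i])W_i/\bar W_i|\le L_i$, the truncated second moments are $\mathcal O\bigl(n^{-1}\mathbb E[L^4]\bigr)\to0$, which is where $\mathbb E L^4<\infty$ enters. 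Hence $M_n/\sqrt n$ and $N_n/\sqrt n$ converge, conditionally and then unconditionally, to centred Gaussians whose variances should sum, after a short algebraic check, to $\gamma^{-2}+\mathbb E[L^2-(L+\mathrm e^{-\gamma L}/\gamma)^2]=\mathrm{Var}(\Omega_1)$.

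It remains to glue the three pieces into a joint limit theorem. The relevant structural facts are that $M_n$ is centred given $(\mathbf L,\mathbf F)$, that $N_n$ is centred given $\mathbf L$, and that $P_n$ is $\mathbf L$-measurable; iterating the conditional limit theorems (conditionally on $(\mathbf L,\mathbf F)$, then averaging over $\mathbf F$ given $\mathbf L$, then over $\mathbf L$) gives joint convergence of $\bigl(M_n,N_n,P_n-n\,\mathbb E[L-(1-\mathrm e^{-\gamma L})/\gamma]\bigr)/\sqrt n$ to a Gaussian vector, from which one reads off the limiting covariance matrix and then sets $\Omega_1:=\lim(M_n+N_n)/\sqrt n$ and $\Omega_2:=\lim\bigl(P_n-n\,\mathbb E[L-(1-\mathrm e^{-\gamma L})/\gamma]\bigr)/\sqrt n$. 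I expect this last step --- carrying the various $\mathcal O(\mathrm e^{-\gamma T_i})$ corrections through cleanly, evaluating $\mathrm{Cov}(\Omega_1,\Omega_2)$, and checking that the three limiting variances agree with the stated expressions --- to be the main technical obstacle; the conditional Lindeberg and strong-law estimates themselves are direct analogues of those already carried out in Sections~\ref{sub:small_prel}--\ref{sec:Lindeberg}.
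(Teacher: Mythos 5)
Your overall route is the paper's: split $\Phi(n)$ into the fluctuation part $\Phi(n)-\mathbb E_{\mathbf L}[\Phi(n)]$ (your $M_n+N_n$) and the $\mathbf L$-measurable part $\mathbb E_{\mathbf L}[\Phi(n)]$ (your $P_n$), prove a conditional Lindeberg CLT for the first and an i.i.d.\ CLT for the second, then combine. Your ingredients are correct, and in places more careful than the paper's (you keep the $-1$ in $\bar W_i=\mathrm e^{\gamma T_i}-1$ and track the summable $\mathcal O(\mathrm e^{-\gamma T_i})$ corrections; your formula for $\mathrm{Var}_{\mathbf L}(F_i)$ and the identity $P_n=\sum_{i=1}^n\big(L_i-(1-\mathrm e^{-\gamma L_i})/\gamma\big)+\mathcal O(1)$ both check out). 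The only structural difference is that you run Lindeberg twice (for $M_n$ given $(\mathbf L,\mathbf F)$ and for $N_n$ given $\mathbf L$), whereas the paper applies it once to $(F_i\mathbf 1_{i\prec n})_{1\leq i\leq n}$ conditionally on $\mathbf L$ and uses the total-variance identity $\mathrm{Var}_{\mathbf L}(F_i\mathbf 1_{i\prec n})=\frac{W_i}{\bar W_i}\mathbb E_{\mathbf L}[F_i^2]-\big(\frac{W_i}{\bar W_i}\mathbb E_{\mathbf L}[F_i]\big)^2$; this is essentially cosmetic (your two conditional variances do add up to the stated $\mathrm{Var}(\Omega_1)$), though your version needs one extra line for the joint convergence of $(M_n,N_n)$, using that $N_n$ is $(\mathbf L,\mathbf F)$-measurable while $M_n$ is conditionally centred.

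The genuine gap is exactly the step you defer: the joint law of $(\Omega_1,\Omega_2)$, i.e.\ the covariance claim, which is part of the statement and is what fixes the law of $\Omega_1+\Omega_2$ used in Theorem~\ref{th:CLT_crit}. This is not a routine verification, because the two natural routes give different answers and you must commit to one. For every $n$ the tower rule gives $\mathrm{Cov}\big(\Phi(n)-\mathbb E_{\mathbf L}[\Phi(n)],\,\mathbb E_{\mathbf L}[\Phi(n)]\big)=0$, and the gluing scheme you describe (a conditional CLT whose limiting variance is almost surely deterministic, then integration over $\mathbf L$) in fact yields asymptotic \emph{independence}: the conditional characteristic function of $(M_n+N_n)/\sqrt n$ converges almost surely to a deterministic Gaussian characteristic function and factorises out of the expectation against any $\mathbf L$-measurable quantity such as $(P_n-n\mathbb E[L-(1-\mathrm e^{-\gamma L})/\gamma])/\sqrt n$. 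With your definitions of $\Omega_1$ and $\Omega_2$ this forces $\mathrm{Cov}(\Omega_1,\Omega_2)=0$, whereas the proposition asserts $\mathrm{Cov}(\Omega_1,\Omega_2)=\mathrm{Var}\big(L-(1-\mathrm e^{-\gamma L})/\gamma\big)$, a value the paper obtains by identifying $\mathrm{Cov}(\Omega_1,\Omega_2)$ with $\lim_n n^{-1}\mathrm{Cov}\big(\Phi(n),\mathbb E_{\mathbf L}[\Phi(n)]\big)$ --- note that there the first argument is $\Phi(n)$, not the centred $\Phi(n)-\mathbb E_{\mathbf L}[\Phi(n)]$, which is precisely where the two computations part ways. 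So as written your proposal establishes the two marginal variances and the centring, but not the dependence structure of $(\Omega_1,\Omega_2)$; to finish you must either reproduce the paper's covariance computation or confront and resolve the discrepancy between it and the independence that your own gluing argument produces.
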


Before proving Proposition~\ref{CLT Phi delta=1}, we show how it implies Theorem \ref{th:CLT_crit}:

\begin{proof}[Proof of Theorem~\ref{th:CLT_crit}] 
By the strong law of large numbers, $i(t)\to \infty$ almost surely as $t\uparrow\infty$. Thus, by Proposition~\ref{CLT Phi delta=1}, in distribution as $t\uparrow\infty$,
\[\frac{\Phi(i(t)-1)-i(t)\mathbb E[1-(1-\mathrm e^{-\gamma L})/\gamma]}{\sqrt{i(t)}} \Rightarrow \Omega_1+\Omega_2.\]
Now, by the central limit theorem, in distribution as $n\uparrow\infty$
\begin{equation}\label{eq:def_Phi}
\frac{\sum_{i=1}^n L_i - n\mathbb EL}{\sqrt n} \Rightarrow \Psi \sim \mathcal N(0, \mathrm{Var}(L)).
\end{equation}
Thus, because $i(t)\uparrow\infty$ almost surely as $t\uparrow\infty$, we get
\[\frac{\sum_{i=1}^{i(t)} L_i - i(t)\mathbb EL}{\sqrt{i(t)}} \Rightarrow \Psi.\]
By standard results in renewal theory, $L_{i(t)}$ converges in distribution as $t\uparrow\infty$. Because $T_{i(t)}-L_{i(t)}\leq t<T_{i(t)}$, this implies that $T_{i(t)} = \sum_{i=1}^{i(t)} L_i = t+\mathcal O(1)$ in probability as $t\uparrow\infty$.
Thus, in distribution as $t\uparrow\infty$,
\[\frac{t - i(t)\mathbb EL}{\sqrt{i(t)}} \Rightarrow \Psi.\]
This implies
\[\frac{\Phi(i(t)-1)- t\mathbb E[L-(1-\mathrm e^{-\gamma L})/\gamma]/\mathbb E[L]}{\sqrt{i(t)}}
\Rightarrow \Omega_1+\Omega_2 + \Omega_3,\]
where we have set $\Omega_3 = \Psi{\mathbb E[L-(1-\mathrm e^{-\gamma L})/\gamma]}/{\mathbb E[L]}$.
Because $A(t)/\sqrt{i(t)}\to 0$ in probability, by Lemma~\ref{lem:renewal}, and because, by definition, $S(t) = \Phi(i(t)-1)+A(t)$, this implies that
\[\frac{S(t)- t\mathbb E[L-(1-\mathrm e^{-\gamma L})/\gamma]/\mathbb E[L]}{\sqrt{i(t)}}
\Rightarrow \Omega_1+\Omega_2 + \Omega_3,\]
which implies
\[\frac{S(t)- t\mathbb E[L-(1-\mathrm e^{-\gamma L})/\gamma]/\mathbb E[L]}{\sqrt{t}}
\Rightarrow \frac1{\sqrt{\mathbb EL}}(\Omega_1+\Omega_2 + \Omega_3) =: \Omega
\]
By Skorokhod's representation theorem, there exist a probability space (which we call Skorokhod's probability space) on which one can define a process $(\tilde{S}(t))_{t\geq 0}$ and a random variable $\tilde \Omega$ such that $\tilde S(t) = S(t)$ in distribution for all $t\geq 0$, $\tilde \Omega = \Omega$ in distribution, and
\[\frac{\tilde S(t)- t\mathbb E[L-(1-\mathrm e^{-\gamma L})/\gamma]/\mathbb E[L]}{\sqrt{i(t)}}
\to \tilde\Omega,\]
almost surely as $t\uparrow\infty$.
In Skorokhod's probability space, there also exist a process $(\tilde{Z}(t))_{t\geq 0}$ and a random variable $\tilde\Gamma$ of distribution $\gamma$ such that $(\tilde{Z}(t))_{t\geq 0}$ is independent of~$(\tilde{S}(t))_{t\geq 0}$, $\tilde Z(t) = Z(t)$ in distribution for all $t\geq 0$, and
\[\frac{\tilde{Z}(t)-a(t)}{b(t)}\to \tilde\Gamma,\]
almost surely as $t\uparrow\infty$. 
Thus, by Assumption {\bf (A'2)},
\begin{align*}
\frac{\tilde{Z}(\tilde{S}(t))-a(s(t))}{b(s(t))} 
&= \frac{\tilde{Z}(\tilde{S}(t))-a(\tilde{S}(t))}{b(\tilde{S}(t))} \cdot \frac{b(\tilde{S}(t))}{a(s(t))}  + \frac{a(\tilde{S}(t))-a(s(t))}{b(s(t))}\\
& \to f(\tilde\Omega)+\tilde\Gamma g(\tilde\Omega).
\end{align*}
This implies convergence in distribution on the original probability space. Thus, it only remains to calculate $\mathrm{Cov}(\Omega_1, \Omega_3)$ and $\mathrm{Cov}(\Omega_2, \Omega_3)$; this is done in Section~\ref{sub:covariances}, where we prove that
\begin{equation}\label{eq:covariances}
\mathrm{Cov}(\Omega_1, \Psi) 
=\mathrm{Cov}(\Omega_2, \Psi) 
=\mathrm{Cov}(L, L+ \mathrm e^{-\gamma L}/\gamma).
\end{equation}
This concludes the proof of Theorem~\ref{th:CLT_crit} because $\Omega_3 = \Psi{\mathbb E[L-(1-\mathrm e^{-\gamma L})/\gamma]}/{\mathbb E[L]}$.
\end{proof}

As in Section~\ref{sec:small}, the idea to prove Proposition~\ref{CLT Phi delta=1} is to
apply Lindeberg's theorem. However, this time, we only condition on {\bf L} (and not on {\bf F}).
This will give
\begin{equation}\label{eq:crit_Lind}
\frac{\Phi(n)-\mathbb E_{{\bf L}}[\Phi(n)]}
{\sqrt{\mathrm{Var}_{{\bf L}}(\Phi(n))}}\Rightarrow \mathcal N(0,1).
\end{equation}
By Lemma~\ref{lem:magic}, and because, 
given ${\bf L}$, the sequences $(F_i)_{1\leq i\geq n}$ and $({\bf 1}_{i\prec n})_{1\leq i\leq n}$ are independent, we get
\begin{equation}\label{eq:crit_exp}
\mathbb E_{\bf L}[\Phi(n)]
=\sum_{i=1}^{n} \frac{W_i\mathbb E_{\bf L}[F_i]}{\bar W_i}.
\end{equation}
For the variance, we use the fact that, given ${\bf L}$, $(F_i{\bf 1}_{i\prec n})_{1\leq i\leq n}$ is a sequence of independent random variables, and thus 
\[\mathrm{Var}_{\bf L}(\Phi(n)) = \sum_{i=1}^n \mathrm{Var}_{\bf L}(F_i {\bf 1}_{i\prec n}).\]
Using the total variance law in the first equality and Lemma~\ref{lem:magic} in the second one, we get that, for all $1\leq i\leq n$,
\ban
\mathrm{Var}_{\bf L}(F_i {\bf 1}_{i\prec n})
&= \mathrm{Var}_{\bf L}\big(\mathbb E_{{\bf L},{\bf F}}[F_i {\bf 1}_{i\prec n}]\big)
+ \mathbb E_{\bf L}\big[\mathrm{Var}_{{\bf L},{\bf F}}(F_i {\bf 1}_{i\prec n})\big]
= \mathrm{Var}_{\bf L}\bigg(\frac{F_iW_i}{\bar W_i}\bigg)
+ \mathbb E_{\bf L}\bigg[\frac{F_i^2W_i}{\bar W_i}\bigg(1-\frac{W_i}{\bar W_i}\bigg)\bigg]\notag\\
&= \bigg(\frac{W_i}{\bar W_i}\bigg)^2 \mathrm{Var}_{\bf L}(F_i) + \frac{W_i}{\bar W_i}\bigg(1-\frac{W_i}{\bar W_i}\bigg)\mathbb E_{\bf L}[F_i^2]
= \frac{W_i}{\bar W_i} \mathbb E_{\bf L}[F_i^2] - \bigg(\frac{W_i}{\bar W_i}\bigg)^{\!\!2}\mathbb E_{\bf L}[F_i]^2.
\label{eq:total_variance}
\ean
We thus get
\begin{equation}\label{eq:crit_var}
\mathrm{Var}_{\bf L}(\Phi(n)) 
= \sum_{i=1}^n\bigg(\frac{W_i}{\bar W_i} \mathbb E_{\bf L}[F_i^2] - \bigg(\frac{W_i}{\bar W_i}\mathbb E_{\bf L}[F_i]\bigg)^{\!\!2} \bigg).
\end{equation}
The rest of the section is dedicated to proving Proposition~\ref{CLT Phi delta=1}: In Section~\ref{sec:crit_estimates}, we give asymptotic estimates for $\mathbb E_{\bf L}[\Phi(n)]$ and 
$\mathrm{Var}_{\bf L}(\Phi(n))$. In Section~\ref{sec:crit_lindeberg}, we apply Lindeberg theorem to prove~\eqref{eq:crit_Lind} and conclude the proof.

\subsection{Estimating the expectation and variance of $\Phi(n)$}\label{sec:crit_estimates}
In this section, we prove the following lemma:
\begin{lemma}\label{lem:crit_exp_and_var}
Almost surely for all $n\geq 1$,
\[\mathbb E_{\bf L}[\Phi(n)] = 
\sum_{i=1}^n\bigg(L_i - \frac{1- \mathrm e^{-\gamma L_i}}\gamma\bigg),\]
and
\[\mathrm{Var}_{\bf L}(\Phi(n)) = \frac n{\gamma^2}+
\sum_{i=1}^n \bigg(L^2_i-\bigg(L_i+\frac{\mathrm e^{-\gamma L_i}}{\gamma}\bigg)^{\!\!2}\bigg).\]
\end{lemma}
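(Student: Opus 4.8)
The plan is to evaluate directly the two sums in~\eqref{eq:crit_exp} and~\eqref{eq:crit_var}: since both of those identities are already established (via Lemma~\ref{lem:magic}, the total variance law, and the conditional independence of $\bf F$ and $({\bf 1}_{i\prec n})_i$ given $\bf L$), all that remains is to compute $W_i\mathbb E_{\bf L}[F_i]/\bar W_i$ and $W_i\mathbb E_{\bf L}[F_i^2]/\bar W_i$ in closed form for the kernel $\mu(x)=\gamma\mathrm e^{\gamma x}$. The feature that makes the case $\delta=1$ tractable (and special) is that, once one divides by $\bar W_i=\mathrm e^{\gamma T_i}$, every factor $\mathrm e^{\gamma T_i}$ and $\mathrm e^{\gamma T_{i-1}}$ either cancels or combines, via $T_i-T_{i-1}=L_i$, into $\mathrm e^{-\gamma L_i}$, so that each summand becomes a deterministic function of the single run-length $L_i$. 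This is exactly why $\mathbb E_{\bf L}[\Phi(n)]$ and $\mathrm{Var}_{\bf L}(\Phi(n))$ come out as sums of i.i.d.\ terms, which is what the Lindeberg argument of Section~\ref{sec:crit_lindeberg} then exploits.

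For the first moment, I would start from~\eqref{eq:exp_F}, observe that $\mu(T_{i-1}+x)=\tfrac{\mathrm d}{\mathrm dx}\mathrm e^{\gamma(T_{i-1}+x)}$, and integrate by parts:
\[
W_i\mathbb E_{\bf L}[F_i]=\int_0^{L_i}x\,\mathrm d\!\left(\mathrm e^{\gamma(T_{i-1}+x)}\right)=L_i\mathrm e^{\gamma T_i}-\frac1\gamma\!\left(\mathrm e^{\gamma T_i}-\mathrm e^{\gamma T_{i-1}}\right)=L_i\mathrm e^{\gamma T_i}-\frac{W_i}\gamma .
\]
Dividing by $\bar W_i=\mathrm e^{\gamma T_i}$ gives $W_i\mathbb E_{\bf L}[F_i]/\bar W_i=L_i-(1-\mathrm e^{-\gamma L_i})/\gamma$, and summing over $i$ yields the first identity of the lemma. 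For the second moment, two integrations by parts (equivalently, the primitive $\int x\mathrm e^{\gamma x}\,\mathrm dx=(\gamma x-1)\mathrm e^{\gamma x}/\gamma^2$) give
\[
W_i\mathbb E_{\bf L}[F_i^2]=\int_0^{L_i}x^2\,\mathrm d\!\left(\mathrm e^{\gamma(T_{i-1}+x)}\right)=L_i^2\mathrm e^{\gamma T_i}-\frac{2L_i}\gamma\mathrm e^{\gamma T_i}+\frac{2W_i}{\gamma^2},
\]
so that $W_i\mathbb E_{\bf L}[F_i^2]/\bar W_i=L_i^2-\tfrac{2L_i}\gamma+\tfrac{2}{\gamma^2}(1-\mathrm e^{-\gamma L_i})$.

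It then remains to insert these two expressions into~\eqref{eq:crit_var} and simplify term by term: one must check the elementary identity
\[
L_i^2-\frac{2L_i}\gamma+\frac{2(1-\mathrm e^{-\gamma L_i})}{\gamma^2}-\left(L_i-\frac{1-\mathrm e^{-\gamma L_i}}\gamma\right)^{\!2}=\frac1{\gamma^2}+L_i^2-\left(L_i+\frac{\mathrm e^{-\gamma L_i}}\gamma\right)^{\!2},
\]
both sides being equal to $\tfrac{1-\mathrm e^{-2\gamma L_i}}{\gamma^2}-\tfrac{2L_i\mathrm e^{-\gamma L_i}}\gamma$; summing over $i$ gives the variance formula. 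I do not expect any genuine obstacle here: the whole argument is a short deterministic computation once~\eqref{eq:crit_exp}--\eqref{eq:crit_var} are in hand, and the only points requiring a bit of care are the bookkeeping in the two integrations by parts and the algebra in the last display.
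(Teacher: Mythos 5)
Your proposal is correct and follows essentially the same route as the paper: starting from~\eqref{eq:crit_exp} and~\eqref{eq:crit_var}, computing $W_i\mathbb E_{\bf L}[F_i]$ and $W_i\mathbb E_{\bf L}[F_i^2]$ by integration by parts, dividing by $\bar W_i=\mathrm e^{\gamma T_i}$ so that everything collapses to functions of $L_i$ via $W_i/\bar W_i=1-\mathrm e^{-\gamma L_i}$, and then simplifying the algebra. The closed forms and the final identity you check all match the paper's computation.
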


Before proving this result, we note that, by the central limit theorem for $\mathbb E_{\bf L}[\Phi(n)]$
and the law of large numbers for $\mathrm{Var}_{\bf L}(\Phi(n))$,
\begin{equation}\label{eq:crit_est_exp}
\frac1{\sqrt n}\bigg(\mathbb E_{\bf L}[\Phi(n)] - n\mathbb E\bigg[L - \frac{1- \mathrm e^{-\gamma L}}\gamma\bigg]\bigg)
\Rightarrow \mathcal N\bigg(0,\mathrm{Var}\bigg(L - \frac{1- \mathrm e^{-\gamma L}}\gamma\bigg) \bigg),
\end{equation}
in distribution as $n\uparrow\infty$, and
\begin{equation}\label{eq:crit_est_var}
\mathrm{Var}_{\bf L}(\Phi(n)) =\frac n{\gamma^2}+n\mathbb E\bigg[L^2-\bigg(L+\frac{\mathrm e^{-\gamma L}}{\gamma}\bigg)^{\!\!2}\bigg] + o(n),
\end{equation}
almost surely as $n\uparrow\infty$.

\begin{proof}
First note that, in the case $\delta = 1$, for all $i\geq 1$,
\begin{equation}\label{eq:W/S}
\frac{W_i}{\bar W_i} = \frac{\bar W_i - \bar W_{i-1}}{\bar W_i} 
= \frac{\mathrm e^{\gamma T_i} - \mathrm e^{\gamma T_{i-1}}}{\mathrm e^{\gamma T_i}}
= 1- \mathrm e^{-\gamma L_i}.
\end{equation}
Also, using the definition of $(F_i)_{i\geq 1}$ (see~\eqref{eq:def_F}) in the first equality 
and integration by parts in the third one, we get that, for all $i\geq 1$,
\ban
W_i \mathbb E_{\bf L}[F_i]
&= \int_0^{L_i} \gamma x\mathrm e^{\gamma(T_{i-1}+x)}\mathrm dx
= \int_{T_{i-1}}^{T_i} \gamma (x-T_{i-1})\mathrm e^{\gamma x}\mathrm dx
= \big[(x-T_{i-1})\mathrm e^{\gamma x}\big]_{T_{i-1}}^{T_i}
- \int_{T_{i-1}}^{T_i} \mathrm e^{\gamma x}\mathrm dx\label{eq:crit_truc}\\
&= L_i \mathrm e^{\gamma T_i} - \frac{\mathrm e^{\gamma T_i}-\mathrm e^{\gamma T_{i-1}}}\gamma,\notag
\ean
because, $\mu(x) = \gamma \mathrm e^{\gamma x}$ in the case when $\delta =1$.
Using~\eqref{eq:W/S}, we thus get that, for all $i\geq 1$,
\begin{equation}\label{eq:crit_WEF/S}
\frac{W_i \mathbb E_{\bf L}[F_i]}{\bar W_i}= L_i - \frac{1- \mathrm e^{-\gamma L_i}}\gamma.
\end{equation}
Thus, by~\eqref{eq:crit_exp}, we get
\[\mathbb E_{\bf L}[\Phi(n)] 
= \sum_{i=1}^n\bigg(L_i - \frac{1- \mathrm e^{-\gamma L_i}}\gamma\bigg).\]
For the variance, recall that, by~\eqref{eq:crit_var},
\begin{equation}\label{eq:crit_var2}
\mathrm{Var}_{\bf L}(\Phi(n)) 
= \sum_{i=1}^n\bigg(\frac{W_i}{\bar W_i} \mathbb E_{\bf L}[F_i^2] - \bigg(\frac{W_i}{\bar W_i}\mathbb E_{\bf L}[F_i]\bigg)^{\!\!2}\bigg).
\end{equation}
By~\eqref{eq:crit_WEF/S}, we have
\begin{equation}\label{eq:crit_var_first_term}
\bigg(\frac{W_i}{\bar W_i}\mathbb E_{\bf L}[F_i]\bigg)^2
=\bigg(L_i - \frac{1- \mathrm e^{-\gamma L_i}}\gamma\bigg)^{\!\!2}.
\end{equation}
Also, by definition of $(F_i)_{i\geq 1}$, for all $i\geq 1$,
\ba
W_i\mathbb E_{\bf L}[F_i^2]
&= \int_0^{L_i} \gamma x^2 \mathrm e^{\gamma(T_{i-1}+x)}\mathrm dx
= \int_{T_{i-1}}^{T_i} \gamma (x-T_{i-1})^2 \mathrm e^{\gamma x}\mathrm dx\\
&= \big[(x-T_{i-1})^2 \mathrm e^{\gamma x}\big]_{T_{i-1}}^{T_i}
- 2\int_{T_{i-1}}^{T_i}  (x-T_{i-1}) \mathrm e^{\gamma x}\mathrm dx
= L_i^2 \bar W_i - \frac2\gamma W_i\mathbb E_{\bf L}[F_i],
\ea
where we have used the fact that $W_i\mathbb E_{\bf L}[F_i] = \int_{T_{i-1}}^{T_i}  \gamma(x-T_{i-1}) \mathrm e^{\gamma x}\mathrm dx$ (as seen in~\eqref{eq:crit_truc}).
Now using~\eqref{eq:crit_WEF/S}, we get
\[\frac{W_i\mathbb E_{\bf L}[F_i^2]}{\bar W_i} 
= L_i^2 - \frac2{\gamma}\bigg(L_i - \frac{1- \mathrm e^{-\gamma L_i}}\gamma\bigg).\]
This, together with~\eqref{eq:crit_var_first_term}, gives
\[\frac{W_i}{\bar W_i} \mathbb E_{\bf L}[F_i^2] - \bigg(\frac{W_i}{\bar W_i}\mathbb E_{\bf L}[F_i]\bigg)^2
=L_i^2 - \frac2{\gamma}\bigg(L_i - \frac{1- \mathrm e^{-\gamma L_i}}\gamma\bigg)-\bigg(L_i - \frac{1- \mathrm e^{-\gamma L_i}}\gamma\bigg)^{\!\!2}
= L^2_i+ \frac1{\gamma^2}-\bigg(L_i+\frac{\mathrm e^{-\gamma L_i}}{\gamma}\bigg)^{\!\!2},\]
which concludes the proof.
\end{proof}

\subsection{Applying Lindeberg's theorem}\label{sec:crit_lindeberg}
In this section, we prove that, conditionally on ${\bf L}$, in distribution as $n\uparrow\infty$,
\begin{equation}\label{eq:crit_Lind2}
\frac{\Phi(n)-\mathbb E_{{\bf L}}[\Phi(n)]}
{\sqrt{\mathrm{Var}_{{\bf L}}(\Phi(n))}}\Rightarrow \mathcal N(0,1).
\end{equation}
To do so, we apply Lindeberg's theorem to the triangular array 
$(F_i {\bf 1}_{i\prec n})_{1\leq i\leq n}$, conditionally on ${\bf L}$.
For all $n\geq 1$, we let
\[a_n^2 
= \sum_{i=1}^n \mathrm{Var}_{\bf L}\big(F_i {\bf 1}_{i\prec n}\big)
= \mathrm{Var}_{{\bf L}}(\Phi(n)) 
\sim \frac n{\gamma^2}+n\mathbb E\bigg[L-\bigg(1+\frac{\mathrm e^{-\gamma L}}{\gamma}\bigg)^{\!\!2}\bigg],\]
almost surely as $n\uparrow\infty$.
To apply Lindeberg's theorem, we need to show that
\begin{equation}\label{eq:a_n_to0}
\frac1{a_n^2}\sum_{i=1}^n \mathbb E_{\bf L}\big[
(F_i {\bf 1}_{i\prec n}-\mathbb E_{\bf L}[F_i {\bf 1}_{i\prec n}])^2
{\bf 1}_{|F_i {\bf 1}_{i\prec n}-\mathbb E_{\bf L}[F_i {\bf 1}_{i\prec n}|>\varepsilon a_n}
\big]
\to 0,
\end{equation}
almost surely as $n\uparrow\infty$, for all $\varepsilon>0$.
Using the fact that, almost surely, $0\leq F_i{\bf 1}_{i\prec n}\leq L_i$ for all $1\leq i\leq n$, we get
\ba
\frac1{a_n^2}\sum_{i=1}^n \mathbb E_{\bf L}\big[
(F_i {\bf 1}_{i\prec n}-\mathbb E_{\bf L}[F_i {\bf 1}_{i\prec n}])^2
{\bf 1}_{|F_i {\bf 1}_{i\prec n}-\mathbb E_{\bf L}[F_i {\bf 1}_{i\prec n}|>\varepsilon a_n}
\big]
&\leq \frac1{a_n^2}\sum_{i=1}^n L_i^2\mathbb P_{\bf L}\big(
|F_i {\bf 1}_{i\prec n}-\mathbb E_{\bf L}[F_i {\bf 1}_{i\prec n}|>\varepsilon a_n
\big)\\
&\leq \frac1{a_n^2}\sum_{i=1}^n L_i^2\cdot \frac{\mathrm{Var}_{\bf L}(F_i {\bf 1}_{i\prec n})}{\varepsilon^2 a_n^2},
\ea
by Markov's inequality. Now, using again the fact that $0\leq F_i{\bf 1}_{i\prec n}\leq L_i$ almost surely for all $1\leq i\leq n$, we get
\[\frac1{a_n^2}\sum_{i=1}^n \mathbb E_{\bf L}\big[
(F_i {\bf 1}_{i\prec n}-\mathbb E_{\bf L}[F_i {\bf 1}_{i\prec n}])^2
{\bf 1}_{|F_i {\bf 1}_{i\prec n}-\mathbb E_{\bf L}[F_i {\bf 1}_{i\prec n}|>\varepsilon a_n}
\big]
\leq \frac1{\varepsilon^2 a_n^4}\sum_{i=1}^n L_i^4 \sim \frac{\mathbb E[L^4]n}{\varepsilon^2 a_n^4}\to 0,
\]
almost surely as $n\uparrow\infty$, by~\eqref{eq:a_n_to0}.
This concludes the proof of~\eqref{eq:crit_Lind2}.

\subsection{Proof of Proposition \ref{CLT Phi delta=1}}\label{sec:crit_proof}
The proof of Proposition~\ref{CLT Phi delta=1} relies on Equations~\eqref{eq:crit_est_exp}, \eqref{eq:crit_est_var}~ and~\eqref{eq:crit_Lind2}.
Indeed, we write
\ba
\frac{\Phi(n) - n\mathbb E[1-(1-\mathrm e^{\gamma L})/\gamma]}{\sqrt n}
&= \frac{\Phi(n) - \mathbb E_{\bf L}[\Phi(n)]}{\sqrt{\mathrm{Var}_{\bf L}(\Phi(n))}}
\cdot \sqrt{\frac{\mathrm{Var}_{\bf L}(\Phi(n))}{n}}
+ \frac{\mathbb E_{\bf L}[\Phi(n)] - n\mathbb E[1-(1-\mathrm e^{\gamma L})/\gamma]}{\sqrt n}\\
&\Rightarrow \Omega_1 + \Omega_2,
\ea
where $\Omega_1$ and $\Omega_2$ are as in~\eqref{eq:def_Xi}.
To prove Proposition \ref{CLT Phi delta=1}, 
it only remains to calculate $\mathrm{Cov}(\Omega_1, \Omega_2)$.
To do so, note that
\[\mathrm{Cov}\big(\Phi(n), \mathbb E_{\bf L}[\Phi(n)]\big)
=\mathbb E\big[\Phi(n) \mathbb E_{\bf L}[\Phi(n)]\big]
-\mathbb E[\Phi(n)] \mathbb E[\mathbb E_{\bf L}[\Phi(n)]]
= \mathbb E\big[\mathbb E_{\bf L}[\Phi(n)]^2\big]
-\mathbb E\big[\mathbb E_{\bf L}[\Phi(n)]\big]^2,
\]
by the tower rule.
Thus,
\[\mathrm{Cov}\big(\Phi(n), \mathbb E_{\bf L}[\Phi(n)]\big)
=\mathrm{Var}(\mathbb E_{\bf L}[\Phi(n)])
= \mathrm{Var}\bigg(\sum_{i=1}^n\bigg(L_i - \frac{1- \mathrm e^{-\gamma L_i}}\gamma\bigg)\bigg),\]
by Lemma~\ref{lem:crit_exp_and_var}.
By independence,
\[\mathrm{Cov}\big(\Phi(n), \mathbb E_{\bf L}[\Phi(n)]\big)
= n\mathrm{Var}\bigg(L - \frac{1- \mathrm e^{-\gamma L}}\gamma\bigg).\]
Thus,
\ba\mathrm{Cov}(\Omega_1,\Omega_2)
&=\lim_{n\uparrow\infty}\mathrm{Cov}\bigg(\frac{\Phi(n) - \mathbb E_{\bf L}[\Phi(n)]}{\sqrt{n}}, 
\frac{\mathbb E_{\bf L}[\Phi(n)] - n\mathbb E[1-(1-\mathrm e^{\gamma L})/\gamma]}{\sqrt n}\bigg)\\
&= \lim_{n\uparrow\infty} \frac1n \mathrm{Cov}\big(\Phi(n), \mathbb E_{\bf L}[\Phi(n)]\big)
= \mathrm{Var}\bigg(L - \frac{1- \mathrm e^{-\gamma L}}\gamma\bigg),
\ea
which concludes the proof.

\subsection{Covariances}\label{sub:covariances}
The aim of this section is to prove Equation~\eqref{eq:covariances}.
Note that, for all $n\geq 1$,
\ba\mathrm{Cov}\bigg(\sum_{i=1}^n L_i, \mathbb E_{\bf L}[\Phi(n)]\bigg)
&= \mathrm{Cov}\bigg(\sum_{i=1}^n L_i, \sum_{i=1}^n\bigg(1 - \frac{1- \mathrm e^{-\gamma L_i}}\gamma\bigg)\bigg)
= \sum_{i=1}^n \sum_{j=1}^n  \mathrm{Cov}\bigg(L_i, \bigg(1 - \frac{1- \mathrm e^{-\gamma L_j}}\gamma\bigg)\bigg)\\
&= \sum_{i=1}^n  \mathrm{Cov}\bigg(L_i, \bigg(L_i - \frac{1- \mathrm e^{-\gamma L_i}}\gamma\bigg)\bigg)
= n\mathrm{Cov}\bigg(L, \bigg(L - \frac{1- \mathrm e^{-\gamma L}}\gamma\bigg)\bigg)\\
&= n\mathrm{Var}(L) + n\mathrm{Cov}(L, \mathrm e^{-\gamma L}/\gamma).
\ea
Thus, by definition of $\Psi$ (see~\eqref{eq:def_Phi}) and $\Omega_2$ (see~\eqref{eq:def_Xi})
\[\mathrm{Cov}(\Psi, \Omega_2)
= \lim_{n\to\infty} \mathrm{Cov}\bigg(\frac{\sum_{i=1}^n L_i - n\mathbb EL}{\sqrt n},
\frac{\mathbb E_{\bf L}[\Phi(n)] - n\mathbb E[1-(1-\mathrm e^{\gamma L})/\gamma]}{\sqrt n}\bigg)
= \mathrm{Var}(L) + \mathrm{Cov}(L, \mathrm e^{-\gamma L}/\gamma).\]
Finally, by the tower rule,
\[\mathrm{Cov}\bigg(\sum_{i=1}^n L_i, \Phi(n)\bigg)
= \mathrm{Cov}\bigg(\sum_{i=1}^n L_i, \mathbb E_{\bf L}[\Phi(n)]\bigg),
\]
which concludes the proof of~\eqref{eq:covariances}.

\section{Proof of Theorem~\ref{th:CLT_large}}\label{sec:large}
In this section, we assume that $\delta >1$.
As in the two previous cases, we need to first understand the asymptotic behaviour of $\Phi(n)$:
\begin{proposition}\label{Phi CLT delta > 1} 
If $\delta>1$, under the assumptions of Theorem~\ref{th:CLT_large}, then, conditionally on ${\bf L}$,
almost surely as $n\uparrow\infty$,
\begin{equation}\label{eq:large_CLT_Phi}
\Phi(n) =  \begin{cases}
T_n + \mathcal O(1) & \text{ if }\delta >2\\[5pt]
\displaystyle T_n - \sum_{i=1}^n \frac1{\gamma\delta T_i^{\delta-1}} + \mathcal O(1)
& \text{ if }\delta \in (\nicefrac32, 2]\\
\displaystyle T_n - \sum_{i=1}^n \frac1{\gamma\delta T_i^{\delta-1}} + o\big(n^{\nicefrac32-\delta}(\log n)^2\big)
& \text{ if }\delta \in (1, \nicefrac32],
\end{cases}
\end{equation}
\end{proposition}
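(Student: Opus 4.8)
The plan is to run the scheme of Sections~\ref{sec:small} and~\ref{sec:crit}: condition on ${\bf L}$, compute the conditional mean and variance of $\Phi(n)$, and then pass to an almost sure statement by concentration. Since, conditionally on ${\bf L}$, the sequences $(F_i)$ and $({\bf 1}_{i\prec n})$ are independent, Lemma~\ref{lem:magic} gives, exactly as in~\eqref{eq:crit_exp}--\eqref{eq:crit_var} (whose derivations are valid for any $\delta$),
\[
\mathbb E_{\bf L}[\Phi(n)]=\sum_{i=1}^n\frac{W_i\mathbb E_{\bf L}[F_i]}{\bar W_i},\qquad
\mathrm{Var}_{\bf L}(\Phi(n))=\sum_{i=1}^n\left(\frac{W_i}{\bar W_i}\mathbb E_{\bf L}[F_i^2]-\Big(\frac{W_i}{\bar W_i}\mathbb E_{\bf L}[F_i]\Big)^{2}\right).
\]

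For the mean I would copy the integration by parts and the substitution $u=(T_{i-1}+x)^\delta$ from the proof of Lemma~\ref{lem:small_expectation}. This gives $W_i\mathbb E_{\bf L}[F_i]=L_ie^{\gamma T_i^\delta}-\tfrac{T_i^{1-\delta}}{\gamma\delta}\big(e^{\gamma T_i^\delta}-e^{\gamma T_{i-1}^\delta}\big)-R_i$, where $R_i=\tfrac1\delta\int_{T_{i-1}^\delta}^{T_i^\delta}\big(u^{1/\delta-1}-T_i^{1-\delta}\big)e^{\gamma u}\,\mathrm du\ge0$, and after dividing by $\bar W_i=e^{\gamma T_i^\delta}-1$ and using $e^{\gamma T_{i-1}^\delta}/e^{\gamma T_i^\delta}=e^{-\gamma(T_i^\delta-T_{i-1}^\delta)}$,
\[
\frac{W_i\mathbb E_{\bf L}[F_i]}{\bar W_i}=L_i-\frac{1}{\gamma\delta T_i^{\delta-1}}+\frac{e^{-\gamma(T_i^\delta-T_{i-1}^\delta)}}{\gamma\delta T_i^{\delta-1}}-\frac{R_i}{\bar W_i}+\varepsilon_i,\qquad \varepsilon_i=\mathcal O\!\big(L_ie^{-\gamma T_i^\delta}\big).
\]
Summing over $i$, the main term is $T_n-\sum_{i=1}^n(\gamma\delta T_i^{\delta-1})^{-1}$; by Lemma~\ref{lem:sum t_i}(ii) this sum is $\mathcal O(1)$ when $\delta>2$, which is the first case of~\eqref{eq:large_CLT_Phi}, and for $\delta\in(1,2]$ it is left unevaluated as in the statement. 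The remainder $\sum_i R_i/\bar W_i$ is, after expanding $u^{1/\delta-1}-T_i^{1-\delta}$ to first order near $u=T_i^\delta$, of order $\sum_i T_i^{-(2\delta-1)}$, which is $\mathcal O(1)$ a.s.\ by Lemma~\ref{lem:sum t_i}(ii) precisely because $2\delta-1>1$ for $\delta>1$, and $\sum_i\varepsilon_i=\mathcal O(1)$ trivially. The genuinely delicate remainder is $\sum_i\tfrac{e^{-\gamma(T_i^\delta-T_{i-1}^\delta)}}{\gamma\delta T_i^{\delta-1}}$: using the convexity bounds $\delta T_{i-1}^{\delta-1}L_i\le T_i^\delta-T_{i-1}^\delta\le\delta T_i^{\delta-1}L_i$, $L_i/T_{i-1}\to0$ a.s.\ (from $\mathbb E[L^4]<\infty$, the strong law and Borel--Cantelli) and $T_i\sim i\,\mathbb E L$, one bounds each term with $e^{-x}\le x^{-1}$ and has to show the sum is $\mathcal O(1)$ when $\delta>3/2$ and $o\!\big(n^{3/2-\delta}(\log n)^2\big)$ when $\delta\in(1,3/2]$; this is where the argument really bites, and I expect it to require a finer split of the $i$'s according to the size of $L_i$.

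In parallel, the same integration by parts applied to $\mathbb E_{\bf L}[F_i^2]$ (as in Lemma~\ref{lem:crit_exp_and_var}) shows that, once the $L_i^2$ and $L_i/(\gamma\delta T_i^{\delta-1})$ contributions cancel against $(\tfrac{W_i}{\bar W_i}\mathbb E_{\bf L}[F_i])^2$, the $i$-th summand of $\mathrm{Var}_{\bf L}(\Phi(n))$ is $\mathcal O\!\big(T_i^{-2(\delta-1)}\big)$, so that by Lemma~\ref{lem:sum t_i} one gets $\mathrm{Var}_{\bf L}(\Phi(n))=\mathcal O(1)$ for $\delta>3/2$, $\mathcal O(\log n)$ for $\delta=3/2$, and $\mathcal O(n^{3-2\delta})$ for $\delta\in(1,3/2)$. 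To upgrade this second-moment bound to an almost sure one, for fixed $n$ Chebyshev's inequality conditionally on ${\bf L}$ gives $\mathbb P_{\bf L}\big(|\Phi(n)-\mathbb E_{\bf L}[\Phi(n)]|>t_n\big)\le t_n^{-2}\mathrm{Var}_{\bf L}(\Phi(n))$; choosing $t_n$ of order $\sqrt{\mathrm{Var}_{\bf L}(\Phi(n))}\,\log n$ and summing over $n$, Borel--Cantelli yields $\Phi(n)-\mathbb E_{\bf L}[\Phi(n)]=\mathcal O(1)$ a.s.\ for $\delta>3/2$ and $o\!\big(n^{3/2-\delta}(\log n)^2\big)$ a.s.\ for $\delta\in(1,3/2]$, which combined with the mean estimate gives~\eqref{eq:large_CLT_Phi}. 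The main obstacle, as flagged above, is the control of the truncation-type remainder $\sum_i\tfrac{e^{-\gamma(T_i^\delta-T_{i-1}^\delta)}}{\gamma\delta T_i^{\delta-1}}$ in the mean estimate; a secondary nuisance is that ${\bf 1}_{i\prec n}$ depends on $n$, so the Borel--Cantelli step must be run off the fixed-$n$ conditional variance, which is what forces the extra logarithmic factor in $t_n$ (and makes the sharp $\mathcal O(1)$ in the range $\delta>3/2$ require a little extra care).
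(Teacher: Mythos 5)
Your decomposition (conditional mean plus fluctuation) and your computation of the conditional mean follow the same route as the paper, and the main terms you extract are the right ones; but the two steps you yourself flag as open are precisely the content of the proposition, and neither of your proposed patches closes them. First, the remainder $\sum_{i\le n}\mathrm e^{-\gamma(T_i^\delta-T_{i-1}^\delta)}/(\gamma\delta T_i^{\delta-1})$ (equivalently, after the convexity bound $T_i^\delta-T_{i-1}^\delta\ge\delta L_iT_i^{\delta-1}$, the sum $\sum_{i\le n}\mathrm e^{-\gamma\delta L_iT_i^{\delta-1}}/(\gamma\delta T_i^{\delta-1})$) is exactly where the threshold $\delta=\nicefrac32$ and the error exponent $\nicefrac32-\delta$ come from, and you leave it unproved. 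The bound $\mathrm e^{-x}\le x^{-1}$ you suggest produces terms of order $1/(L_iT_i^{2(\delta-1)})$ and hence requires negative moments of $L$, which are not assumed (for exponential run-lengths $\mathbb E[1/L]=\infty$), and no ``finer split according to the size of $L_i$'' is supplied. The paper controls this sum by replacing $T_i$ by $i\mathbb EL/2$ (strong law) and applying a strong law for martingales (\cite[Theorem~1.3.15]{Duflo}) to $\sum_i i^{1-\delta}\mathrm e^{-\gamma\delta L_i(i\mathbb EL/2)^{\delta-1}}$, which yields $\mathcal O(1)$ for $\delta>\nicefrac32$ and $o\big(n^{\nicefrac32-\delta}(\log n)^2\big)$ for $\delta\in(1,\nicefrac32]$; some argument of this kind, exploiting the law of $L$ near $0$ only through quantities that are actually finite under the stated assumptions, is missing from your proposal.

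Second, your concentration step does not work as stated. Chebyshev with threshold $t_n\asymp\sqrt{\mathrm{Var}_{\bf L}(\Phi(n))}\,\log n$ gives probabilities of order $(\log n)^{-2}$, which are not summable in $n$, so Borel--Cantelli over all $n$ fails; and even if it could be run, for $\delta>\nicefrac32$ it would only give $\mathcal O(\log n)$, not the $\mathcal O(1)$ of the statement (you acknowledge this but do not resolve it). The $n$-dependence of ${\bf 1}_{i\prec n}$, which you treat as a nuisance to be worked around, is in fact resolved by the key structural observation the paper relies on: by the remark following~\eqref{eq:def_F} and Lemma~\ref{lem:magic}, conditionally on ${\bf L}$ the indicators $({\bf 1}_{i\prec n})_{1\le i\le n}$ can be realized, simultaneously for all $n$, as a single sequence $(B_i)_{i\ge1}$ of independent Bernoulli$(W_i/\bar W_i)$ variables, independent of $(F_i)_{i\ge1}$. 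Then $M_n=\sum_{i\le n}\big(F_iB_i-\mathbb E_{\bf L}[F_iB_i]\big)$ is a genuine martingale in $n$, and after the cancellation $\mathrm{Var}_{\bf L}(F_iB_i)\le 2(\delta-1)L_i^2/(\gamma\delta T_{i-1}^{\delta})$ (your cruder per-term bound $\mathcal O(T_i^{-2(\delta-1)})$ misses this cancellation) its bracket is a.s.\ finite for every $\delta>1$ by Lemma~\ref{lem:prel_large}; hence $\Phi(n)-\mathbb E_{\bf L}[\Phi(n)]=\mathcal O(1)$ a.s.\ for all $\delta>1$, with no Borel--Cantelli over $n$ and no loss in the range $\delta\in(1,\nicefrac32]$. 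Without this coupling (or some substitute for it), your fluctuation estimate is both logically incomplete and too weak in the case $\delta>\nicefrac32$.
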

Before proving Proposition~\ref{Phi CLT delta > 1}, we how how it implies Theorem~\ref{th:CLT_large}:

\begin{proof}[Proof of Theorem \ref{th:CLT_large}]
Since, by the strong law of large numbers, $i(t)\uparrow\infty$ almost surely as $t\uparrow\infty$, 
Proposition~\ref{Phi CLT delta > 1} implies that, almost surely as $t\uparrow\infty$,
\begin{equation}\label{CLT for Phi(u_i(t))}
\Phi(i(t)-1)
=   \begin{cases}
T_{i(t)-1} + \mathcal O(1) & \text{ if }\delta >2\\[5pt]
\displaystyle T_{i(t)-1} - \sum_{i=1}^{i(t)-1} \frac1{\gamma\delta T_i^{\delta-1}} + \mathcal O(1)
& \text{ if }\delta \in (\nicefrac32, 2]\\[5pt]
\displaystyle T_{i(t)-1} - \sum_{i=1}^{i(t)-1} \frac1{\gamma\delta T_i^{\delta-1}} + o(\sqrt{i(t)})
& \text{ if }\delta \in (1, \nicefrac32]
\end{cases}
\end{equation}
Recall that, by definition, $i(t)$ is the integer such that $T_{i(t)-1}\leq t < T_{i(t)-1} + L_{i(t)} = T_{i(t)}$.
By standard renewal theory, $L_{i(t)}$ converges in distribution as $t\uparrow\infty$ (see the proof of Lemma~\ref{lem:renewal}), and thus, $L_{i(t)} = \mathcal O(1)$ in probability as $t\uparrow\infty$. 
For $\delta\in (1,2]$, we have, on the one hand,
\[\sum_{i=1}^{i(t)-1}\frac1{\gamma\delta T_i^{\delta-1}}
\leq \sum_{i=1}^{i(t)-1}\int_{T_{i-1}}^{T_i} \frac1{\gamma\delta x^{\delta-1}} \mathrm dx
= \int_{0}^{T_{i(t)-1}} \frac1{\gamma\delta x^{\delta-1}} \mathrm dx
= \frac{T_{i(t)-1}^{2-\delta}}{\gamma\delta (2-\delta)} 
= \frac{t^{2-\delta}}{\gamma\delta (2-\delta)} + \mathcal O(1),
\]
and, on the other hand,
\[\sum_{i=1}^{i(t)-1}\frac1{\gamma\delta T_i^{\delta-1}}
\geq \sum_{i=1}^{i(t)-1}\int_{T_i}^{T_{i+1}} \frac1{\gamma\delta x^{\delta-1}} \mathrm dx
= \int_{T_1}^{T_{i(t)}} \frac1{\gamma\delta x^{\delta-1}} \mathrm dx
= \frac{T_{i(t)}^{2-\delta} - T_1^{2-\delta}}{\gamma\delta (2-\delta)}.
\]
Because $T_{i(t)} = t + \mathcal O(1)$ in probability as $t\uparrow\infty$, and because $T_1 = L_1 = \mathcal O(1)$ as well, we get
\[\sum_{i=1}^{i(t)-1}\frac1{\gamma\delta T_i^{\delta-1}} 
= \frac{t^{2-\delta}}{\gamma\delta (2-\delta)} + \mathcal O(1),
\]
in probability as $t\uparrow\infty$, if $\delta\in(1, 2]$.
Now recall that, in distribution for all $t\geq 0$, $S(t) = \Phi(i(t)-1) + A(t)$ (see~\eqref{eq:def_S}).
Thus, because $T_{i(t)-1} + A(t) = t$, by definition, we get
\[S(t) = \Phi(i(t)) + A(t) = 
\begin{cases}
t + \mathcal O(1) & \text{ if }\delta >2\\[3pt]
\displaystyle t - \frac{t^{2-\delta}}{\gamma\delta(2-\delta)\mathbb E[L]} + \mathcal O(1)
& \text{ if }\delta \in (\nicefrac32, 2]\\[7pt]
\displaystyle t - \frac{t^{2-\delta}}{\gamma\delta(2-\delta)\mathbb E[L]} + o(t^{\nicefrac32-\delta}(\log t)^2) & \text{ if }\delta \in (1, \nicefrac32].
\end{cases}\]
By definition of $s(t)$ (see~\eqref{eq:def_s} for $\delta>1$), 
we get 
\ba
\frac{{Z}({S}(t))-a(s(t))}{b(s(t))}
& = \frac{{Z}({S}(t))-a(S(t))}{b(S(t))}\cdot \frac{b({S}(t))}{b(s(t))} + \frac{a({S}(t))-a(s(t))}{b(s(t))}\\
& \Rightarrow \Gamma,
\ea
by Assumption {\bf (A2$_{\bs\delta}$)}.
This concludes the proof because $X(t) = Z(S(t))$ in distribution for all $t\geq 0$ (see Section~\ref{sec:prel}).
\end{proof}

\subsection{Preliminary lemmas}
We start with the following preliminary lemmas:
\begin{lemma}\label{lem:large_BC}
For all $\delta>1$, if $\mathbb E[L^2]<\infty$,
then, for all $\varepsilon>0$, 
almost surely for all $i$ large enough, $L_i/T_i\leq \varepsilon$.
\end{lemma}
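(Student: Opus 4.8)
The plan is to follow the same route as the proof of Lemma~\ref{lem:small_BC}: control $T_i$ from below via the strong law of large numbers, and control $L_i$ from above via a Borel--Cantelli argument. Since $\mathbb E[L^2]<\infty$ implies $\mathbb E[L]<\infty$, the strong law of large numbers gives $T_i=\sum_{j=1}^i L_j\sim i\,\mathbb E[L]$ almost surely as $i\uparrow\infty$; in particular, almost surely there is a (random) index $i_0$ such that $T_i\geq i\,\mathbb E[L]/2$ for every $i\geq i_0$. Consequently, it suffices to show that, almost surely for all $i$ large enough, $L_i\leq \tfrac{\varepsilon}{2}\, i\,\mathbb E[L]$, since on the event $\{i\geq i_0\}$ this forces $L_i\leq \tfrac{\varepsilon}{2}\, i\,\mathbb E[L]\leq \varepsilon\, T_i$.

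For the upper bound on $L_i$, I would fix $\varepsilon>0$ and apply Markov's inequality to $L_i^2$:
\[
\mathbb P\!\left(L_i>\tfrac{\varepsilon}{2}\, i\,\mathbb E[L]\right)
=\mathbb P\!\left(L_i^2>\tfrac{\varepsilon^2}{4}\, i^2\,\mathbb E[L]^2\right)
\leq \frac{4\,\mathbb E[L^2]}{\varepsilon^2\,\mathbb E[L]^2}\cdot\frac{1}{i^2},
\]
and the right-hand side is summable over $i\geq 1$. The (first) Borel--Cantelli lemma --- which needs no independence --- then yields that, almost surely, only finitely many of the events $\{L_i>\tfrac{\varepsilon}{2}\, i\,\mathbb E[L]\}$ occur. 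Intersecting this almost-sure event with the one coming from the strong law of large numbers concludes the proof.

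I do not expect a genuine obstacle here: the argument is entirely routine. The only points to keep in mind are that the statement is quantified ``for all $\varepsilon>0$'' with an $\varepsilon$-dependent threshold $i(\varepsilon)$ (exactly as in Lemma~\ref{lem:small_BC}), so no uniformity in $\varepsilon$ is claimed or needed, and that --- unlike the case $\delta<1$, where $T_{i-1}^{1-\delta}$ grows only like $i^{1-\delta}$ and forces the high-moment condition~\eqref{eq:def_p} --- here $T_i$ grows linearly, so the mild assumption $\mathbb E[L^2]<\infty$ is more than enough (indeed even $\mathbb E[L]<\infty$ would suffice, since then $\sum_{i\geq1}\mathbb P(L>\tfrac{\varepsilon}{2} i\,\mathbb E[L])\leq \tfrac{2}{\varepsilon}<\infty$).
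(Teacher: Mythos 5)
Your proof is correct and follows essentially the same route as the paper: the strong law of large numbers gives $T_i\geq i\,\mathbb E[L]/2$ eventually, and a Markov/Chebyshev bound on $L_i$ at the threshold $\varepsilon\, i\,\mathbb E[L]/2$ combined with Borel--Cantelli yields the claim. Your closing observation that $\mathbb E[L]<\infty$ would already suffice is a valid (if unnecessary) aside.
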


\begin{proof}
We prove this using Borel-Cantelli's lemma.
Indeed, first note that, by the strong law of large numbers, almost surely for all $i$ large enough, $T_i\geq i\mathbb E[L]/2$. Also, for all $i\geq 1$, for all $\varepsilon>0$,
\[\mathbb P(L_i \geq \varepsilon (i\mathbb E[L]/2))
\leq \frac{\mathbb E[L^2]}{\varepsilon^2 (i\mathbb E[L]/2)^{2}},\]
which is summable.
Thus, Borel-Cantelli's lemma implies that, almost surely for all $i$ large enough, 
\[L_iT_i^{-1}\leq L_i (i\mathbb E[L]/2)^{-1}<\varepsilon,\]
as desired.
\end{proof}

\begin{lemma}\label{lem:prel_large}
For all $\delta>1$, almost surely
\[\sum_{i\geq 1} \frac{L^2_i}{T_{i-1}^\delta}<\infty.\]
\end{lemma}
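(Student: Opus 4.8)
The plan is to reduce the statement to Lemma~\ref{lem:sum t_i}(ii), applied with the test function $g(x)=x^2$ (which satisfies $\mathbb E[g(L)],\mathrm{Var}(g(L))<\infty$ since $\mathbb E[L^4]<\infty$ under the assumptions of Theorem~\ref{th:CLT_large}) and exponent $\ell=\delta>1$. That lemma gives that, almost surely, $\sum_{i=1}^n L_i^2/T_i^\delta=\mathcal O(1)$ as $n\uparrow\infty$, hence that the series $\sum_{i\ge 1}L_i^2/T_i^\delta$ converges almost surely. It then remains only to replace $T_i$ by $T_{i-1}$ in the denominator, which costs a bounded factor for all large~$i$.

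To carry this out, note that by the strong law of large numbers $T_i\sim i\,\mathbb E[L]$ and $T_{i-1}\sim i\,\mathbb E[L]$ almost surely as $i\uparrow\infty$ (equivalently $L_i/T_{i-1}\to 0$ almost surely, as in Lemma~\ref{lem:large_BC}), so $T_i/T_{i-1}=1+L_i/T_{i-1}\to 1$ almost surely. Hence there is an almost surely finite index $i_0\ge 2$ such that $T_{i-1}\ge T_i/2$ for all $i\ge i_0$, whence
\[\sum_{i\ge i_0}\frac{L_i^2}{T_{i-1}^{\delta}}\le 2^{\delta}\sum_{i\ge i_0}\frac{L_i^2}{T_i^{\delta}}<\infty\]
almost surely. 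The remaining finitely many terms ($2\le i<i_0$) are almost surely finite because $T_{i-1}>0$ almost surely for each $i\ge 2$; adding them gives the claim.

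I do not expect any real obstacle here: the argument is a routine combination of Lemma~\ref{lem:sum t_i}(ii) with the law of large numbers. The only subtlety worth flagging --- and the reason one goes through Lemma~\ref{lem:sum t_i}(ii) rather than a one-line Tonelli computation --- is that $\mathbb E[T_{i-1}^{-\delta}]$ (and $\mathbb E[T_i^{-\delta}]$) may be infinite for small $i$ when the law $\phi$ charges a neighbourhood of the origin, so the control must come from the almost-sure linear growth of $T_i$ together with the summability $\sum_i i^{-\delta}<\infty$ (which uses $\delta>1$), exactly as in the proof of Lemma~\ref{lem:sum t_i}(ii). If one preferred not to quote Lemma~\ref{lem:sum t_i}, the same conclusion follows directly: on the almost sure event $\{T_{i-1}\ge i\,\mathbb E[L]/2$ for all $i\ge i_0\}$ one bounds $L_i^2/T_{i-1}^{\delta}\le (2/\mathbb E[L])^{\delta}\,L_i^2/i^{\delta}$, and $\sum_i L_i^2/i^{\delta}$ has finite expectation $\mathbb E[L^2]\sum_i i^{-\delta}<\infty$ by Tonelli, hence is almost surely finite.
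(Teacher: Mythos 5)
Your proof is correct. Your main route --- Lemma~\ref{lem:sum t_i}(ii) applied with $g(x)=x^2$ and $\ell=\delta>1$, followed by the observation that $T_i/T_{i-1}\to 1$ almost surely so that replacing $T_i$ by $T_{i-1}$ in the denominator costs only a factor $2^{\delta}$ for large $i$ --- is in substance the same argument as the paper's: the paper does not quote Lemma~\ref{lem:sum t_i} but re-runs its proof inline, using the strong law of large numbers to reduce to $\sum_i L_i^2\, i^{-\delta}$ and controlling the centred part $\sum_i (L_i^2-\mathbb E[L^2])\,i^{-\delta}$ by an $L^2$-bounded martingale (this is where $\mathbb E[L^4]<\infty$ enters, through $\mathrm{Var}(L^2)$). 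Quoting the lemma is legitimate even though it appears in the $\delta<1$ section, since neither its statement nor its proof involves $\delta$. Your alternative Tonelli argument is genuinely lighter and is the more elementary route: once $T_{i-1}\geq c\,i$ for all large $i$ almost surely, the bound $\mathbb E\big[\sum_i L_i^2\, i^{-\delta}\big]=\mathbb E[L^2]\sum_i i^{-\delta}<\infty$ settles everything using only $\mathbb E[L^2]<\infty$ and no martingale, showing in passing that the fourth-moment hypothesis is not needed for this particular lemma. One cosmetic point, shared with the paper's own proof: the $i=1$ term has $T_0=0$, so the series should be read as starting at $i=2$ (equivalently, the claim concerns the tail of the series), which is exactly what your ``finitely many initial terms'' step implicitly does.
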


\begin{proof}
First note that, by the strong law of large numbers, $T_i \sim i\mathbb E[L]$ almost surely as $i\uparrow\infty$. Thus, $\sum_{i\geq 1} L^2_i T_{i-1}^{-\delta}$ converges if and only if $\sum_{i\geq 1} L^2_i i^{-\delta}$ does. Now, we write
\[\sum_{i\geq 1} L^2_i i^{-\delta}
=  \mathbb E[L] \sum_{i\geq 1} i^{-\delta}
+ \sum_{i\geq 1} (L^2_i-\mathbb E[L^2]) i^{-\delta}.\]
The first sum above is convergent because $\delta >1$. For the second sum, note that
$(M_n := \sum_{i=1}^n (L^2_i-\mathbb E[L^2]) i^{-\delta})_{n\geq 1}$ is a martingale whose quadratic variation satisfies
\[\langle M\rangle_n =\sum_{i=1}^n \frac{\mathrm{Var}(L^2)}{i^{2\delta}}
\leq \mathbb E[L^4] \sum_{i\geq 1}\frac1{i^{2\delta}}<\infty,\]
because $\delta>1$.
\end{proof}

\begin{lemma}\label{lem:sum_power_Ti}
Almost surely as $i\uparrow\infty$,
\[\sum_{i=1}^n \frac1{T_i^{\delta-1}} =
\begin{cases}
\mathcal O(1) & \text{ if }\delta>2\\
\displaystyle \frac{n^{2-\delta}}{(2-\delta)\mathbb E[L]^{\delta-1}} + \mathcal O(1) & \text{ if }\delta\in (\nicefrac32, 2]\\[7pt]
\displaystyle \frac{n^{2-\delta}}{(2-\delta)\mathbb E[L]^{\delta-1}} + \mathcal O\big(n^{\frac32 - \delta}\sqrt{\log n}\big) & \text{ if }\delta\in (1, \nicefrac32].
\end{cases}
\]
\end{lemma}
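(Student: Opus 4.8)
The plan is to replace $T_i$ by its mean $\mathbb E[L]\,i$, thereby reducing the random sum to the deterministic sums $\sum_{i=1}^n i^{1-\delta}$, which are evaluated by comparison with $\int x^{1-\delta}\,\mathrm dx$; the fluctuations of $T_i$ around its mean will only feed the error term.

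First I would record, exactly as in the proof of Lemma~\ref{lem:i(t)}, that the law of the iterated logarithm gives $T_i=\mathbb E[L]\,i+\mathcal O(\sqrt{i\log i})$ almost surely as $i\uparrow\infty$; in particular $T_i\ge\nicefrac12\,\mathbb E[L]\,i$ for all $i$ large enough. Writing $D_i:=T_i-\mathbb E[L]\,i=\mathcal O(\sqrt{i\log i})$, for $i$ large enough that $|D_i|\le\nicefrac12\,\mathbb E[L]\,i$ the elementary bound $|(1+x)^{-(\delta-1)}-1|\le(\delta-1)2^{\delta}|x|$ valid for $|x|\le\nicefrac12$ gives
\[
\frac{1}{T_i^{\delta-1}}
=\frac{1}{(\mathbb E[L]\,i)^{\delta-1}}\Bigl(1+\frac{D_i}{\mathbb E[L]\,i}\Bigr)^{-(\delta-1)}
=\frac{1}{\mathbb E[L]^{\delta-1}\,i^{\delta-1}}+E_i,
\qquad
|E_i|\le C\,\frac{|D_i|}{i^{\delta}}=\mathcal O\Bigl(\frac{\sqrt{\log i}}{i^{\delta-\nicefrac12}}\Bigr)
\]
almost surely, with $C=(\delta-1)2^{\delta}/\mathbb E[L]^{\delta}$ a deterministic constant.

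Next I would sum the two pieces separately. Comparison with integrals yields $\sum_{i=1}^n i^{1-\delta}=\mathcal O(1)$ for $\delta>2$, $\sum_{i=1}^n i^{-1}=\log n+\mathcal O(1)$ for $\delta=2$, and $\sum_{i=1}^n i^{1-\delta}=\frac{n^{2-\delta}}{2-\delta}+\mathcal O(1)$ for $\delta\in(1,2)$; dividing by $\mathbb E[L]^{\delta-1}$ gives the claimed main term (the borderline value $\delta=2$, where the stated expression is formally $\nicefrac10$, being understood as $\log n/\mathbb E[L]+\mathcal O(1)$). For the error, the tail $\sum_{i\ge i_0}|E_i|=\mathcal O\bigl(\sum_{i\ge i_0}i^{-(\delta-\nicefrac12)}\sqrt{\log i}\bigr)$ is convergent, hence $\mathcal O(1)$, when $\delta>\nicefrac32$, and its partial sum up to $n$ is $\mathcal O(n^{\nicefrac32-\delta}\sqrt{\log n})$ when $\delta\in(1,\nicefrac32)$; at the single endpoint $\delta=\nicefrac32$ the same computation gives $\mathcal O((\log n)^{\nicefrac32})$, which is still $o((\log n)^2)$ and so harmless for the use made of the lemma in Proposition~\ref{Phi CLT delta > 1}. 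Finally, absorbing the finitely many initial terms (almost surely finite) into the error and combining with the main-term estimates produces the three stated regimes.

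There is no deep conceptual obstacle here; the one point requiring care is not to be wasteful when bounding the fluctuation term: using only $D_i=o(i^{\nicefrac12+\varepsilon})$ (from Marcinkiewicz--Zygmund) instead of the LIL scale $\mathcal O(\sqrt{i\log i})$ would already corrupt the exponent $\nicefrac32-\delta$ in the error for $\delta\in(1,\nicefrac32)$, and one must keep an eye on the exact power of $\log$ accumulated in $\sum_i i^{-(\delta-\nicefrac12)}\sqrt{\log i}$ near $\delta=\nicefrac32$.
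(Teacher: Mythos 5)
The paper states this lemma without giving any proof, so there is nothing to compare line by line; your argument is exactly the route the authors use for the analogous Lemma~\ref{lem:sum t_i}\,(i) (replace $T_i$ by $\mathbb E[L]\,i$ via the law of the iterated logarithm, sum the deterministic part by integral comparison, and control the fluctuation term by a crude absolute-value bound), and it is correct as written for every $\delta>1$ except at the two endpoints you yourself flag. At $\delta=2$ the displayed formula in the lemma is formally $\nicefrac{n^0}{0}$ and your reading $\log n/\mathbb E[L]+\mathcal O(1)$ is the correct one (it is also what Lemma~\ref{lem:sum t_i}\,(iii) gives with $g\equiv1$). At $\delta=\nicefrac32$ your bound $\mathcal O\bigl((\log n)^{\nicefrac32}\bigr)$ is weaker than the stated $\mathcal O(\sqrt{\log n})$, but the fault lies with the statement rather than with your proof: expanding $T_i^{-\nicefrac12}$ to first order shows that the fluctuation of $\sum_{i\le n}T_i^{-\nicefrac12}$ around $2\sqrt{n/\mathbb E[L]}$ is governed (up to $\mathcal O(1)$ and a $\mathcal O(\sqrt{\log\log n})$ boundary term) by the weighted sum $\sum_{j\le n}(L_j-\mathbb E[L])\,j^{-\nicefrac12}$, whose conditional variance grows like $\mathrm{Var}(L)\log n$, so by the law of the iterated logarithm it exceeds any constant multiple of $\sqrt{\log n}$ infinitely often whenever $\mathrm{Var}(L)>0$; the claimed $\sqrt{\log n}$ at that single point is therefore not attainable, while any bound that is $o\bigl(n^{\nicefrac32-\delta}(\log n)^2\bigr)$ — such as yours — is all that Proposition~\ref{Phi CLT delta > 1} and Lemma~\ref{lem:large_lem_CLT2} require. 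Apart from these endpoint remarks, your computation of the main term constant $\frac{n^{2-\delta}}{(2-\delta)\mathbb E[L]^{\delta-1}}$ and of the error exponents is accurate, and your caution about needing the LIL scale $\mathcal O(\sqrt{i\log i})$ (rather than merely $o(i^{\nicefrac12+\varepsilon})$) to preserve the exponent $\nicefrac32-\delta$ is well taken.
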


\begin{lemma}\label{lem:sum_powerexp_Ti}
Almost surely as $i\uparrow\infty$,
\[\sum_{i=1}^n \frac{\mathrm e^{-\gamma\delta L_i T_i^{\delta-1}}}{T_i^{\delta-1}} =
\mathcal O(1).\]
\end{lemma}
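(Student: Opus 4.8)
The plan is to prove the estimate pathwise, after conditioning on $\mathbf L=(L_i)_{i\ge1}$, so that every $T_i$ is deterministic; the content of the lemma is then an almost-sure statement about the i.i.d.\ sequence $(L_i)_{i\ge1}$. First I would use the strong law of large numbers to reduce to a tail sum: almost surely $T_i=i\,\mathbb E[L]+o(i)$, so there are a (random, a.s.\ finite) index $i_0$ and a constant $c>0$ with $T_i\ge c i$ for all $i\ge i_0$, and the first $i_0$ terms contribute a finite amount. In the regime $\delta>2$ this already finishes the proof: bounding $\mathrm e^{-\gamma\delta L_i T_i^{\delta-1}}\le1$ leaves $\sum_{i\ge i_0}T_i^{-(\delta-1)}\le\sum_{i\ge i_0}(ci)^{-(\delta-1)}<\infty$ since $\delta-1>1$ (cf.\ Lemma~\ref{lem:sum_power_Ti}). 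So from now on assume $\delta\in(1,2]$, where $\sum_i T_i^{-(\delta-1)}$ diverges and the exponential factor is essential.

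Next I would split each summand according to the size of $L_i$. Fix a threshold $\theta_i$ of order $i^{1-\delta}\log i$, say $\theta_i=\frac{(\delta+1)\log i}{\gamma\delta(ci)^{\delta-1}}$, and write the $i$-th term as the sum of its restrictions to $\{L_i\ge\theta_i\}$ and $\{L_i<\theta_i\}$. On $\{L_i\ge\theta_i\}$ one has $\gamma\delta L_i T_i^{\delta-1}\ge(\delta+1)\log i$, hence $\mathrm e^{-\gamma\delta L_i T_i^{\delta-1}}\le i^{-(\delta+1)}$ and the term is at most $(ci)^{-(\delta-1)}i^{-(\delta+1)}\le c^{1-\delta}i^{-2\delta}$; since $\delta>1$, this part of the series is $\mathcal O(1)$ almost surely. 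This is the easy half: wherever $L_i$ is not atypically small, the super-polynomial decay of the exponential beats the polynomial $T_i^{-(\delta-1)}$.

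The contribution of the events $\{L_i<\theta_i\}$ is the heart of the matter. There I would use only $\mathrm e^{-\gamma\delta L_i T_i^{\delta-1}}\le1$, so this part is at most $\sum_{i\ge i_0}(ci)^{-(\delta-1)}\mathbf 1_{\{L_i<\theta_i\}}$, a series of independent, bounded, nonnegative random variables with $\theta_i\downarrow0$. By the Kolmogorov three-series theorem it converges almost surely once $\sum_i (ci)^{-(\delta-1)}\mathbb P(L<\theta_i)$ and $\sum_i(ci)^{-2(\delta-1)}\mathbb P(L<\theta_i)$ are finite, which is checked from the behaviour of $\phi$ near $0$ together with the moment hypothesis $\mathbb E[L^4]<\infty$, exactly in the spirit of the Borel--Cantelli arguments used for Lemmas~\ref{lem:large_BC} and~\ref{lem:prel_large}. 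I expect this last step --- controlling how many indices $i$ have $L_i$ of the critical order $T_i^{-(\delta-1)}$, for which the summand is comparable to the non-summable $T_i^{-(\delta-1)}$ --- to be the main obstacle, and it is precisely where the fine subdivision of the range of $\delta$ (and the strength of the moment assumption) comes into play. Combining the two halves then yields $\sum_{i=1}^n \frac{\mathrm e^{-\gamma\delta L_i T_i^{\delta-1}}}{T_i^{\delta-1}}=\mathcal O(1)$ almost surely.
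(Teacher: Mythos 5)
Your handling of $\delta>2$ and of the indices with $L_i\geq\theta_i$ is fine, but the small-$L_i$ step is a genuine gap, and under the stated hypotheses it cannot be closed. The assumptions of Theorem~\ref{th:CLT_large} control only the upper tail of $L$ (through $\mathbb E[L^4]<\infty$) and say nothing about $\mathbb P(L<x)$ as $x\downarrow 0$, so there is nothing from which to ``check'' the two series you write down. Concretely, take $\phi$ standard exponential (the paper's running example) and $\delta\in(1,\nicefrac32]$: then $\mathbb P(L<\theta_i)\asymp\theta_i\asymp i^{1-\delta}\log i$, so $\sum_i i^{-(\delta-1)}\mathbb P(L<\theta_i)\asymp\sum_i i^{2-2\delta}\log i=\infty$. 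This is not an artefact of your truncation: since $T_i\leq 2i\,\mathbb E[L]$ eventually almost surely, each summand dominates $Z_i:=(2i\,\mathbb E L)^{1-\delta}\exp\big(-\gamma\delta L_i(2i\,\mathbb E L)^{\delta-1}\big)$, and the $Z_i$ are independent, nonnegative, uniformly bounded, with $\mathbb E[Z_i]\asymp i^{-2(\delta-1)}$ (because $\mathbb E[\mathrm e^{-\lambda L}]\sim\lambda^{-1}$ for exponential $L$); as $\sum_i i^{-2(\delta-1)}=\infty$ when $\delta\leq\nicefrac32$, the three-series theorem forces $\sum_i Z_i=\infty$ almost surely, hence the sum in the lemma diverges. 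So for $\delta\in(1,\nicefrac32]$ the $\mathcal O(1)$ bound is false without an additional lower-tail assumption on $L$; and even for $\delta\in(\nicefrac32,2]$ your small-$L_i$ estimate needs $\mathbb P(L<\theta_i)$ to decay, which is again a lower-tail condition not present in the hypotheses (there the conclusion does hold for, e.g., exponential $L$, but by taking expectations of the exponential rather than by bounding it by $1$ on $\{L_i<\theta_i\}$).

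For comparison, the paper's own argument for this lemma takes a different route: after the same crude bound for $\delta>2$, it uses $x^{\alpha}\mathrm e^{-x}\leq(\alpha/\mathrm e)^{\alpha}$ to bound the sum by a constant times $\sum_i L_i^{-\alpha}T_i^{-(\delta-1)(\alpha+1)}$ with $\alpha$ chosen so that $(\delta-1)(\alpha+1)>1$ --- which implicitly requires a negative moment $\mathbb E[L^{-\alpha}]<\infty$, i.e.\ exactly the kind of lower-tail control missing above, and as printed the argument stops before drawing a conclusion. More tellingly, where an estimate of this type is actually needed, namely display \eqref{eq:large_aim2} in the proof of Lemma~\ref{lem:large_lem_CLT2}, the paper claims $\mathcal O(1)$ only for $\delta>\nicefrac32$ and settles for the weaker $o\big(n^{\nicefrac32-\delta}(\log n)^2\big)$ when $\delta\in(1,\nicefrac32]$, proved by a martingale argument after replacing $T_i$ by $i\mathbb E L/2$. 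So your instinct that the small-$L_i$ regime is the heart of the matter is right, but the fix is not a finer Borel--Cantelli/three-series estimate: either the claim must be weakened for $\delta\in(1,\nicefrac32]$ as in \eqref{eq:large_aim2}, or a hypothesis on the lower tail of the run-length distribution must be added. (A minor point: your opening device of ``conditioning on $\mathbf L$ so that the $T_i$ are deterministic'' is circular, since conditioning on $\mathbf L$ freezes the $L_i$ as well; this does not affect the rest, which is an unconditional argument.)
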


\begin{proof}
First note that, for all $\delta>2$,
\[\sum_{i=1}^n \frac{\mathrm e^{-\gamma\delta L_i T_i^{\delta-1}}}{T_i^{\delta-1}} 
\leq \sum_{i=1}^n \frac1{T_i^{\delta-1}} <\infty,\]
by Lemma~\ref{lem:sum_power_Ti},
which concludes the proof.
We first note that, for all $\alpha\geq 1$, for all $x\geq 0$, $F_\alpha(x):=x^{\alpha}\mathrm e^{-x}\leq (\alpha/\mathrm e)^\alpha$.
Indeed, for all $x\geq 0$, $F'_{\alpha}(x) = (\alpha - x)x^{\alpha-1}\mathrm e^{-x}$ and thus $F$ reaches its maximum at $x= \alpha$ and this maximum is $F(\alpha) = (\alpha/\mathrm e)^\alpha$, as claimed.
Thus, for all $\alpha\geq 1$,
\[\sum_{i=1}^n \frac{\mathrm e^{-\gamma\delta L_i T_i^{\delta-1}}}{T_i^{\delta-1}}
\leq \Big(\frac\alpha{\mathrm e}\Big)^\alpha 
\sum_{i=1}^n \frac{1}{(\gamma\delta L_i)^\alpha T_i^{(\delta-1)(\alpha+1)}}
\]

\end{proof}

\begin{lemma}\label{lem:analysis}
For all $\delta>1$, for all $x\in[0,1]$,
\[1-\delta x\leq (1-x)^\delta\leq 1-\delta x+\delta(\delta-1) x^2\]
\end{lemma}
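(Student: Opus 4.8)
The plan is to prove the two inequalities separately, in both cases by a short one-variable calculus argument; everything is elementary, and the only mildly delicate point is the range $\delta\in(1,2)$.

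For the lower bound I would set $h(x)=(1-x)^{\delta}-1+\delta x$ on $[0,1]$ and observe $h(0)=0$ together with
\[
h'(x)=\delta\bigl(1-(1-x)^{\delta-1}\bigr).
\]
Since $1-x\in[0,1]$ and $\delta-1\ge 0$ we have $(1-x)^{\delta-1}\le 1$, so $h'\ge 0$; hence $h$ is nondecreasing and $h\ge h(0)=0$ on $[0,1]$, i.e.\ $(1-x)^{\delta}\ge 1-\delta x$. (This is just Bernoulli's inequality $(1+y)^{\delta}\ge 1+\delta y$ at $y=-x$, or convexity of $t\mapsto t^{\delta}$ on $[0,\infty)$ evaluated against its tangent at $t=1$.)

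For the upper bound I would introduce $g(x)=1-\delta x+\delta(\delta-1)x^{2}-(1-x)^{\delta}$ on $[0,1]$ and aim to show $g\ge 0$. First I would record the three values $g(0)=0$, $g(1)=(\delta-1)^{2}\ge 0$, and $g'(0)=0$, where $g'(x)=-\delta+2\delta(\delta-1)x+\delta(1-x)^{\delta-1}$. Writing $\psi(x)=g'(x)/\delta=-1+2(\delta-1)x+(1-x)^{\delta-1}$, one has $\psi(0)=0$ and $\psi'(x)=(\delta-1)\bigl(2-(1-x)^{\delta-2}\bigr)$. As $x$ runs over $[0,1)$, the factor $(1-x)^{\delta-2}$ equals $1$ at $x=0$ and is monotone in $x$ (nonincreasing if $\delta\ge 2$, increasing to $+\infty$ if $\delta\in(1,2)$); hence $\psi'$ is either positive throughout $(0,1)$ or positive on an initial interval $(0,x_{0})$ and negative on $(x_{0},1)$. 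In both cases $\psi$ starts at $\psi(0)=0$ and is ``increasing then decreasing'', so it is nonnegative on an initial interval and crosses zero at most once afterwards; consequently $g'\ge 0$ on some $(0,x_{1})$ and $g'\le 0$ on $(x_{1},1)$, with $x_{1}=1$ in the degenerate case. Therefore $g$ is nondecreasing on $[0,x_{1}]$ and nonincreasing on $[x_{1},1]$, so its minimum over $[0,1]$ is attained at an endpoint: $\min_{[0,1]}g=\min\{g(0),g(1)\}=\min\{0,(\delta-1)^{2}\}=0$. Hence $g\ge 0$ on $[0,1]$, which is exactly the asserted upper bound. The one place that needs attention is $\delta\in(1,2)$, where $g''$ changes sign on $(0,1)$ so one cannot simply invoke convexity of $g$; it is precisely the identity $g(1)=(\delta-1)^{2}\ge 0$ that makes the endpoint-minimum argument work. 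Everything else is routine differentiation and the monotonicity of $x\mapsto(1-x)^{\delta-2}$.
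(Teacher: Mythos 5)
Your proof is correct. Note that the paper states Lemma~\ref{lem:analysis} without any proof (it is treated as an elementary fact), so there is no argument of the authors to compare against; your calculus argument supplies exactly the missing details. The lower bound is indeed just Bernoulli's inequality, and for the upper bound your handling of the only delicate regime $\delta\in(1,2)$ is sound: there $g''(x)=\delta(\delta-1)\bigl(2-(1-x)^{\delta-2}\bigr)$ changes sign once from positive to negative, so $g'$ (which vanishes at $0$) is first nonnegative and then crosses zero at most once, the minimum of $g$ is attained at an endpoint, and the evaluation $g(1)=(\delta-1)^2\ge 0$ closes the argument — this endpoint check is genuinely needed, since the sharper Taylor coefficient $\delta(\delta-1)/2$ would make the inequality fail near $x=1$ for $\delta\in(1,2)$, which is presumably why the lemma is stated with the factor $\delta(\delta-1)$.
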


\subsection{Proof of Proposition \ref{Phi CLT delta > 1}}

To prove Proposition~\ref{Phi CLT delta > 1}, we first write
\[\Phi(n) 
= \sum_{i=1}^n \big(F_i{\bf 1}_{i\prec n}-\mathbb E_{\bf L}[F_i{\bf 1}_{i\prec n}]\big)+ \sum_{i=1}^n \mathbb E_{\bf L}\big[F_i{\bf 1}_{i\prec n}\big].\]
We treat the two sums in two separate lemmas:

\begin{lemma}\label{lem:large_lem_CLT}
Almost surely as $n\uparrow\infty$,
\[\sum_{i=1}^n \big(F_i{\bf 1}_{i\prec n}-\mathbb E_{\bf L}[F_i{\bf 1}_{i\prec n}]\big)
= \mathcal O(1).\]
\end{lemma}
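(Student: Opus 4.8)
The plan is to exhibit $\sum_{i=1}^n (F_i{\bf 1}_{i\prec n}-\mathbb E_{\bf L}[F_i{\bf 1}_{i\prec n}])$ as (the tail of) an almost surely convergent series, using Theorem~\ref{th:SLLN_Petrov} with $\alpha=2$ and bounded normalising sequence $a_i\equiv 1$, which is exactly the Kolmogorov-type criterion for a.s.\ convergence of a series of independent centred random variables. Write $\Delta_i = F_i{\bf 1}_{i\prec n}-\mathbb E_{\bf L}[F_i{\bf 1}_{i\prec n}]$; conditionally on ${\bf L}$ (and ${\bf F}$) these are independent and centred, with $\mathrm{Var}_{{\bf L},{\bf F}}(F_i{\bf 1}_{i\prec n}) = F_i^2 \frac{W_i}{\bar W_i}(1-\frac{W_i}{\bar W_i})$ by Lemma~\ref{lem:magic}. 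The key estimate is the analogue of~\eqref{W_i/S_i approximation} in the regime $\delta>1$: since $L_i/T_{i-1}\to 0$ a.s.\ by Lemma~\ref{lem:large_BC}, one has $T_i^\delta - T_{i-1}^\delta = \gamma^{-1}\cdot\gamma\delta L_i T_i^{\delta-1}(1+o(1)) = \mathcal O(L_i T_i^{\delta-1})$, and since $\delta>1$ this quantity tends to infinity, so in fact $W_i/\bar W_i = 1-\mathrm e^{-\gamma(T_i^\delta-T_{i-1}^\delta)} = 1 - \mathcal O(\mathrm e^{-c\gamma\delta L_i T_i^{\delta-1}})$ for some $c>0$; in particular $1-W_i/\bar W_i = \mathcal O(\mathrm e^{-\gamma\delta L_i T_i^{\delta-1}}\,)$ up to the same correction. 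Combining, and using $F_i\le L_i$ a.s.,
\[
\mathrm{Var}_{{\bf L},{\bf F}}(F_i{\bf 1}_{i\prec n})
= F_i^2 \tfrac{W_i}{\bar W_i}\Big(1-\tfrac{W_i}{\bar W_i}\Big)
= \mathcal O\big(L_i^2\, \mathrm e^{-\gamma\delta L_i T_i^{\delta-1}}\big).
\]

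Now I would bound the sum of these conditional variances. Using the elementary inequality $L_i^2\,\mathrm e^{-\gamma\delta L_i T_i^{\delta-1}} \le (\gamma\delta)^{-2} T_i^{-2(\delta-1)}\cdot (\gamma\delta L_i T_i^{\delta-1})^2\mathrm e^{-\gamma\delta L_i T_i^{\delta-1}} = \mathcal O(T_i^{-2(\delta-1)})$ (bounding $x^2\mathrm e^{-x}$ by a constant, as in the proof of Lemma~\ref{lem:sum_powerexp_Ti}), one gets $\sum_{i\geq 1}\mathrm{Var}_{{\bf L},{\bf F}}(F_i{\bf 1}_{i\prec n}) = \mathcal O\big(\sum_{i\ge 1}T_i^{-2(\delta-1)}\big)$. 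Since $\delta>1$ we have $2(\delta-1)>0$; when $2(\delta-1)>1$ (i.e.\ $\delta>3/2$) the series converges by Lemma~\ref{lem:sum_power_Ti} (after reindexing $\delta-1\leadsto 2(\delta-1)$), and when $\delta\in(1,3/2]$ one instead uses the cruder bound $L_i^2\mathrm e^{-\gamma\delta L_i T_i^{\delta-1}}\le (\gamma\delta)^{-\alpha}T_i^{-\alpha(\delta-1)}(\gamma\delta L_i T_i^{\delta-1})^{\alpha}\mathrm e^{-\gamma\delta L_i T_i^{\delta-1}}\cdot L_i^{2-\alpha} = \mathcal O(L_i^{2-\alpha}T_i^{-\alpha(\delta-1)})$ with $\alpha$ chosen large enough that $\alpha(\delta-1)>1$ (noting $\mathbb E[L^{2-\alpha}]\le 1+\mathbb E[L^2]<\infty$ for $\alpha\ge 2$, so in fact one can keep $\alpha=2$ and $L_i^{2-\alpha}=1$, then Lemma~\ref{lem:sum_power_Ti} with exponent $2(\delta-1)\in(0,1]$ only gives a diverging bound — so here one genuinely needs $\alpha>1/(\delta-1)$). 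This forces one more step: take $\alpha = \lceil 1/(\delta-1)\rceil+1 \ge 2$ and use $L_i^{2-\alpha}\le \max(1,L_i^2)\le 1+L_i^2$, so $\sum_i \mathrm{Var}_{{\bf L},{\bf F}}(F_i{\bf 1}_{i\prec n}) = \mathcal O\big(\sum_i (1+L_i^2)T_i^{-\alpha(\delta-1)}\big)$, which converges a.s.\ by Lemma~\ref{lem:prel_large} (or a trivial variant) since $\alpha(\delta-1)>1$ and $\mathbb E[L^2]<\infty$.

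With $\sum_{i\ge 1}\mathrm{Var}_{{\bf L},{\bf F}}(F_i{\bf 1}_{i\prec n})<\infty$ a.s., Theorem~\ref{th:SLLN_Petrov} (applied with $a_i\equiv 1$, or equivalently the Kolmogorov two-series theorem / $L^2$-martingale convergence for the partial sums of the $\Delta_i$) gives that $\sum_{i=1}^n\Delta_i$ converges a.s.\ as $n\uparrow\infty$, hence is $\mathcal O(1)$ a.s., which is the claim. Strictly, ${\bf 1}_{i\prec n}$ depends on $n$, so I would phrase this via the tail-sum martingale: for fixed $n$, $\sum_{i=1}^n \Delta_i$ has conditional variance $\sum_{i=1}^n\mathrm{Var}_{{\bf L},{\bf F}}(F_i{\bf 1}_{i\prec n})\le \sum_{i\ge 1}\mathcal O((1+L_i^2)T_i^{-\alpha(\delta-1)})$, a bound uniform in $n$ and a.s.\ finite; Chebyshev plus a Borel--Cantelli argument along a subsequence, or directly the fact that $({\bf 1}_{i\prec n})_n$ is itself a (reverse) martingale in the weighted-recursive-tree filtration, then yields $\sup_n |\sum_{i=1}^n \Delta_i| <\infty$ a.s. The main obstacle is precisely this last bookkeeping point — the dependence of the summands on the index $n$ of the partial sum — together with choosing the exponent $\alpha$ correctly in the borderline range $\delta\in(1,3/2]$ so that the variance series still converges; everything else is a direct transcription of the $\delta<1$ arguments in Section~\ref{sec:small}.
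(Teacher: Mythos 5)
There are two genuine gaps. First, you condition on both ${\bf L}$ and ${\bf F}$, so your variance computation only controls the fluctuations of $\sum_i F_i\big({\bf 1}_{i\prec n}-W_i/\bar W_i\big)$; but the lemma centres the sum at $\mathbb E_{\bf L}[F_i{\bf 1}_{i\prec n}]=\mathbb E_{\bf L}[F_i]\,W_i/\bar W_i$, so you are still missing the piece $\sum_i (F_i-\mathbb E_{\bf L}[F_i])\,W_i/\bar W_i$, which your proposal never addresses. This piece is not negligible: since $W_i/\bar W_i\to 1$ here, its conditional variance is essentially $\sum_i \mathrm{Var}_{\bf L}(F_i)$, and for $\delta>1$ the variable $L_i-F_i$ is roughly exponential with rate $\gamma\delta T_i^{\delta-1}$, so $\mathrm{Var}_{\bf L}(F_i)\asymp T_i^{-2(\delta-1)}$, which is \emph{not} summable for $\delta\in(1,\tfrac32]$. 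The paper avoids splitting the two sources of randomness: it conditions only on ${\bf L}$, realises $({\bf 1}_{i\prec n})_{i\le n}$ as a single $n$-independent Bernoulli sequence $(B_i)$ via Lemma~\ref{lem:magic} (which also disposes of your ``bookkeeping'' worry cleanly, making $M_n=\sum_{i\le n}(F_iB_i-\mathbb E_{\bf L}[F_iB_i])$ an honest martingale), and uses the total-variance identity $\mathrm{Var}_{\bf L}(F_iB_i)=\tfrac{W_i}{\bar W_i}\mathbb E_{\bf L}[F_i^2]-\big(\tfrac{W_i}{\bar W_i}\big)^2\mathbb E_{\bf L}[F_i]^2$, in which a cancellation between the two terms (computed by integration by parts) is what produces a bound of order $L_i^2 T_{i-1}^{-\delta}$, summable by Lemma~\ref{lem:prel_large}. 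Treating the two variance contributions separately, as you do, forfeits exactly this cancellation.

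Second, even for the piece you do treat, the inequality $L_i^{2-\alpha}\le \max(1,L_i^2)$ is false when $\alpha>2$ and $L_i<1$: then $L_i^{2-\alpha}=L_i^{-(\alpha-2)}$ is unbounded, and nothing in the hypotheses prevents the run-lengths from accumulating near $0$ (e.g.\ exponential $\phi$). So your argument for $\delta\in(1,\tfrac32]$, which needs $\alpha>1/(\delta-1)\ge 2$, does not close. The underlying obstruction is real, not just technical: when $L_iT_i^{\delta-1}$ is small the factor $\mathrm e^{-\gamma\delta L_iT_i^{\delta-1}}$ gives no decay and you are left with $L_i^2$, while when it is large the best you can extract is $O(T_i^{-2(\delta-1)})$; either way the bound is only summable for $\delta>\tfrac32$. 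For $\delta>\tfrac32$ your route for this piece is fine (modulo the missing $F_i$-fluctuation term above), but for $\delta\in(1,\tfrac32]$ you need the paper's cancellation, or else a weaker conclusion of the form $o(n^{3/2-\delta}(\log n)^2)$, which is in fact all that Proposition~\ref{Phi CLT delta > 1} consumes in that range.
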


\begin{proof}
Given {\bf L}, we let $(B_i)_{i\geq 1}$ be a sequence of independent Bernoulli-distributed random variables of respective parameters $W_i/\bar W_i$, independent of $(F_i)_{i\geq 1}$. 
Recall that, by Lemma~\ref{lem:magic}, for all $n\geq 1$, $({\bf 1}_{i\prec n})_{1\leq i\leq n} = (B_i)_{1\leq i\leq n}$.
Also note that, by definition, conditionally on ${\bf L}$,
\[\bigg(M_n :=\sum_{i=1}^n \big(F_i{\bf 1}_{i\prec n}-\mathbb E_{\bf L}[F_i{\bf 1}_{i\prec n}]\big)\bigg)_{n\geq 0}\]
is a martingale.
In the rest of the proof, we show that its quadratic variation satisfies
\begin{equation}\label{eq:large_cond_Petrov}
\langle M \rangle_n= \sum_{i\geq 1} \mathrm{Var}_{\bf L}(F_iB_i)<\infty.
\end{equation}
For all $i\geq 1$, by~\eqref{eq:total_variance},
\begin{equation}\label{eq:large_tot_var}
\mathrm{Var}_{\bf L}(F_i B_i) 
= \frac{W_i}{\bar W_i}\mathbb E_{\bf L}[F_i^2]-\bigg(\frac{W_i}{\bar W_i}\bigg)^2
\mathbb E_{\bf L}[F_i]^2 
\end{equation}
By definition of $F_i$ (see~\eqref{eq:def_F}) in the first equality, and integration by parts in the second,
\ba
W_i\mathbb E_{\bf L}[F_i^2]
&=\int_0^{L_i} x^2 \gamma\delta (T_{i-1}+x)^{\delta-1}\mathrm e^{\gamma(T_{i-1}+x)^\delta}\mathrm dx
= \big[x^2\mathrm e^{\gamma(T_{i-1}+x)^\delta}\big]_0^{L_i}
- \int_0^{L_i} 2x \mathrm e^{\gamma(T_{i-1}+x)^\delta}\mathrm dx\\
&= L_i^2 \bar W_i
-  \int_0^{L_i} 2x \mathrm e^{\gamma(T_i-(L_i-x))^\delta}\mathrm dx,
\ea
because $\bar W_i = \mathrm e^{\gamma T_i^\delta}$, by definition (see~\eqref{eq:def_W}).
We now use the fact that, for all $x\in[0,1]$, $(1-x)^\delta\geq 1-\delta x$, and get
\[W_i\mathbb E_{\bf L}[F_i^2]
\leq L_i^2 \bar W_i
-  \int_0^{L_i} 2x\,\mathrm{exp}\bigg(\gamma T_i^{\delta}\Big(1-\delta\cdot\frac{L_i-x}{T_i}\Big)\bigg)\mathrm dx
= L_i^2 \bar W_i
-  2\bar W_i \int_0^{L_i} x\, \mathrm{exp}\big(-\gamma\delta T_i^{\delta-1}(L_i-x)\big)\mathrm dx.
\]
Using integration by parts again, we get
\ba
&\int_0^{L_i} x\, \mathrm{exp}\big(-\gamma\delta T_i^{\delta-1}(L_i-x)\big)\mathrm dx\\
&\hspace{1cm}= \bigg[\frac{x\,\mathrm{exp}\big(-\gamma\delta T_i^{\delta-1}(L_i-x)\big)}{\gamma\delta T_i^{\delta-1}}\bigg]_0^{L_i}
- \frac1{\gamma\delta T_i^{\delta-1}}\int_0^{L_i}\mathrm{exp}\big(-\gamma\delta T_i^{\delta-1}(L_i-x)\big)\mathrm dx\\
&\hspace{1cm}= \frac{L_i}{\gamma\delta T_i^{\delta-1}} - \frac{1-\mathrm e^{-\gamma\delta L_i T_i^{\delta-1}}}{(\gamma\delta T_i^{\delta-1})^2}.
\ea
This implies
\begin{equation}\label{eq:large_tot_var1}
\frac{W_i\mathbb E_{\bf L}[F_i^2]}{\bar W_i}
\leq L_i^2
-  \frac{2L_i}{\gamma\delta T_i^{\delta -1}}
+ \frac1{(\gamma\delta T_i^{\delta-1})^2}.  
\end{equation}
Similarly,
\ban
W_i\mathbb E_{\bf L}[F_i] 
&= \int_0^{L_i} x \gamma\delta (T_{i-1}+x)^{\delta-1}\mathrm e^{\gamma(T_{i-1}+x)^\delta}\mathrm dx
= \big[x \mathrm e^{\gamma(T_{i-1}+x)^\delta}\big]_0^{L_i}
-  \int_0^{L_i} \mathrm e^{\gamma(T_{i-1}+x)^\delta}\mathrm dx\notag\\
&= L_i \bar W_i -   \int_0^{L_i} \mathrm e^{\gamma(T_{i-1}+x)^\delta}\mathrm dx
= L_i \bar W_i - \frac1\delta \int_{T_{i-1}^\delta}^{T_i^\delta} u^{\frac1\delta-1} \mathrm e^{\gamma u}\mathrm du,\label{eq:WEF}
\ean
where we have changed variables and set $u = (T_{i-1}+x)^\delta$.
We thus get
\begin{equation}\label{eq:WEF/S_lowerbound}
\frac{W_i\mathbb E_{\bf L}[F_i]}{\bar W_i}
\geq L_i - \frac1{\delta \bar W_i T_{i-1}^{\delta-1}}  \int_{T_{i-1}^\delta}^{T_i^\delta} \mathrm e^{\gamma u}\mathrm du
= L_i - \frac1{\gamma\delta T_{i-1}^{\delta-1}}.
\end{equation}
Using~\eqref{eq:large_tot_var} and~\eqref{eq:large_tot_var1}, we thus get
\ba
\mathrm{Var}_{\bf L}(F_i B_i)
&\leq L_i^2
-  \frac{2L_i}{\gamma\delta T_i^{\delta -1}}
+ \frac1{(\gamma\delta T_i^{\delta-1})^2} 
- \bigg( L_i - \frac1{\gamma\delta T_{i-1}^{\delta-1}}\bigg)^2\\
&= \frac{2L_i}{\gamma\delta} \bigg(\frac1{T_{i-1}^{\delta-1}}-\frac1{T_{i}^{\delta-1}}\bigg) +
\frac1{(\gamma\delta T_i^{\delta-1})^2} - \frac1{(\gamma\delta T_{i-1}^{\delta-1})^2}
\leq \frac{2L_i}{\gamma\delta T_{i-1}^{\delta-1}} \bigg(1-\Big(1-\frac{L_i}{T_i}\Big)^{\!\delta-1}\bigg)\\
&\leq \frac{2(\delta-1)L_i^2}{\gamma\delta T_{i-1}^{\delta-1}T_i}
\leq \frac{2(\delta-1)L_i^2}{\gamma\delta T_{i-1}^{\delta}},
\ea
where we have used the fact that, 
for all $x\in[0,1]$, $(1-x)^{\delta-1}\geq 1-(\delta-1) x$ (since $\delta>1$).
We thus get that
\[\sum_{i\geq 1} \mathrm{Var}_{\bf L}(F_i B_i) 
\leq \sum_{i\geq 1} \frac{2(\delta-1)L_i^2}{\gamma\delta T_{i-1}^{\delta}} 
<\infty,\]
by Lemma~\ref{lem:prel_large}.
Thus, \eqref{eq:large_cond_Petrov} holds and implies that $(M_n)_{n\geq 1}$ converges almost surely as $n\uparrow\infty$, which concludes the proof.
\end{proof}

\begin{lemma}\label{lem:large_lem_CLT2}
Almost surely as $n\uparrow\infty$,
\[\sum_{i=1}^n \mathbb E_{\bf L}[F_i{\bf 1}_{i\prec n}] 
= \begin{cases}
T_n + \mathcal O(1) & \text{ if }\delta >2\\[5pt]
\displaystyle T_n - \sum_{i=1}^n \frac{1}{\gamma\delta T_i^{\delta-1}} + \mathcal O(1)
& \text{ if }\delta \in (\nicefrac32, 2]\\
\displaystyle T_n - \sum_{i=1}^n \frac{1}{\gamma\delta T_i^{\delta-1}} + o\big(n^{\nicefrac32-\delta}(\log n)^2\big)
& \text{ if }\delta \in (1,\nicefrac32].
\end{cases}\]
\end{lemma}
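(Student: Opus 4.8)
The plan is to reduce, via Lemma~\ref{lem:magic}, to an estimate of $\sum_{i=1}^n \tfrac{W_i}{\bar W_i}\mathbb E_{\bf L}[F_i]$, and then to evaluate this sum using the integration-by-parts identity~\eqref{eq:WEF} already established in the proof of Lemma~\ref{lem:large_lem_CLT}. Conditionally on ${\bf L}$, the indicator ${\bf 1}_{i\prec n}$ is Bernoulli of parameter $W_i/\bar W_i$ and is independent of $F_i$, so $\mathbb E_{\bf L}[F_i{\bf 1}_{i\prec n}]=\tfrac{W_i}{\bar W_i}\mathbb E_{\bf L}[F_i]$. By~\eqref{eq:WEF}, $W_i\mathbb E_{\bf L}[F_i]=L_i\bar W_i-\int_0^{L_i}\mathrm e^{\gamma(T_{i-1}+x)^\delta}\mathrm dx$; dividing by $\bar W_i=\mathrm e^{\gamma T_i^\delta}$ and substituting $y=L_i-x$ gives
\[
\mathbb E_{\bf L}[F_i{\bf 1}_{i\prec n}]=L_i-I_i,\qquad I_i:=\int_0^{L_i}\mathrm e^{-\gamma(T_i^\delta-(T_i-y)^\delta)}\,\mathrm dy .
\]
Summing, $\sum_{i=1}^n\mathbb E_{\bf L}[F_i{\bf 1}_{i\prec n}]=T_n-\sum_{i=1}^n I_i$, so it remains to prove $\sum_{i=1}^n I_i=\sum_{i=1}^n\frac1{\gamma\delta T_i^{\delta-1}}+\mathcal O(1)$ almost surely.

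The next step is to estimate $I_i$ for $i$ large, using $L_i/T_i\to0$ almost surely (Lemma~\ref{lem:large_BC}, which applies since $\mathbb E[L^2]<\infty$) together with Lemma~\ref{lem:analysis}. Taking $x=y/T_i$ in Lemma~\ref{lem:analysis} gives, for $y\in[0,L_i]$,
\[
\delta y T_i^{\delta-1}-\delta(\delta-1)y^2T_i^{\delta-2}\ \le\ T_i^\delta-(T_i-y)^\delta\ \le\ \delta y T_i^{\delta-1}.
\]
The right inequality yields $I_i\ge\int_0^{L_i}\mathrm e^{-\gamma\delta yT_i^{\delta-1}}\mathrm dy=\frac1{\gamma\delta T_i^{\delta-1}}\big(1-\mathrm e^{-\gamma\delta L_iT_i^{\delta-1}}\big)$. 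For the reverse estimate I would use $\mathrm e^{-a}-\mathrm e^{-b}\le \mathrm e^{-a}(b-a)$ for $0\le a\le b$, with $a=\gamma(T_i^\delta-(T_i-y)^\delta)$ and $b=\gamma\delta yT_i^{\delta-1}$: the left inequality above gives $b-a\le\gamma\delta(\delta-1)y^2T_i^{\delta-2}$ and, once $(\delta-1)L_i/T_i\le\frac12$, also $a\ge\frac12\gamma\delta yT_i^{\delta-1}$, whence
\[
0\le I_i-\int_0^{L_i}\mathrm e^{-\gamma\delta yT_i^{\delta-1}}\mathrm dy\le\gamma\delta(\delta-1)T_i^{\delta-2}\int_0^\infty y^2\mathrm e^{-\frac{\gamma\delta}{2}yT_i^{\delta-1}}\mathrm dy=\frac{16(\delta-1)}{(\gamma\delta)^2}\,T_i^{1-2\delta},
\]
using $\int_0^\infty y^2\mathrm e^{-cy}\mathrm dy=2/c^3$. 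So, almost surely for all $i$ large enough,
\[
I_i=\frac1{\gamma\delta T_i^{\delta-1}}-\frac{\mathrm e^{-\gamma\delta L_iT_i^{\delta-1}}}{\gamma\delta T_i^{\delta-1}}+J_i,\qquad 0\le J_i\le\frac{16(\delta-1)}{(\gamma\delta)^2}T_i^{1-2\delta}.
\]

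To conclude I would sum these three contributions. Since $\delta>1$ we have $2\delta-1>1$ and $T_i\sim i\,\mathbb E[L]$ almost surely, so $\sum_iJ_i<\infty$; by Lemma~\ref{lem:sum_powerexp_Ti}, $\sum_{i=1}^n\mathrm e^{-\gamma\delta L_iT_i^{\delta-1}}/T_i^{\delta-1}=\mathcal O(1)$; and the finitely many small-$i$ terms (where the almost-sure bounds do not yet hold) contribute $\mathcal O(1)$ since each $I_i\le L_i<\infty$. Therefore $\sum_{i=1}^nI_i=\sum_{i=1}^n\frac1{\gamma\delta T_i^{\delta-1}}+\mathcal O(1)$, i.e.
\[
\sum_{i=1}^n\mathbb E_{\bf L}[F_i{\bf 1}_{i\prec n}]=T_n-\sum_{i=1}^n\frac1{\gamma\delta T_i^{\delta-1}}+\mathcal O(1).
\]
For $\delta>2$, Lemma~\ref{lem:sum_power_Ti} gives $\sum_{i=1}^n1/T_i^{\delta-1}=\mathcal O(1)$, so the right-hand side is $T_n+\mathcal O(1)$; for $\delta\in(\nicefrac32,2]$ it is exactly the claimed expansion; and for $\delta\in(1,\nicefrac32]$ it is, a fortiori, $T_n-\sum_{i=1}^n\frac1{\gamma\delta T_i^{\delta-1}}+o(n^{\nicefrac32-\delta}(\log n)^2)$ since $\mathcal O(1)=o(n^{\nicefrac32-\delta}(\log n)^2)$.

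The main obstacle is the control of the error $J_i$: a naive bound of the integrand of $I_i-\int_0^{L_i}\mathrm e^{-\gamma\delta yT_i^{\delta-1}}\mathrm dy$ by $\gamma\delta(\delta-1)y^2T_i^{\delta-2}$ only yields $J_i=\mathcal O(L_i^3T_i^{\delta-2})$, and $\sum_iL_i^3T_i^{\delta-2}$ need not be $o(n^{\nicefrac32-\delta}(\log n)^2)$ when $\delta$ is close to $\nicefrac32$ under the hypothesis $\mathbb E[L^4]<\infty$ alone; retaining the Gaussian-type weight $\mathrm e^{-\gamma\delta yT_i^{\delta-1}/2}$ inside the integral is precisely what upgrades this to the summable bound $\mathcal O(T_i^{1-2\delta})$. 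The same remark applies to the negative term, handled here through Lemma~\ref{lem:sum_powerexp_Ti}: there too the exponential weight must be exploited rather than discarded.
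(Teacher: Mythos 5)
Your reduction to $\sum_{i=1}^n \mathbb E_{\bf L}[F_i{\bf 1}_{i\prec n}] = T_n - \sum_{i=1}^n I_i$ and your sandwiching of $I_i$ via Lemma~\ref{lem:analysis} are exactly the paper's route, and your control of the second-order error $J_i$ (keeping the weight $\mathrm e^{-\gamma\delta yT_i^{\delta-1}/2}$ inside the integral to get the summable bound $\mathcal O(T_i^{1-2\delta})$) is a clean variant of the paper's cruder but equally summable bound $\mathcal O(L_iT_i^{-\delta})$. The gap is in the very last ingredient: you dispose of the term $\sum_{i=1}^n \mathrm e^{-\gamma\delta L_iT_i^{\delta-1}}/T_i^{\delta-1}$ by citing Lemma~\ref{lem:sum_powerexp_Ti}, which asserts that this sum is $\mathcal O(1)$ for every $\delta>1$. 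That assertion cannot be relied on: its proof in the paper is incomplete (the attempted bound $\mathrm e^{-x}\le(\alpha/\mathrm e)^\alpha x^{-\alpha}$ leads to $\sum_i L_i^{-\alpha}T_i^{-(\delta-1)(\alpha+1)}$, which requires negative moments of $L$ that are nowhere assumed), and the statement is in fact false for $\delta\in(1,\nicefrac32]$ in general. For instance with $L\sim\Exp(1)$ --- the paper's own running example --- one has $\mathbb E[\mathrm e^{-\gamma\delta L i^{\delta-1}}]\asymp i^{-(\delta-1)}$, so after replacing $T_i$ by its almost sure order $i\,\mathbb E L$ the $i$-th summand has expectation of order $i^{-2(\delta-1)}$, which is not summable when $\delta\le\nicefrac32$; the summands being non-negative and essentially independent, the sum then diverges almost surely. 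This is precisely why the statement you are proving only claims the error $o(n^{\nicefrac32-\delta}(\log n)^2)$ in that regime rather than $\mathcal O(1)$: your argument ``proves'' something strictly stronger than what is true.

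To close the gap for $\delta\in(1,\nicefrac32]$ you need a genuine estimate showing $\sum_{i=1}^n\mathrm e^{-\gamma\delta L_iT_i^{\delta-1}}/T_i^{\delta-1}=o(n^{\nicefrac32-\delta}(\log n)^2)$; the paper obtains this by comparing with $\sum_{i\le n}\mathrm e^{-\gamma\delta L_i(i\mathbb EL/2)^{\delta-1}}i^{1-\delta}$ and applying a law-of-the-iterated-logarithm type bound for martingales ($M_n=o(\sqrt{\langle M\rangle_n}\,(\log n)^2)$ together with $\langle M\rangle_n=\mathcal O(n^{3-2\delta})$). For $\delta\in(\nicefrac32,2]$ the claim $\mathcal O(1)$ is indeed what is needed, but it too requires an argument (there the relevant quadratic variation $\sum i^{-2(\delta-1)}$ converges) rather than an appeal to Lemma~\ref{lem:sum_powerexp_Ti}; only for $\delta>2$ does the crude bound $\mathrm e^{-\gamma\delta L_iT_i^{\delta-1}}/T_i^{\delta-1}\le T_i^{1-\delta}$ suffice. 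Everything before this final step in your write-up is correct.
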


\begin{proof}
First note that, for all $i\geq 1$, $\mathbb E_{\bf L}[F_i{\bf 1}_{i\prec n}\big]  = W_i\mathbb E_{\bf L}\big[F_i]/\bar W_i$, because, given $\bf L$,  $(F_i)_{1\leq i\leq n}$ is independent of $({\bf 1}_{i\prec n})$,
and because, by Lemma~\ref{lem:magic}, ${\bf 1}_{i\prec n}$ is Bernoulli-distributed with parameter $W_i/\bar W_i$.
By definition of $(F_i)_{i\geq 1}$ (see Definition~\eqref{eq:def_F}) in the first equality, and integration by parts in the second, we get
\ba
\frac{W_i\mathbb E_{\bf L}\big[F_i]}{\bar W_i}
&= \int_0^{L_i} \gamma\delta x(T_{i-1}+x)^{\delta-1} 
\mathrm e^{-\gamma [T_i^\delta -(T_{i-1}+x)^\delta]}\mathrm dx\\
&= L_i -  \int_0^{L_i} \mathrm e^{-\gamma [T_i^\delta -(T_{i-1}+x)^\delta]}\mathrm dx
= L_i -  \int_0^{L_i} \mathrm e^{-\gamma T_i^{\delta} [1 -(1-\nicefrac u{T_i})^\delta]}\mathrm du,
\ea
where we have changed variables and set $u = L_i-x$.
This means that
\[\sum_{i=1}^n\mathbb E_{\bf L}[F_i{\bf 1}_{i\prec n}\big]  
= T_n - \sum_{i=1}^n \int_0^{L_i} \mathrm e^{-\gamma T_i^{\delta} [1 -(1-\nicefrac u{T_i})^\delta]}\mathrm du.
\]
Thus, it only remains to show that
\begin{equation}\label{eq:large_aim_diff}
\sum_{i=1}^n \int_0^{L_i} \mathrm e^{-\gamma T_i^{\delta} [1 -(1-\nicefrac u{T_i})^\delta]}\mathrm du
= \begin{cases}
\mathcal O(1)& \text{ if }\delta >2\\[7pt]
\displaystyle\sum_{i=1}^n \frac1{\gamma\delta T_i^{\delta-1}} + \mathcal O(1)
& \text{ if }\delta \in (\nicefrac32, 2]\\
\displaystyle\sum_{i=1}^n \frac1{\gamma\delta T_i^{\delta-1}} + o\big(n^{\nicefrac32-\delta}(\log n)^2\big)
& \text{ if }\delta \in (1, \nicefrac32],
\end{cases}
\end{equation}
almost surely as $n\uparrow\infty$.
By Lemma~\ref{lem:analysis}, because $u\leq L_i\leq T_i$ almost surely,
\ban
\int_0^{L_i} \mathrm e^{-\gamma\delta u T_i^{\delta-1}}\mathrm du
\leq \int_0^{L_i} \mathrm e^{-\gamma T_i^{\delta} [1 -(1-\nicefrac u{T_i})^\delta]}\mathrm du
&\leq \int_0^{L_i} \mathrm e^{-\gamma\delta u T_i^{\delta-1}(1-(\delta-1)\nicefrac u{T_i})}\mathrm du
\label{eq:large_LB_toimprove}\\
&\leq \int_0^{L_i} \mathrm e^{-\gamma\delta u T_i^{\delta-1}(1-(\delta-1)\nicefrac{L_i}{T_i})}\mathrm du.\notag
\ean
In the last inequality, we have used the fact that $u\leq L_i$ and $\delta>1$.
This implies
\[\frac{1-\mathrm e^{-\gamma\delta L_i T_i^{\delta-1}}}{\gamma\delta T_i^{\delta-1}}
\leq \int_0^{L_i} \mathrm e^{-\gamma T_i^{\delta} [1 -(1-\nicefrac u{T_i})^\delta]}\mathrm du
\leq \frac1{\gamma\delta T_i^{\delta-1}(1-(\delta-1)L_i/T_i)}.\]
Note that, by Lemma~\ref{lem:large_BC}, almost surely, $\frac{L_i}{T_i}<\frac1{2(\delta-1)}$ for all $i$ large enough.
Using the fact that $\frac1{1-u}\leq 1+2u$ for all $u\in[0,\nicefrac12]$, we thus get that, almost surely for all $i$ large enough,
\[\frac{1-\mathrm e^{-\gamma\delta L_i T_i^{\delta-1}}}{\gamma\delta T_i^{\delta-1}}
\leq \int_0^{L_i} \mathrm e^{-\gamma T_i^{\delta} [1 -(1-\nicefrac u{T_i})^\delta]}\mathrm du
\leq \frac{1}{\gamma\delta T_i^{\delta-1}}+\frac{2(\delta-1)L_i}{\gamma\delta T_i^{\delta}}.\]
This implies that
\begin{equation}\label{eq:double_bound}
-\sum_{i=1}^n \frac{2(\delta-1)L_i}{\gamma\delta T_i^{\delta}}
\leq \sum_{i=1}^n \frac{1}{\gamma\delta T_i^{\delta-1}} - \sum_{i=1}^n \int_0^{L_i} \mathrm e^{-\gamma T_i^{\delta} [1 -(1-\nicefrac u{T_i})^\delta]}\mathrm du
\leq \sum_{i=1}^n \frac{\mathrm e^{-\gamma\delta L_i T_i^{\delta-1}}}{\gamma\delta T_i^{\delta-1}}.
\end{equation}
By Lemma~\ref{lem:prel_large}, the left-hand side is $\mathcal O(1)$ as $n\uparrow\infty$.
Furthermore, by Lemma~\ref{lem:prel_large}, $ \sum_{i=1}^n {1}/({\gamma\delta T_i^{\delta-1}})<\infty$ if $\delta>2$.
Thus, to prove~\eqref{eq:large_aim_diff},
it is enough to prove that
\begin{equation}\label{eq:large_aim2}
\sum_{i=1}^n \frac{\mathrm e^{-\gamma\delta L_i T_i^{\delta-1}}}{\gamma\delta T_i^{\delta-1}} 
= \begin{cases}
\mathcal O(1) & \text{ if }\delta>\nicefrac32\\
o\big(n^{\nicefrac32-\delta}(\log n)^2\big) & \text{ if }\delta\in (1,\nicefrac32],
\end{cases}
\end{equation}
almost surely as $n\uparrow\infty$.
If $\delta>2$, this is implied by Lemma~\ref{lem:prel_large}.
For $\delta\in (1, 2]$, 
we first use the strong law of large numbers to write that $T_i\geq i\mathbb EL/2$ almost surely for all $i$ large enough, and thus
\begin{equation}\label{eq:large_towards_aim2}
\sum_{i=1}^n \frac{\mathrm e^{-\gamma\delta L_i T_i^{\delta-1}}}{\gamma\delta T_i^{\delta-1}}
= \mathcal O\bigg(\sum_{i=1}^n \frac{\mathrm e^{-\gamma\delta L_i (i\mathbb EL/2)^{\delta-1}}}
{\gamma\delta (i\mathbb EL/2)^{\delta-1}}\bigg)
= \mathcal O\bigg(\sum_{i=1}^n \frac{\mathrm e^{-\gamma\delta L_i (i\mathbb EL/2)^{\delta-1}}}
{i^{\delta-1}}\bigg),
\end{equation}
almost surely as $n\uparrow\infty$.
First note that $(M_n := \sum_{i=1}^n \mathrm e^{-\gamma\delta L_i (i\mathbb EL/2)^{\delta-1}}{i^{1-\delta}})_{n\geq 0}$ is a martingale whose quadratic variation satisfies
\begin{equation}\label{eq:angle}
\langle M\rangle_n
= \sum_{i=1}^n \frac{\mathbb E[\mathrm e^{-2\gamma\delta L_i (i\mathbb EL/2)^{\delta-1}}]}
{i^{2(\delta-1)}}.
\end{equation}
If $\delta>\nicefrac32$, then $2(\delta -1)>1$ implying that
\[\langle M\rangle_n\leq \sum_{i\geq 1} 
\frac1{i^{2(\delta-1)}}<\infty,
\]
and thus that $(M_n)_{n\geq 0}$ converges almost surely as $n\uparrow\infty$. 
This, together with~\eqref{eq:large_towards_aim2}, concludes the proof of \eqref{eq:large_aim2} in the case when $\delta\in (\nicefrac32, 2]$.
For $\delta\in (1, \nicefrac32]$, we distinguish two cases:
First assume that $\lim_{n\uparrow\infty} \langle M\rangle_n<\infty$. In his case, $M_n$ converges almost surely as $n\uparrow\infty$, which, together with~\eqref{eq:large_towards_aim2}, implies that
\[\sum_{i=1}^n \frac{\mathrm e^{-\gamma\delta L_i T_i^{\delta-1}}}{\gamma\delta T_i^{\delta-1}}
= \mathcal O(1) = o\big(n^{\nicefrac32-\delta}(\log n)^2\big),\]
as claimed in~\eqref{eq:large_aim2}.
If $\lim_{n\uparrow\infty} \langle M\rangle_n = \infty$, then by, e.g.~\cite[Theorem~1.3.15]{Duflo},
almost surely as $n\uparrow\infty$,
\[M_n = o\big(\sqrt{\langle M\rangle_n}(\log n)^2\big).\]
By~\eqref{eq:angle},
\[\langle M\rangle_n\leq \sum_{i=1}^n \frac1{i^{2(\delta-1)}} = \mathcal O(n^{3-2\delta}),\]
which gives that, almost surely as $n\uparrow\infty$, $M_n = o\big(n^{\nicefrac32-\delta}(\log n)^2\big)$. Together with~\eqref{eq:large_towards_aim2}, this implies that
\[\sum_{i=1}^n \frac{\mathrm e^{-\gamma\delta L_i T_i^{\delta-1}}}{\gamma\delta T_i^{\delta-1}}
= o\big(n^{\nicefrac32-\delta}(\log n)^2\big),\]
which concludes the proof of~\eqref{eq:large_aim2}.

Note that~\eqref{eq:large_aim2}, together with~\eqref{eq:double_bound}, implies~\eqref{eq:large_aim_diff}.
To conclude the proof of~\eqref{eq:large_aim_diff}, 
it only remains to prove that, for all $\delta>2$, almost surely as $n\uparrow\infty$,
\begin{equation}\label{eq:large_aim3}
\sum_{i=1}^n \frac{1}{\gamma\delta T_i^{\delta-1}} 
= \mathcal O(1)
\end{equation}
This is true because $T_i \sim i\mathbb EL$ almost surely as $i\uparrow\infty$.
\end{proof}

By Lemmas~\ref{lem:large_lem_CLT} and~\ref{lem:large_lem_CLT}, almost surely as $n\uparrow\infty$,
\ba
\Phi(n) 
&= \sum_{i=1}^n \big(F_i{\bf 1}_{i\prec n}-\mathbb E_{\bf L}[F_i{\bf 1}_{i\prec n}]\big)+ \sum_{i=1}^n \mathbb E_{\bf L}\big[F_i{\bf 1}_{i\prec n}\big]\\
&= \begin{cases}
T_n + \mathcal O(1) & \text{ if }\delta >2\\[5pt]
\displaystyle T_n - \sum_{i=1}^n \frac1{\gamma\delta T_i^{\delta-1}} + \mathcal O(1)
& \text{ if }\delta \in (\nicefrac32, 2]\\[5pt]
\displaystyle T_n - \sum_{i=1}^n \frac1{\gamma\delta T_i^{\delta-1}} + o\big(n^{\nicefrac32-\delta}(\log n)^2\big)
& \text{ if }\delta \in (1, \nicefrac32],
\end{cases}
\ea
as claimed in Proposition \ref{Phi CLT delta > 1}.

\bibliographystyle{plain}   
\bibliography{Citations}

\begin{thebibliography}{10}

\bibitem{Bertoin}
Jean Bertoin, Klaas Van~Harn, and Frederik~Willem Steutel.
\newblock Renewal theory and level passage by subordinators.
\newblock {\em Statistics \& probability letters}, 45(1):65--69, 1999.

\bibitem{BM}
Erion-Stelios Boci and Cecile Mailler.
\newblock Large deviation principle for a stochastic process with random
  reinforced relocations.
\newblock {\em Journal of Statistical Mechanics: Theory and Experiment},
  2023(8):083206, 2023.

\bibitem{BV06}
Konstantin~A. Borovkov and Vladimir~A. Vatutin.
\newblock On the asymptotic behaviour of random recursive trees in random
  environments.
\newblock {\em Advances in applied probability}, 38(4):1047--1070, 2006.

\bibitem{BV05}
Konstantin~A. Borovkov and Vladimir~A. Vatutin.
\newblock Trees with product-form random weights.
\newblock In {\em Fourth Colloquium on Mathematics and Computer Science
  Algorithms, Trees, Combinatorics and Probabilities}, pages 423--426. Discrete
  Mathematics and Theoretical Computer Science, 2006.

\bibitem{BEM17}
Denis Boyer, Martin~R. Evans, and Satya~N. Majumdar.
\newblock Long time scaling behaviour for diffusion with resetting and memory.
\newblock {\em Journal of Statistical Mechanics: Theory and Experiment},
  2017(2):023208, Sep 2017.

\bibitem{BFGM19}
Denis Boyer, Andrea Falc{\'o}n-Cort{\'e}s, Luca Giuggioli, and Satya~N
  Majumdar.
\newblock Anderson-like localization transition of random walks with resetting.
\newblock {\em Journal of Statistical Mechanics: Theory and Experiment},
  2019(5):053204, 2019.

\bibitem{BM24}
Denis Boyer and Satya~N Majumdar.
\newblock Active particle in one dimension subjected to resetting with memory.
\newblock {\em Physical Review E}, 109(5):054105, 2024.

\bibitem{BP16}
Denis Boyer and Inti Pineda.
\newblock Slow {L}\'evy flights.
\newblock {\em Physical Review E}, 93:022103, Feb 2016.

\bibitem{BSS14}
Denis Boyer and Citlali Solis-Salas.
\newblock Random walks with preferential relocations to places visited in the
  past and their application to biology.
\newblock {\em Physical Review Letters}, 112(24), 2014.

\bibitem{Duflo}
Marie Duflo.
\newblock {\em Random Iterative Methods}.
\newblock Springer Verlag, 1997.

\bibitem{EMS}
Martin~R Evans, Satya~N Majumdar, and Gr{\'e}gory Schehr.
\newblock Stochastic resetting and applications.
\newblock {\em Journal of Physics A: Mathematical and Theoretical},
  53(19):193001, 2020.

\bibitem{MUB19}
C{\'e}cile Mailler and Ger{\'o}nimo~Uribe Bravo.
\newblock Random walks with preferential relocations and fading memory: a study
  through random recursive trees.
\newblock {\em Journal of Statistical Mechanics: Theory and Experiment},
  2019(9):093206, 2019.

\bibitem{MM17}
C{\'e}cile Mailler and Jean-Fran{\c{c}}ois Marckert.
\newblock Measure-valued {P}{\'o}lya processes.
\newblock {\em Electronic Journal of Probability}, 22:26, 2017.

\bibitem{PS}
Michel Pain and Delphin S{\'e}nizergues.
\newblock Correction terms for the height of weighted recursive trees.
\newblock {\em The Annals of Applied Probability}, 32(4):3027--3059, 2022.

\bibitem{petrov}
Valentin~V. Petrov.
\newblock {\em Sums of independent random variables}.
\newblock Springer Verlag, 1975.

\bibitem{D19}
Delphin S{\'e}nizergues.
\newblock Geometry of weighted recursive and affine preferential attachment
  trees.
\newblock {\em Electronic Journal of Probability}, 26:1--56, 2021.

\end{thebibliography}

\end{document}